%% file: art157.tex
\let\ORIlabel\label
\let\ORIrefstepcounter\refstepcounter
\AddToHook{package/hyperref/before}{%
\let\label\ORIlabel
\let\refstepcounter\ORIrefstepcounter
}
\documentclass[onefignum,onetabnum]{siamart190516}
\usepackage{amsmath,amssymb,color}
\usepackage[mathlines]{lineno}
\usepackage{graphicx,epsfig,framed}
\usepackage{mathptmx,times} % if new font selection scheme installed
\usepackage{epstopdf,textcase} % flushend.sty not reachable  
\usepackage{cite,url} % stfloats.sty not reachable

\usepackage{soul,multirow,pifont} % colortbl.sty not reachable
\usepackage{color,alltt}          % pbox.sty not reachable
\usepackage{enumerate,siunitx}    % breakurl.sty not reachable
\usepackage{psfrag}
\usepackage{float}
\usepackage{ifthen}
\usepackage[numbers,square,sort&compress]{natbib}
\numberwithin{equation}{section}

\newcommand{\rfb}[1]{\mbox{\rm
   (\ref{#1})}\ifx\undefined\stillediting\else:\fbox{$#1$}\fi}

\newfont{\roma}{cmr10 scaled 1200}
\renewcommand{\cline}{{\mathbb C}}
\newcommand{\fline}  {{\mathbb F}}
\newcommand{\nline}  {{\mathbb N}}
\newcommand{\rline}  {{\mathbb R}}
\newcommand{\tline}  {{\mathbb T}}
\newcommand{\GGG} {{\mathbf G}}

\newcommand{\PPP} {{\mathbf P}}
\newcommand{\SSS} {{\mathbf S}}

\newcommand{\dd}  {{\rm d}\hbox{\hskip 0.5pt}}
\renewcommand{\leq} {\leqslant}
\renewcommand{\geq} {\geqslant}

\newcommand{\Dscr} {{\mathcal D}}

\newcommand{\Hscr} {{\mathcal H}}
\newcommand{\Lscr} {{\mathcal L}}

\newcommand{\Nscr} {{\mathcal N}}
\newcommand{\Sscr} {{\mathcal S}}
\newcommand{\Uscr} {{\mathcal U}}
\newcommand{\Yscr} {{\mathcal Y}}
\newcommand{\Rscr}{\mathfrak{R}}
\newcommand{\mm}    {{\hbox{\hskip 0.5pt}}}
\newcommand{\m}     {{\hbox{\hskip 1pt}}}
\newcommand{\n}     {{\hbox{\hskip -5pt}}}
\newcommand{\nm}    {{\hbox{\hskip -3pt}}}

\newcommand{\bluff} {{\hbox{\raise 15pt \hbox{\mm}}}}
\newcommand{\bigbluff} {{\hbox{\raise 21pt \hbox{\mm}}}}
\newcommand{\bbigbluff} {{\hbox{\raise 35pt \hbox{\mm}}}}
\newcommand{\sbluff}{{\hbox{\raise 10pt \hbox{\mm}}}}
\newcommand{\Om}    {{\Omega}}

\newcommand{\e}      {{\varepsilon}}

\renewcommand{\o}    {{\omega}}
\newcommand{\FORALL} {{\hbox{$\hskip 11mm \forall \;$}}}
\newcommand{\rarrow} {\mathop{\rightarrow}}

\newcommand{\half} {{\frac{1}{2}}}
\newcommand{\LE}[1]    {{L^2([0,\infty);#1)}}
\newcommand{\LEloc}[1] {{L^2_{\rm loc}([0,\infty);#1)}}
\newcommand{\NL}{{\rm NL}}
\renewcommand{\theenumi}{\roman{enumi}}

\makeatletter
\renewcommand{\p@enumii}{}
\makeatother

\newcommand{\ULax}   {\boldsymbol{{\mathfrak{T}}}}
\newcommand{\GothA}  {\boldsymbol{{\mathfrak{A}}}}

\newcommand{\loc}{{\rm loc}}

\newcommand{\bbm}[1]{\left[\begin{matrix} #1 \end{matrix}\right]}
\newcommand{\sbm}[1]{\left[\begin{smallmatrix} #1
\end{smallmatrix}\right]}

\input{ex_shared}

\ifpdf
\hypersetup{
  pdftitle={Second order systems on Hilbert spaces \\
  with nonlinear damping},
  pdfauthor={Shantanu. Singh and George. Weiss}}
\fi

\begin{document}

\maketitle

\begin{abstract}
We investigate a special class of nonlinear infinite dimensional
systems. These systems are obtained by modifying the second order
differential equation that is part of the description of conservative
linear systems ``out of thin air" introduced by M. Tucsnak and
G. Weiss in 2003. The modified differential equation contains a new
nonlinear damping term, that is maximal monotone and possibly
set-valued. We show that this new class of nonlinear infinite
dimensional systems is incrementally scattering passive (hence
well-posed). Our approach uses the theory of maximal monotone
operators and the Crandall-Pazy theorem about nonlinear contraction
semigroups, which we apply to a Lax-Phillips type nonlinear semigroup
that represents the whole system. We illustrate our result on the
$n$-dimensional wave equation.
\end{abstract}

\begin{keywords}
Well-posed linear system, operator semigroup, Lax-Phillips semigroup,
scattering passive system, maximal monotone operator, Minty's theorem,
Rockafellar's theorem, Crandall-Pazy theorem.
\end{keywords}

% REQUIRED
%\begin{AMS}
%  68Q25, 68R10, 68U05
%\end{AMS}

%%%%%%%%%%**********%%%%%%%%%%**********%%%%%%%%%%**********%%%%%%%%%%
\section{Introduction} \label{sec1} % Section 1

The dynamics of many physical processes represented by partial
differential equations can be formulated as abstract second order
differential equations in time. While modeling such processes, we
often encounter nonlinear damping terms, for instance, static friction
for a beam equation (see \cite{Berr,Miletic,ShWeTu:art149,Zhijian}),
damping due to the viscosity of the medium for a wave equation (see
\cite{ConRao,Haraux,Todorova}), nonlinear boundary damping for
Maxwell's equations (see \cite{ELN_decay}) and nonlinear conductivity
due to the presence of a semiconductor in the domain for Maxwell's
equations (see \cite{Pokojovy, ShWeTu:21}). The existence of unique
classical and generalized solutions (to be defined later) of such
nonlinear systems is not guaranteed. This paper is about the
well-posedness of a class of second order systems on real Hilbert
spaces in the presence of nonlinear (possibly multi-valued) damping.

Extensive studies investigating systems described by linear partial
differential equations with a nonlinear damping term, acting in the
interior or on the boundary of the domain, have been conducted, for
instance \cite{AlabAmma,Barbu,Chitour,ELN_decay,Haraux,Lasi_1989,LaTa,
Ramirez:17,Todorova,Tri,Zhijian,Zuazua_1990}. As far as we are aware,
most of the papers on this topic treat the well-posedness of the
associated Cauchy problem (without considering inputs and outputs). In
our previous work \cite{ShWeTu:20,ShWeTu:art149} we have made an
attempt to bridge this gap by considering nonlinear infinite
dimensional systems with both input and output signals, wherein the
nonlinear (possibly set-valued) damping term was assumed to be defined
on the entire state space.  Using the theory of maximal monotone
operators and Lax-Phillips semigroups, we have proved that, under
suitable assumptions, such systems are incrementally scattering
passive (hence well-posed). In this article we consider a special
class of systems with a nonlinear (possibly set-valued) damping term
that is only densely defined. This allows to considerably enlarge the
class of damping operators (as compared to \cite{ShWeTu:20,
ShWeTu:art149}), to include also damping via boundary operators, as
well as distributed damping operators that are not defined on all the
state space, and therefore could not be handled with the results in 
\cite{ShWeTu:art149}, see Section \ref{sec7} for examples.

There are some noteworthy recent articles about nonlinear
perturbations of linear infinite dimensional systems with input and
output signals. For instance, in \cite{Hastir_Hans} the well-posedness
of linear systems with nonlinear feedback is discussed. In
\cite{Ramirez:17}, one-dimensional port-Hamiltonian systems with a
nonlinear controller acting on the boundary are investigated (well
posedness and stability). In \cite{Marx}, global asymptotic stability
of linear infinite dimensional systems subject to nonlinear damping
has been studied via the Lyapunov stability criterion, with the
assumption that the origin of the system is globally asymptotically
stable with a linear damping. In these articles the nonlinear damping
term must obey certain continuity conditions, for instance, in
\cite{Hastir_Hans} and \cite{Marx} the nonlinear damping must be
locally Lipschitz. In \cite{SchmidJacob} the well-posedness of a class
of time-varying semilinear systems with boundary control and
observation is proved. In \cite{SchmidJacob}, a time varying
nonlinearity is considered, which is Lipschitz continuous on bounded
subsets of the state space. In the recent survey \cite{Mironchenko},
input to state stability of nonlinear infinite dimensional systems is
discussed. However, the type of nonlinear perturbation that they
consider and their assumptions are rather different from ours.

The aim of this paper is to investigate the well-posedness of a class
of nonlinear infinite dimensional systems. We consider systems on the
real Hilbert spaces $H$ and $U$, where $H$ and $U$ are identified with
their duals. The state trajectories of the systems that we consider
obey the differential inclusion \vspace{-1mm}
\begin{equation} \label{Inclusion1}
  \ddot{z}(t) \m\in\m -A_0z(t)-\half B_0C_0\dot{z}(t)
  -\Nscr(\dot{z}(t))+ B_0u(t) \quad (t\geq 0),\vspace{-1mm}
\end{equation}
where $A_0:\Dscr(A_0)\rarrow H$ is a self-adjoint, positive and
boundedly invertible operator on $H$. We define $H_\half=\Dscr
(A_0^\half)$, with the inner product $\langle x,z\rangle_\half=\langle
A_0^\half x, A_0^\half z\rangle_H$ and the corresponding norm
$\|.\|_\half$. We denote by $H_{-\half}$ the dual of $H_\half$ with
respect to the pivot space $H$, see \cite[Sect. 3.4]{obs_book}. $A_0$
can be extended to a bounded operator from $H_\half$ to $H_{-\half}$,
and we denote this extension by the same symbol. Here $B_0\in \Lscr(U,
H_{-\half})$, that is, $B_0$ is a linear bounded operator from $U$ to
$H_{-\half}$, and $C_0=B_0^*$, so that $C_0\in\Lscr(H_\half, U)$.

In \rfb{Inclusion1} $\Nscr$ is a nonlinear (possibly multi-valued)
densely defined and maximal monotone operator from $H_\half$ to
$H_{-\half}$ with domain $\Dscr(\Nscr)\subset H_\half$. $\Nscr$ being
{\em densely defined} means that $\Dscr(\Nscr)$ is dense in $H_\half$.
We refer to Sect.~\ref{sec2} for some background on maximal monotone
operators. In \rfb{Inclusion1} and later, by $\dot{z}(t)$ and
$\ddot{z}(t)$ we mean the first and second derivative of $z$ with
respect to the time $t$ \m {\em from the right}.

The output of the system under consideration is given by 
\begin{equation} \label{Output1} \vspace{-1mm}
  y(t) \m=\m -C_0\dot{z}(t) + u(t).
\end{equation}
The state of the system is $x(t)=\sbm{z(t) & \dot{z}(t)}^\top$ and
this evolves in the state space $X=H_\half\times H$, which is a
Hilbert space with the usual inner product for product spaces. This
class of systems is a generalization of the ``conservative linear
systems out of thin air" introduced in \cite{TuWe,WeTu03}, the novelty
being the appearance of the nonlinear damping term $\Nscr$.

Our main result can be formulated briefly as follows: The system
\rfb{Inclusion1}, \rfb{Output1} has a unique generalized solution (to
be defined) for any initial state $x(0)\in X$ and any input function
$u\in L^2([0,\infty);U)$. Moreover, the system is {\it incrementally
scattering passive}, i.e., its generalized solutions satisfy the
following energy balance inequality: for any $\tau\geq 0$,
\begin{equation} \label{energy_bal_M}
  \|x_1(\tau)-x_2(\tau)\|_X^2 + \int_0^\tau \|y_1(t)-y_2(t)\|_U^2 
  \dd t \leq\m \|x_{01}-x_{02}\|_X^2 + \int_0^\tau 
  \|u_1(t)-u_2(t)\|_U^2 \dd t ,
\end{equation} 
where $x_{01}=\sbm{z_{_{01}}\\ w_{_{01}}}$, $x_{02}=\sbm{z_{_{02}}\\
w_{_{02}}}$ are the initial states in \m $X=H_{\half}\times H$, $u_1$,
$u_2$ are the input functions in $\LE U$, and we denote by $x_1=
\sbm{z_1\\ \dot{z}_1}$, $x_2=\sbm{z_2\\ \dot{z}_2}$ the corresponding
state trajectories of the system and by $y_1$, $y_2$ the corresponding
output functions. While this statement may seem intuitive (we know
from \cite{WeTu03} that it holds for $\Nscr=0$, and we expect that
extra damping cannot hurt), actually proving it is not simple, as we 
shall see in Sect.~\ref{sec5}.

We briefly recall some background on monotone operators and nonlinear
contraction semigroups in Sect.~\ref{sec2} and on well-posed linear
systems in Sect.~\ref{sec3}. In Sect.~\ref{sec4} we introduce
well-posed nonlinear systems via Lax-Phillips semigroups. In Sect.
\ref{sec5} we introduce classical and generalized solutions and give
our main well-posedness result and its proof. In Sect.~\ref{sec6} we
prove a version of our main result for systems with nonlinear boundary
damping. In Sect.~\ref{sec7} we apply our main result to the wave
equation on an $n$-dimensional bounded domain with two possible
nonlinear damping terms, one distributed and one on the boundary.

%%%%%%%%%%**********%%%%%%%%%%**********%%%%%%%%%%**********%%%%%%%%%%
\section{Monotone operators and nonlinear contraction semigroups}
\label{sec2} % Section 2

In this section we recall some preliminaries about monotone operators
on real Hilbert spaces and some standard results about the nonlinear
operator semigroups defined using such operators. The following
results are based on \cite{Barbu,Brezis,Brezis2,Browder68,CranPazy, 
Kato_accr,Minty,Rocka}. For modern introductions see
\cite{Bauschke,Show}.

Let $Z$ be a real Hilbert space and let $Z'$ be its dual. A nonlinear
(possibly multi-valued) operator $\Nscr$ defined on $\Dscr(\Nscr)
\subset Z$, whose values are nonempty subsets of $Z'$, is {\em
monotone} if for every $w_1, w_2\in\Dscr(\Nscr)$, the inequality
\vspace{-1mm}
\begin{equation} \label{monotone_op}
   \m\hspace{19mm}\langle g_1-g_2, w_1-w_2\rangle \m\geq\m 0 
   \FORALL g_1\in\Nscr(w_1),\m g_2\in\Nscr(w_2),\vspace{-1mm}
\end{equation}
holds. Such an operator is called {\it maximal monotone} if $\Nscr$
has no proper monotone extension (mapping a subset of $Z$ to subsets
of $Z'$). An important characterization of maximal monotone operators
was given by G. Minty in \cite{Minty}: {\color{blue}

\begin{theorem}[Minty]\label{Minty}
Let $Z$ be a real Hilbert space, and we identify $Z'$ with $Z$. A
monotone operator $\Nscr$ with domain $\Dscr(\Nscr)\subset Z$ is
maximally monotone if and only if \vspace{-2mm}
\begin{equation} \label{maxmono}
   {\rm Ran} \m (I+\Nscr) \m=\m Z,\vspace{-1mm}
\end{equation} 
where by the range ${\rm Ran}(I+\Nscr)$ we mean the union of all the
sets $h+\Nscr(h)$, where $h\in\Dscr(\Nscr)$. Moreover if \rfb{maxmono}
holds, then $(I+\Nscr)^{-1}$ is a single-valued contraction operator.
\end{theorem}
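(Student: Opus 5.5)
We have to prove Minty's theorem in a real Hilbert space $Z$, identified with its dual. The plan is to prove the easy direction (surjectivity implies maximality) and the contraction property by direct monotonicity estimates. The converse (maximality implies surjectivity) is the substantial part.

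\emph{Contraction and single-valuedness.} Suppose $x_k\in h_k+\Nscr(h_k)$ for $k=1,2$, so $x_k=h_k+g_k$ with $g_k\in\Nscr(h_k)$. Then
\[
\langle x_1-x_2,h_1-h_2\rangle=\|h_1-h_2\|^2+\langle g_1-g_2,h_1-h_2\rangle\geq\|h_1-h_2\|^2 .
\]
By Cauchy--Schwarz this gives $\|h_1-h_2\|\leq\|x_1-x_2\|$. Taking $x_1=x_2$ shows that $(I+\Nscr)^{-1}$ is single-valued, and the inequality itself is the contraction property. This argument only uses monotonicity, so it holds on ${\rm Ran}(I+\Nscr)$ in general, and in particular on all of $Z$ when \rfb{maxmono} holds.

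\emph{Surjectivity implies maximality.} Let $\tilde\Nscr\supset\Nscr$ be a monotone extension, and take $g\in\tilde\Nscr(w)$. By \rfb{maxmono} there are $h\in\Dscr(\Nscr)$ and $g'\in\Nscr(h)$ with $w+g=h+g'$. Monotonicity of $\tilde\Nscr$ gives
\[
0\leq\langle g-g',w-h\rangle=-\|w-h\|^2 .
\]
Hence $w=h$ and $g=g'\in\Nscr(w)$, so $\tilde\Nscr=\Nscr$.

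\emph{Maximality implies surjectivity.} This is the main obstacle. Fix $y\in Z$; after translating $\Nscr$ by $y$ we may assume $y=0$. We want $h\in\Dscr(\Nscr)$ with $-h\in\Nscr(h)$. By maximality, it suffices to find $h$ such that
\[
\langle g+h,w-h\rangle\geq0 \qquad \text{for all } (w,g)\in{\rm graph}\,\Nscr ,
\]
because then adjoining the pair $(h,-h)$ to the graph keeps it monotone, and maximality forces $-h\in\Nscr(h)$. For each pair $(w,g)$ in the graph, set
\[
K_{(w,g)}=\{h:\ \|h\|\leq R,\ \langle g+h,w-h\rangle\geq0\}.
\]
Expanding gives $\langle g+h,w-h\rangle=\langle g,w\rangle-\langle g,h\rangle+\langle h,w\rangle-\|h\|^2$. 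Since $-\|h\|^2$ is concave and the remaining terms are affine in $h$, each $K_{(w,g)}$ is a closed, bounded, convex set, hence weakly compact. It therefore suffices to prove the finite intersection property and to check that the intersection is nonempty with a suitable $R$.

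For the finite intersection property, take finitely many pairs $(w_i,g_i)$, $i=1,\dots,n$. We look for $h=\sum_i\lambda_iw_i$ with $\lambda$ in the simplex $\Delta_n$. This is a finite-dimensional problem, which I would solve with a minimax argument of Ky Fan type, or with Brouwer's fixed point theorem. Consider
\[
\phi(\lambda,\mu)=\sum_j\mu_j\langle g_j+h(\lambda),h(\lambda)-w_j\rangle .
\]
It is convex in $\lambda$ (since $\|h\|^2$ is convex in $h$ and $h$ is linear in $\lambda$) and linear in $\mu$. The minimax theorem then gives $\lambda$ with $\phi(\lambda,\mu)\leq\sup_{\mu'}\phi(\mu',\mu')$ for all $\mu$. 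Writing $\mu=\sum_j\mu_jw_j$, the diagonal value satisfies
\[
\phi(\mu,\mu)=\sum_{i,j}\mu_i\mu_j\langle g_j,w_i-w_j\rangle=-\tfrac12\sum_{i,j}\mu_i\mu_j\langle g_i-g_j,w_i-w_j\rangle\leq0
\]
by monotonicity. So there is $h$ in the convex hull of the $w_i$ with $\langle g_j+h,w_j-h\rangle\geq0$ for all $j$.

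The radius $R$ needs care. Fix one pair $(w_0,g_0)$ and always include it among the finitely many pairs. The inequality $\langle g_0+h,w_0-h\rangle\geq0$ gives $\|h\|^2\leq\|h\|(\|g_0\|+\|w_0\|)+\|g_0\|\|w_0\|$, which bounds $\|h\|$ by some $R$ independent of the other pairs. With this $R$, the finite intersection property holds for all the sets $K_{(w,g)}$, and weak compactness gives a common point $h$. That $h$ satisfies the inequality above for every pair in the graph, which completes the proof. Alternatively, if Zorn-type or Yosida-approximation arguments are preferred, this direction can simply be cited from \cite{Minty}. The crux is the finite-dimensional minimax step, which I would settle via Brouwer's fixed point theorem applied on the simplex.
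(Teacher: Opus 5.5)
The paper gives no proof of this theorem. It quotes it from Minty's 1962 paper and uses it as a black box. Both directions are used in the proof of Theorem~\ref{Main}: once for unique solvability of the inclusion for $\tilde w$, and once to upgrade ``dissipative with $I-\GothA^\Nscr$ onto'' to maximal dissipativity. So there is no argument to match. What you have written is a correct, self-contained proof.

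The contraction estimate and the implication ``onto $\Rightarrow$ maximal'' use only monotonicity and are right. For ``maximal $\Rightarrow$ onto'' you are reproducing the Debrunner--Flor extension lemma, and the pieces check out:
\begin{itemize}
\item The reduction to finding $h$ with $\langle g+h,w-h\rangle\geq 0$ on the whole graph is correct.
\item The diagonal identity $\phi(\mu,\mu)=-\frac12\sum_{i,j}\mu_i\mu_j\langle g_i-g_j,w_i-w_j\rangle$ holds. The terms $\langle h,h-w_j\rangle$ cancel because $\sum_j\mu_j=1$.
\item Putting the fixed pair $(w_0,g_0)$ into every finite subfamily is what makes $R$ uniform. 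Weak compactness of the ball then closes the argument.
\end{itemize}

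A few tidy-ups:
\begin{itemize}
\item The existence of $(w_0,g_0)$ uses that a maximal monotone operator has nonempty graph. This holds because the empty operator has proper monotone extensions.
\item In the diagonal computation you mean $h(\mu)=\sum_j\mu_jw_j$, not $\mu$ itself.
\item The closing appeal to Brouwer is superfluous. Since $\phi$ is continuous and convex in $\lambda$ and affine in $\mu$ on the compact simplex, von Neumann's (or Sion's) minimax theorem gives directly
$\min_\lambda\max_\mu\phi(\lambda,\mu)=\max_\mu\min_\lambda\phi(\lambda,\mu)\leq\max_\mu\phi(\mu,\mu)\leq 0$, with the minimum attained.
\end{itemize}

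Compared with the paper's bare citation, your route needs one finite-dimensional topological input (a minimax theorem, ultimately Brouwer or KKM) plus weak compactness. An equivalent classical alternative: the map $(w,g)\mapsto(w+g,\,w-g)$ turns monotone graphs into graphs of nonexpansive maps defined on ${\rm Ran}(I+\Nscr)$. Surjectivity of $I+\Nscr$ for maximal $\Nscr$ then becomes the Kirszbraun--Valentine extension theorem, whose proof has the same ``finite-dimensional step plus compactness'' structure.
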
}

If $\Nscr$ is maximal monotone, then its graph is closed, for every 
$w\in\Dscr(\Nscr)$, the set $\Nscr(w)$ is closed and convex and also
the closure of $\Dscr(\Nscr)$ is a convex set.

Given two maximal monotone operators $\Nscr_1$ and $\Nscr_2$, their
sum $\Nscr_1+\Nscr_2$ is not necessarily maximal monotone as it might
happen that $\Dscr(\Nscr_1)\cap \Dscr(\Nscr_2)$ is too small or even
empty. In this regard, an important theorem to determine the
maximality of the sum of two maximal monotone operators was given by
R.T. Rockafellar, see \cite[Theorem 1]{Rocka}.

{\color{blue}
\begin{theorem}[Rockafellar] \label{Rockafellar} If $\Nscr_1$ and 
$\Nscr_2$ are maximal monotone operators on $Z$ and $\Dscr(\Nscr_1)
=Z$, then $\Nscr_1+\Nscr_2$ is maximal monotone.
\end{theorem}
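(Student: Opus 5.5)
Since $\Nscr_1+\Nscr_2$ is clearly monotone on $\Dscr(\Nscr_2)$, the plan is to use Minty's theorem (Theorem \ref{Minty}): it suffices to show that for every $f\in Z$ there is $w\in\Dscr(\Nscr_2)$ with $f\in w+\Nscr_1(w)+\Nscr_2(w)$. I regularize $\Nscr_2$ by its Yosida approximation $\Nscr_{2,\lambda}=\frac1\lambda\bigl(I-(I+\lambda\Nscr_2)^{-1}\bigr)$, $\lambda>0$. This operator is single-valued, monotone and $\frac1\lambda$-Lipschitz on all of $Z$, hence maximal monotone; the last part of Minty's theorem gives the contraction property of $J_\lambda=(I+\lambda\Nscr_2)^{-1}$. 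The first step is to solve the regularized problem $f\in w_\lambda+\Nscr_1(w_\lambda)+\Nscr_{2,\lambda}(w_\lambda)$. This amounts to a fixed point of $w\mapsto (I+\Nscr_1)^{-1}\bigl(f-\Nscr_{2,\lambda}w\bigr)$, which is a strict contraction only for large $\lambda$. I would therefore use instead the resolvent of $\Nscr_1$ scaled by $\mu=\lambda/(1+\lambda)$, or equivalently the standard fact that a maximal monotone operator plus a Lipschitz monotone everywhere-defined operator is maximal monotone, proved by a continuation argument in the Lipschitz constant. Either route gives $w_\lambda$ and $g_\lambda\in\Nscr_1(w_\lambda)$ with $w_\lambda+g_\lambda+\Nscr_{2,\lambda}w_\lambda=f$.

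Second, I derive a priori bounds. Testing against $w_\lambda-w_0$ for a fixed $w_0\in\Dscr(\Nscr_2)$ and using monotonicity gives boundedness of $w_\lambda$. The key estimate, and the main obstacle, is a bound on $\Nscr_{2,\lambda}w_\lambda$ uniform in $\lambda$. This is exactly where $\Dscr(\Nscr_1)=Z$ must be used. A maximal monotone operator with full domain is locally bounded at every point (this follows from a Baire category argument applied to the closed sets $\{w:\|w\|\le n,\ \Nscr_1(w)\cap \bar B_n\neq\emptyset\}$, which requires some care). I would use it as follows: for $\|h\|\le r$ with $r$ small, the value $\Nscr_1(w_0+h)$ contains elements bounded by $M$. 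Then monotonicity of $\Nscr_1$ between $w_\lambda$ and $w_0+h$, combined with monotonicity of $\Nscr_{2,\lambda}$ and the identity for $g_\lambda$, yields $\langle \Nscr_{2,\lambda}w_\lambda,h\rangle\le C$ for all $\|h\|\le r$. Taking the supremum over $h$ bounds $\|\Nscr_{2,\lambda}w_\lambda\|$.

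Third, I pass to the limit. From the equation, $g_\lambda$ is bounded as well. The differences $\langle (w_\lambda-w_\mu),\dots\rangle$ satisfy the standard Yosida Cauchy estimate $\|w_\lambda-w_\mu\|^2\le C(\lambda+\mu)$, which follows from monotonicity of $\Nscr_1$ and of $\Nscr_2$ applied at $J_\lambda w_\lambda$ together with $\|w_\lambda-J_\lambda w_\lambda\|=\lambda\|\Nscr_{2,\lambda}w_\lambda\|$. Hence $w_\lambda\to w$ strongly, and $J_\lambda w_\lambda\to w$ too. Along a subsequence, $\Nscr_{2,\lambda}w_\lambda\rightharpoonup v$ and $g_\lambda\rightharpoonup g$ weakly. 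Since the graphs of maximal monotone operators are strong–weak closed, we get $v\in\Nscr_2(w)$ (because $\Nscr_{2,\lambda}w_\lambda\in\Nscr_2(J_\lambda w_\lambda)$) and $g\in\Nscr_1(w)$. Thus $f=w+g+v$, and Minty's theorem concludes the proof.
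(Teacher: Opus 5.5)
Your argument is correct, but it does not follow the paper, which gives no proof of this statement. The paper simply quotes it as a special case of Theorem~1 of Rockafellar \cite{Rocka}, which holds in reflexive Banach spaces under the weaker hypothesis $\mathrm{int}\,\Dscr(\Nscr_1)\cap\Dscr(\Nscr_2)\neq\emptyset$. What you give is essentially the classical Hilbert-space proof by Yosida regularization (as in Br\'ezis' monograph), and the steps check out:
\begin{itemize}
\item your rescaled map $w\mapsto(I+\mu\Nscr_1)^{-1}\bigl(\mu f+\frac{1}{1+\lambda}J_\lambda w\bigr)$, with $\mu=\lambda/(1+\lambda)$, is a strict contraction whose fixed point solves the regularized problem;
\item testing against $w_\lambda-w_0-h$ with a bounded element of $\Nscr_1(w_0+h)$ gives the uniform bound on $\Nscr_{2,\lambda}w_\lambda$;
\item $\|w_\lambda-w_{\lambda'}\|^2\leq C(\lambda+\lambda')$ follows from $w_\lambda=J_\lambda w_\lambda+\lambda\Nscr_{2,\lambda}w_\lambda$ and monotonicity.
\end{itemize}
Since you only use $w_0\in\mathrm{int}\,\Dscr(\Nscr_1)\cap\Dscr(\Nscr_2)$, you in fact get the Hilbert-space case of Rockafellar's general theorem, at the cost of relying on the local boundedness theorem.

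On that point, Baire applied to the sets you list only yields \emph{some} ball on which $\Nscr_1$ is bounded, not necessarily one around your $w_0$. The quickest fix is the uniform boundedness principle. If $x_n\to w_0$ and $y_n\in\Nscr_1(x_n)$, monotonicity against elements of $\Nscr_1(w_0\pm h)$ shows that $y_n/(1+\|y_n\|\,\|x_n-w_0\|)$ is weakly bounded. Hence $\|y_n\|\leq K(1+\|y_n\|\,\|x_n-w_0\|)$, so $\|y_n\|\leq 2K$ for large $n$.

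Finally, in both places where the paper uses the theorem, the everywhere defined summand is a bounded positive operator: $A_0^{-1}$, respectively $\half A_0^{-\half}B_0C_0A_0^{-\half}$. So the lemma from your first step already suffices there, and the limit $\lambda\to 0$ is unnecessary. Concretely, let $T$ be maximal monotone and $S$ monotone and $L$-Lipschitz on $Z$. For $\mu L<1$ the map $w\mapsto(I+\mu T)^{-1}(f-\mu Sw)$ is a contraction, so ${\rm Ran}(I+\mu(T+S))=Z$. Minty's theorem applied to $\mu(T+S)$ then gives maximality, with no continuation argument needed.
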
} 

The above theorem is only a particular case of Theorem 1 in
\cite{Rocka}.

Maximal monotonicity is an essential property in the theory of
strongly continuous semigroups of nonlinear contraction operators. The
study of existence and uniqueness of solutions of many evolution
equations (similar to \rfb{Inclusion1}) depends on the generation
theory of such semigroups. We now recall very briefly some facts about
strongly continuous semigroups of nonlinear operators. For the basics
about such semigroups we refer to
\cite{Barbu,Brezis74,CranPazy,Kato_accr,Show,Tanabe}.

Let $Z$ be a real Hilbert space, and we identify $Z'=Z$. A {\em
strongly continuous semigroup of nonlinear operators} $\ULax$ acting
on $Z$ is defined exactly as in the linear case. If $\ULax$ is such a
semigroup, then define the (single-valued) operator \vspace{-2mm}
\begin{equation} \label{operator_minimal}
   \GothA^0 z \m=\m \lim_{t\rarrow 0,\m t>0} \frac{1}{t} \left[ 
   \ULax_t z - z\right], \m\quad \Dscr(\GothA^0) \m=\m
   \left\{ z\in Z\ |\ \mbox{the limit exists} \right\} \m.
\end{equation} 
Following \cite{CranPazy}, $\GothA^0$ is called the {\em (strong)
generator} of $\ULax$. The semigroup $\ULax$ is called {\it
contractive} (or a {\it contraction semigroup}) if \vspace{-1mm}
\[ \|\ULax_t z_1 - \ULax_t z_2\| \m\leq\m \|z_1-z_2\| 
   \n\n\FORALL z_1,z_2\in Z, \m\m t\geq 0.\] 
A set-valued operator $\GothA:\Dscr(\GothA)\rarrow Z$ (with
$\Dscr(\GothA)\subset Z$) is called {\em (maximal) dissipative} if
$-\GothA$ is (maximal) monotone. The following theorem is extracted
from Theorems 1.3, 1.4, 1.5 and A1 as well as Corollary 3.1 in
\cite{CranPazy}, and is formulated to fit the nonlinear operators
discussed in this article. Recall that in this article, $\dot{z}$
denotes the right derivative of $z$.

{\color{blue}
\begin{theorem} \label{Putin}
Assume that $\ULax$ is contractive. Then its generator, $\GothA^0$
from \rfb{operator_minimal} is densely defined and dissipative. This
operator $\GothA^0$ has a unique maximal dissipative extension
$\GothA$ (which might be set-valued) with the same domain
$\Dscr(\GothA)=\Dscr(\GothA^0)$. If $z_0\in \Dscr(\GothA)$, then
$\GothA^0z_0$ is the unique element of smallest norm in the closed and
convex set $\GothA\m z_0$.

Let $z_0\in\Dscr(\GothA)$. The function $z:[0,\infty)\rarrow Z$
defined by $z(t)=\ULax_tz_0$ is Lipschitz continuous and right
differentiable at every $t\geq 0$. Moreover, for every $t\geq 0$, it
holds that $z(t)\in\Dscr(\GothA)$, \vspace{-1.5mm}
\begin{equation} \label{d^+z/dt}
  \dot{z}(t) \m=\m \GothA^0 z(t),\vspace{-1mm}
\end{equation}
and $\GothA^0 z$ is right continuous at $t$. The function
$\|\GothA^0z\|$ is nonincreasing. If $t>0$ is such that $\|\GothA^0 z
\|$ is continuous at $t$, then $z$ is differentiable at $t$.
\end{theorem}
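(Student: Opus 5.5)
Since this theorem is assembled from \cite{CranPazy}, the plan is to reconstruct it in three stages: dissipativity of $\GothA^0$, construction and uniqueness of the maximal extension $\GothA$, and regularity of orbits. Throughout, the Hilbert structure of $Z$ replaces the general Banach-space machinery.

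\emph{Stage 1: dissipativity.} For $z_1,z_2\in\Dscr(\GothA^0)$, contractivity makes the function $\varphi(t)=\|\ULax_tz_1-\ULax_tz_2\|^2$ nonincreasing with $\varphi(t)\leq\varphi(0)$. Since $\ULax_tz_i=z_i+t\GothA^0z_i+o(t)$, the right derivative at $t=0$ is $2\langle\GothA^0z_1-\GothA^0z_2,z_1-z_2\rangle$, and it must be $\leq 0$. So $-\GothA^0$ is monotone.

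\emph{Stage 2: density of the domain and the maximal extension.} Let $\widehat D=\{z\in Z \mid \liminf_{h\downarrow0}\|\ULax_hz-z\|/h<\infty\}$. I will first show that $z\in\widehat D$ makes $t\mapsto\ULax_tz$ Lipschitz, using $\|\ULax_{t+h}z-\ULax_tz\|\leq\|\ULax_hz-z\|$ and the semigroup property. A Lipschitz function into a Hilbert space is differentiable a.e. (Radon--Nikodym property), so $\ULax_tz\in\Dscr(\GothA^0)$ for a.e.\ $t$, and $\ULax_tz\to z$. This gives $\overline{\Dscr(\GothA^0)}\supset\widehat D$.

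Proving that $\widehat D$ is dense in $Z$ is the step I expect to be the main obstacle, since averaging tricks from the linear theory are unavailable. I would follow Crandall--Pazy:
\begin{enumerate}
\item use Zorn's lemma to extend $\GothA^0$ to a maximal dissipative $\widetilde\GothA$;
\item generate, via Kato--K\={o}mura and Minty's Theorem \ref{Minty}, a contraction semigroup $\mathbf S$ on $\overline{\Dscr(\widetilde\GothA)}$ whose orbits are integral solutions in B\'enilan's sense, i.e.\ they satisfy $\frac{d}{dt}\frac12\|x(t)-v\|^2\leq\langle w,x(t)-v\rangle$ for $(v,w)$ in the graph;
\item show that orbits of $\ULax$ through $\widehat D$ also satisfy this inequality, first for graph points of $\GothA^0$ and then by maximality for those of $\widetilde\GothA$;
\item conclude from uniqueness of integral solutions that $\ULax=\mathbf S$ on the relevant closed set, so that density reduces to $\overline{\Dscr(\widetilde\GothA)}=Z$, which has to be checked against the fact that $\ULax$ is defined on all of $Z$.
\end{enumerate}

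Once the two semigroups are identified, the theory of semigroups generated by maximal monotone operators (Brezis) says that $\Dscr(\widetilde\GothA)$ is exactly the set of points with Lipschitz orbits. There the right derivative exists and equals the minimal-norm element of $\widetilde\GothA z$. Hence $\Dscr(\GothA^0)=\Dscr(\widetilde\GothA)$ and $\GothA^0$ is the minimal section. Uniqueness of $\GothA$ then follows because a maximal dissipative operator is determined by its minimal section: its graph is recovered as the set of $(v,w)$ that are dissipatively compatible with the graph of $\GothA^0$, using closedness and convexity of $\GothA z$.

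\emph{Stage 3: regularity.} For $z_0\in\Dscr(\GothA)$, the estimate $\|z(t+h)-z(t)\|\leq\|\ULax_hz_0-z_0\|$ gives Lipschitz continuity with constant $\|\GothA^0z_0\|$. Applying the same estimate at time $t$ and dividing by $h$ shows $\|\GothA^0z(t)\|$ is nonincreasing. Right differentiability with $\dot z(t)=\GothA^0z(t)$ is the identification from Stage 2 applied at $z(t)$.

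For right continuity of $\GothA^0z$, take $t_n\downarrow t$. The values $\GothA^0z(t_n)$ are bounded, so a subsequence converges weakly to some $w$. Closedness of the graph of $\GothA$ in the strong$\times$weak topology gives $w\in\GothA z(t)$. Also $\|w\|\leq\liminf\|\GothA^0z(t_n)\|\leq\|\GothA^0z(t)\|$, so minimality forces $w=\GothA^0z(t)$. The norms converge as well, so the convergence is strong.

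Finally, at a continuity point $t$ of $\|\GothA^0z\|$, the same argument with $t_n\uparrow t$ shows the left difference quotients $\frac1h\int_{t-h}^t\GothA^0z(s)\,\dd s$ converge strongly to $\GothA^0z(t)$, which gives differentiability at $t$.
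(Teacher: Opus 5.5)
The paper gives no proof of this theorem; it imports it from \cite{CranPazy}. Your Stages 1 and 3 are sound. The strong$\times$weak closedness and minimal-norm arguments for right continuity of $\GothA^0 z$, and for differentiability at continuity points of $\|\GothA^0 z\|$, are right. The Radon--Nikodym argument giving $\widehat D\subset\overline{\Dscr(\GothA^0)}$ is also correct.

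\textbf{The density step.} This is a genuine gap: you flag it as the main obstacle but do not prove it, and the route you sketch cannot prove it. Steps (1)--(4) yield at most that $\ULax$ coincides with the semigroup $\mathbf S$ of a Zorn extension $\widetilde\GothA\supset\GothA^0$ on $\overline{\widehat D}$. You have not even shown that this set is nonempty.

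Nothing in the construction forces $\overline{\Dscr(\widetilde\GothA)}=Z$. Every step you list would go through verbatim if $\Dscr(\GothA^0)$ were a single fixed point $z^*$ and $\widetilde\GothA$ were any maximal dissipative operator with $0\in\widetilde\GothA z^*$. The fact that $\ULax$ is defined on all of $Z$ tells you nothing about $\widetilde\GothA$ away from $\overline{\widehat D}$.

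Moreover, the implication you need runs the wrong way. Density of $\widehat D$ implies $\overline{\Dscr(\widetilde\GothA)}=Z$, but not conversely: knowing $\overline{\Dscr(\widetilde\GothA)}=Z$ still gives $\ULax=\mathbf S$ only on $\overline{\widehat D}$.

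For the same reason, your identification $\Dscr(\GothA^0)=\Dscr(\widetilde\GothA)$ is unsupported, since it requires $\ULax=\mathbf S$ on all of $\Dscr(\widetilde\GothA)$. So are the claims that $\GothA$ has the same domain as $\GothA^0$ and is unique. What is missing is an essentially Hilbertian argument that produces, arbitrarily close to any $x\in Z$, points whose $\ULax$-orbits are Lipschitz. That is exactly the nontrivial content the paper takes from \cite{CranPazy}.

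\textbf{The uniqueness argument.} This has a separate flaw. The set of pairs $(v,w)$ that are dissipatively compatible with the graph of $\GothA^0$ is in general strictly larger than $\GothA$. Take $K\neq Z$ nonempty, closed and convex, and $\GothA=-\partial I_K$. Then $\GothA^0=0$ on $K$. For $v\notin K$, the pair $(v,P_Kv-v)$ satisfies $\langle P_Kv-v,\,v-\xi\rangle\leq-\|v-P_Kv\|^2<0$ for every $\xi\in K$, yet $v\notin\Dscr(\GothA)$. Closedness and convexity of the sets $\GothA z$ do not repair this.

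When the domain is dense the characterization is true, but it needs a dynamical argument. First, any maximal dissipative $\GothA_1\supset\GothA^0$ generates a semigroup that agrees with $\ULax$ on $\Dscr(\GothA^0)$, by uniqueness of strong solutions, and hence on $Z$. Second, a pair satisfying the B\'enilan inequality along all orbits of $\ULax$ must belong to $\GothA$. You have not supplied this second lemma.
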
}

The operator $\GothA^0$ being densely defined means that $\Dscr
(\GothA^0)$ is dense in $Z$. In the linear case, $\GothA^0=\GothA$.
The following theorem due to M.~Crandall and A.~Pazy 
\cite[Theorem A1]{CranPazy} can be considered as a generalization of
the Lumer-Phillips theorem from linear semigroup theory (see for 
instance \cite{Engel_Nagel} or \cite{obs_book}).

{\color{blue}
\begin{theorem}[Crandall-Pazy] \label{Crandall-Pazy}
Let $\GothA$ be a maximal dissipative set-valued operator on $Z$ with
domain $\Dscr(\GothA)$ dense in $Z$. For each $z_0\in\Dscr(\GothA)$
let $\GothA^0z_0$ denote the element of smallest norm in $\GothA\m
z_0$. Then there is a unique strongly continuous semigroup of
nonlinear operators $\ULax$ acting on $Z$ such that $\GothA^0$ is the
generator of $\ULax$. Moreover, $\ULax$ is contractive.
\end{theorem}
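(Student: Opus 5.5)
The plan is the classical Crandall--Liggett / Kōmura construction via the implicit Euler scheme, set up exactly as in \cite{CranPazy}. Fix $\lambda>0$. By maximal dissipativity, $-\GothA$ is maximal monotone, and so is $-\lambda\GothA$. Theorem \ref{Minty} therefore gives that the resolvent $J_\lambda=(I-\lambda\GothA)^{-1}$ is a single-valued contraction defined on all of $Z$. I then set $\ULax_t z_0=\lim_{n\to\infty} J_{t/n}^n z_0$. The first task is to show that this limit exists, uniformly for $t$ in compact intervals, whenever $z_0\in\Dscr(\GothA)$. For this I use the Crandall--Liggett estimate
\[
\|J_\mu^m z_0 - J_\lambda^n z_0\|\leq C(m,n,\lambda,\mu)\,|\GothA z_0|, \qquad |\GothA z_0|=\|\GothA^0 z_0\| .
\]
It is obtained from the resolvent identity $J_\lambda z=J_\mu\big(\tfrac{\mu}{\lambda}z+\tfrac{\lambda-\mu}{\lambda}J_\lambda z\big)$, the contractivity of $J_\lambda$, and the bound $\|J_\lambda z_0-z_0\|\leq\lambda|\GothA z_0|$. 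The combinatorial double-index induction in this estimate is the main obstacle. Since $Z$ is a Hilbert space, I could instead take the simpler route through the Yosida approximations $\GothA_\lambda=\lambda^{-1}(J_\lambda-I)$. These are Lipschitz and dissipative, so each generates a contraction semigroup by the Picard ODE theorem. Monotonicity of $-\GothA$ then gives the Cauchy estimate $\|u_\lambda(t)-u_\mu(t)\|^2\leq 2(\lambda+\mu)t\,\|\GothA^0z_0\|^2$. I will commit to this Hilbert-space route.

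Next come contractivity and the semigroup property. For the approximating semigroups $e^{t\GothA_\lambda}$, contractivity follows by differentiating $\|u_\lambda(t)-v_\lambda(t)\|^2$ and using dissipativity of $\GothA_\lambda$. The semigroup property holds for them as well, and both properties pass to the limit. Because $\Dscr(\GothA)$ is dense and the limit maps are contractions, $\ULax_t$ extends uniquely to all of $Z$. This extension is a contraction semigroup, and it is strongly continuous: continuity in $t$ holds on $\Dscr(\GothA)$, since the Lipschitz bound $\|u(t)-u(s)\|\leq|t-s|\,\|\GothA^0z_0\|$ survives the limit, and it then extends to $Z$ by density and contractivity.

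To identify the generator as $\GothA^0$, I first show that for $z_0\in\Dscr(\GothA)$ the limit $u$ is a strong solution of $\dot u\in\GothA u$. It is Lipschitz, hence differentiable almost everywhere, and passing to the limit in $\langle \dot u_\lambda-\GothA_\lambda u_\lambda... \rangle$ inequalities uses the closedness of the graph of the maximal monotone $-\GothA$ (demiclosedness under strong--weak convergence). The right derivative at $0$ then equals the minimal-norm element $\GothA^0z_0$. To see this, integrate the inequality $\tfrac12\frac{d}{dt}\|u(t)-z_0\|^2\leq\langle g,u(t)-z_0\rangle$, valid for any $g\in\GothA z_0$. This yields $\limsup_{t\downarrow 0}\|(u(t)-z_0)/t\|\leq\|g\|$ for every such $g$. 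Every weak cluster point of the difference quotients lies in $\GothA z_0$, by closedness, so the norm bound together with weak convergence gives strong convergence to $\GothA^0 z_0$. Conversely, if the limit defining the generator exists at some $z_0$, then monotonicity tested against every $(w,h)$ in the graph of $\GothA$, followed by maximality, forces $z_0\in\Dscr(\GothA)$. Hence the generator is exactly $\GothA^0$.

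For uniqueness, suppose $\mathbf S$ is another contraction semigroup whose generator is $\GothA^0$. Its trajectories from $z_0\in\Dscr(\GothA)$ are Lipschitz and right differentiable with $\dot v=\GothA^0 v\in\GothA v$, by Theorem \ref{Putin}. Dissipativity then gives $\frac{d^+}{dt}\|u-v\|^2\leq0$, so $u=v$. Density of $\Dscr(\GothA)$ and contractivity extend this to all of $Z$.
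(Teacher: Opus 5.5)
The paper does not prove this theorem. It quotes it from \cite[Theorem A1]{CranPazy} and uses it only to produce the contraction semigroup $\ULax^\Nscr$ in the proof of Theorem \ref{Main}. Your proposal is therefore a genuinely different, self-contained route: the K\={o}mura--Brezis argument via the Yosida approximations $\GothA_\lambda=\lambda^{-1}(J_\lambda-I)$, which relies on the Hilbert structure (the inner-product Cauchy estimate, a.e. differentiability of Lipschitz functions, strong--weak closedness of maximal monotone graphs). Compared with the citation, it gives transparency and extra information: the rate $\|u_\lambda(t)-u(t)\|=O(\sqrt{\lambda t})$, and the fact that trajectories from $\Dscr(\GothA)$ are strong solutions of $\dot u\in\GothA u$. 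The Crandall--Liggett route you mention and then discard is the one that would extend beyond Hilbert spaces. The outline is sound once you make explicit the a priori bound $\|\GothA_\lambda u_\lambda(t)\|\le\|\GothA_\lambda z_0\|\le\|\GothA^0 z_0\|$. This comes from $t\mapsto\|\dot u_\lambda(t)\|$ being nonincreasing, plus $\|J_\lambda z_0-z_0\|\le\lambda\|\GothA^0z_0\|$, and both your Cauchy estimate and your Lipschitz bound depend on it.

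Two steps need repair.

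\textbf{Cluster points of the difference quotients.} The difference quotients $(u(t)-z_0)/t$ are not values of $\GothA$, so closedness of the graph alone does not show that their weak cluster points lie in $\GothA z_0$. What works is the integrated inequality
\[ \tfrac12\|u(t)-w\|^2-\tfrac12\|z_0-w\|^2\le\int_0^t\langle h,u(s)-w\rangle\dd s \qquad \forall\, (w,h)\in{\rm graph}(\GothA), \]
obtained by integrating $\langle\dot u-h,u-w\rangle\le 0$ and extended to every $z_0\in Z$ by density and contractivity. Divide by $t$ and let $t\downarrow 0$. This gives $\langle v-h,z_0-w\rangle\le 0$ for every weak cluster point $v$ (or for the limit, if it exists), and maximality then yields $v\in\GothA z_0$. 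The same computation is what makes your converse step work (limit exists at $z_0$ implies $z_0\in\Dscr(\GothA)$). Testing only through contractivity against trajectories started at $w$ gives you only the pairs $(w,\GothA^0 w)$, not the whole graph.

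\textbf{Uniqueness.} Your argument assumes the competitor $\mathbf S$ is contractive, because that is what Theorem \ref{Putin} needs in order to guarantee $\mathbf S_t z_0\in\Dscr(\GothA)$ and $\dot v=\GothA^0 v$ for all $t$. As phrased, the statement asserts uniqueness among all strongly continuous semigroups with generator $\GothA^0$. For a non-Lipschitz $\mathbf S$ nothing guarantees that $\Dscr(\GothA)$ is invariant. Then $\frac{d^+}{dt}\|u-v\|^2\le 0$ is known only at times when $\mathbf S_t z_0\in\Dscr(\GothA)$, and the remaining times could a priori form an uncountable set on which a continuous function can still increase. You should either state explicitly that uniqueness is meant within contraction semigroups, or give a separate argument for the general case. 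The contractive reading is the standard form of the result and is all the paper needs, since $\ULax^\Nscr$ is contractive anyway.
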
}

\begin{remark}
Let $\GothA:\Dscr(\GothA)\rarrow Z$ be a set-valued operator, with
$\Dscr(\GothA)$ dense in $Z$. With an initial condition $z(0)=z_0\in
\Dscr(\GothA)$, we define an {\it abstract nonlinear Cauchy problem}
as follows: find a continuous and right differentiable $z:[0,\infty)
\rarrow Z$ such that $z(t)\in\Dscr(\GothA)$ for all $t\geq 0$ and
\vspace{-3mm}
\[ \dot{z}(t) \m\in\m \GothA\m z(t)\quad \FORALL t\geq 0, 
   \quad z(0)=z_0. \]
From Theorems \ref{Putin} and \ref{Crandall-Pazy}, we obtain that this
abstract Cauchy problem has a unique solution if $\GothA$ is a maximal
dissipative operator. Moreover, $z(t)=\ULax_t z_0$, where $\ULax$ is
the semigroup of contractions generated by $\GothA^0$.
\end{remark}

Denote by $C^1_r(J;Z)$ the space of continuous $Z$-valued functions
$f$ on the interval $J$ such that for every $t\in J$ where $t\neq {\rm
sup}\m\m J$, $f$ is right differentiable and its right derivative is
right continuous and bounded on $J$. Such functions are the integral
of their right derivatives, meaning that for any $a,b\in J$, $a<b$,
$f(b)-f(a)=\int^b_a\dot{f}(t)\dd t$, see for instance \cite{wiki}.

{\color{blue}
\begin{proposition} \label{Lebesgue} 
If $f\in C^1_r(J;Z)$ then $f$ is differentiable at almost every 
$t\in J$.
\end{proposition}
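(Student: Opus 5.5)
The idea is to show that $f$ is Lipschitz on $J$ and that, at almost every point, its right derivative is continuous from the left as well, so that $f$ is genuinely differentiable there.

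\textbf{Step 1 (Lipschitz bound).} Let $M=\sup_{t\in J}\|\dot f(t)\|<\infty$. Using the integral representation recalled just above, $f(b)-f(a)=\int_a^b \dot f(t)\dd t$, we get $\|f(b)-f(a)\|\leq M|b-a|$. So $f$ is Lipschitz on $J$.

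\textbf{Step 2 (continuity of $\dot f$ almost everywhere).} A right-continuous function need not be continuous almost everywhere when it is vector-valued, so we argue through the integral instead. The function $\dot f$ is right continuous, hence measurable: it is the pointwise limit of step functions sampled from the right. It is also bounded, so it is Bochner integrable on compact subintervals, provided it is separably valued. Separability holds because the values of $\dot f$ lie in the closure of the countable set $\{\dot f(q) : q\in\mathbb{Q}\cap J\}$, by right continuity. By the Lebesgue differentiation theorem for Bochner integrals, almost every $t$ is a Lebesgue point of $\dot f$, that is,
\[ \frac1h\int_{t-h}^{t}\|\dot f(s)-\dot f(t)\|\dd s\rarrow 0 \quad (h\rarrow 0^+). \]

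\textbf{Step 3 (two-sided derivative).} At such a Lebesgue point $t$ in the interior of $J$, the left difference quotient satisfies
\[ \Bigl\|\frac{f(t)-f(t-h)}{h}-\dot f(t)\Bigr\| \leq \frac1h\int_{t-h}^t\|\dot f(s)-\dot f(t)\|\dd s \rarrow 0 . \]
The right difference quotient tends to $\dot f(t)$ by definition of $\dot f$. Hence $f$ is differentiable at $t$, and therefore at almost every $t\in J$.

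\textbf{Main obstacle.} The delicate point is justifying the Bochner-integral framework in Step 2: measurability and separable-valuedness of $\dot f$, and the validity of the integral formula from the cited source in the Bochner sense. An alternative route avoids this. Since $Z$ is a Hilbert space, it has the Radon–Nikodym property, so the Lipschitz function $f$ is differentiable almost everywhere directly. This gives the result immediately from Step 1, and I would mention it as a backup argument.
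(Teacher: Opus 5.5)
Your proof is correct and is essentially the paper's argument: the bounded, right-continuous $\dot f$ is locally Bochner integrable, almost every $t$ is a Lebesgue point, and at such a point the two-sided difference quotient of $f$ converges to $\dot f(t)$ via $f(b)-f(a)=\int_a^b\dot f(\sigma)\dd\sigma$. You also supply the measurability and separability details and the left-quotient estimate, which the paper leaves implicit; your Step~1 is only needed for the Radon--Nikod\'ym backup.

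One aside is wrong, though nothing in your proof depends on it. A function into a metric space that is right-continuous at every point has at most countably many discontinuities (a classical elementary fact). Hence $\dot f$ is continuous off a countable set, and $f$ is in fact differentiable at all interior points of $J$ except countably many.
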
}

{\em Proof.} If $f\in C^1_r(J;Z)$, then its right derivative at $t\in
J$ denoted by $\dot{f}(t)$ is right continuous and bounded.
Consequently, $\dot{f}(t)$ is locally Bochner integrable on $J$ which
in turn implies that almost every $t\in J$ is a Lebesgue point (see
\cite[p. 49]{Diestel}). Therefore, at almost every $t\in J$,
\vspace{-1.5mm}
\[ \dot{f}(t) \m=\m \lim_{\e\rarrow 0} \frac{1}{\e} \int^{t+\e}_\e 
   \dot{f}(\sigma)\dd \sigma \m=\m \lim_{\e\rarrow 0} \frac{f(t+\e)
   -f(t)}{\e},\vspace{-2mm} \]
which means that $f$ is differentiable at $t$. $\m\ \square$ 

\smallskip

A single-valued operator $\Nscr_0$ from a real Hilbert space $Z$ to
its dual $Z'$ is called {\it hemi-continuous} if it has the following
property: whenever $u\in\Dscr(\Nscr_0)\subseteq Z$, $(t_n)$ is a
sequence in $(0,\infty)$, $t_n\rarrow 0$, $v\in Z$ and $u+t_n v\in
\Dscr(\Nscr_0)$, then $\Nscr_0(u+t_n v)\rarrow\Nscr_0(u)$ in the
weak topology of $Z'$. For details see \cite{CranPazy} or
\cite[Sect.~6.3]{Tanabe}.

The following proposition follows from \cite[Lemma 2.5]{CranPazy}, see
also \cite[Theorem 2.4]{Barbu}.  

{\color{blue} \begin{proposition}\label{Hemicont}
A hemi-continuous and monotone operator $\Nscr_0:Z\rarrow Z'$, with \\
$\Dscr(\Nscr_0)=Z$, is maximal monotone.
\end{proposition}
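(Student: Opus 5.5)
To prove Proposition \ref{Hemicont} I would use the classical Minty trick, which characterizes maximality through the pointwise criterion: $\Nscr_0$ is maximal monotone if every pair $(u,f)\in Z\times Z'$ satisfying
\[ \langle f-\Nscr_0(w), u-w\rangle \m\geq\m 0 \FORALL w\in Z \]
must have $f=\Nscr_0(u)$. This criterion follows directly from the definition. If $\Nscr_0$ had a proper monotone extension $\tilde\Nscr$, there would be some $u$ and some $f\in\tilde\Nscr(u)$ with $f\neq \Nscr_0(u)$. Since $\Dscr(\Nscr_0)=Z$, every point already lies in the domain, so the extension can only add a new value at an existing point. Monotonicity of $\tilde\Nscr$ then gives the displayed inequality for this pair $(u,f)$. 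So it suffices to show that the inequality forces $f=\Nscr_0(u)$.

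Fix such $(u,f)$, an arbitrary $v\in Z$, and a sequence $t_n\rarrow 0$ with $t_n>0$. I substitute $w=u-t_nv$; this is allowed because $\Dscr(\Nscr_0)=Z$. The inequality becomes $t_n\langle f-\Nscr_0(u-t_nv), v\rangle\geq 0$, and dividing by $t_n>0$ gives
\[ \langle f-\Nscr_0(u-t_n v), v\rangle \m\geq\m 0 .\]
Now I apply hemi-continuity along the direction $-v$: $\Nscr_0(u+t_n(-v))\rarrow\Nscr_0(u)$ weakly in $Z'$. Pairing with the fixed vector $v$ and passing to the limit yields $\langle f-\Nscr_0(u),v\rangle\geq 0$ for every $v\in Z$. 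Replacing $v$ by $-v$ gives equality, so $f-\Nscr_0(u)$ vanishes as a functional on $Z$, that is, $f=\Nscr_0(u)$. Hence $\Nscr_0$ has no proper monotone extension.

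The argument is short, and the step that needs care is justifying the reduction to the pointwise criterion in the set-valued setting. A monotone extension may enlarge the value set at $u$ from $\{\Nscr_0(u)\}$ to a larger set, and I must check that monotonicity of the extension yields the inequality against all $w$ with the single values $\Nscr_0(w)$. This holds because the extension contains the graph of $\Nscr_0$. Everything else uses only the full domain, which is needed so that $u-t_nv$ is admissible for all $n$, and the weak convergence supplied by hemi-continuity. As a cross-check, in the Hilbert case with $Z'=Z$ one could instead verify surjectivity of $I+\Nscr_0$ and invoke Theorem \ref{Minty}, but that would require a Browder–Minty surjectivity argument, so I would take the direct route above.
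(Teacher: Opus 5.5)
Your proof is correct. Any monotone extension contains the graph of $\Nscr_0$ and cannot enlarge the full domain, and testing with $w=u-t_nv$, dividing by $t_n$ and passing to the weak limit by hemi-continuity forces $f=\Nscr_0(u)$. The paper gives no proof of its own but cites Crandall--Pazy (Lemma 2.5) and Barbu (Theorem 2.4); the standard argument behind that result is this same Minty trick, so your route appears to match the cited one, which the paper itself does not spell out.
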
} \vspace{-2.5mm}

%%%%%%%%%%**********%%%%%%%%%%**********%%%%%%%%%%**********%%%%%%%%%%
\section{Well-posed linear systems} \label{sec3} % Section 3

We recall some background on well--posed linear systems, following
\citep{Sala87,Staf_book,StWe02,TuWe_survey,Weiss10,WeStTu01}. We use
the standard notation from functional analysis, such as $\Lscr(X,Y)$
and $\Dscr(\Nscr)$ introduced in Sect.~\ref{sec1}, and $\rho(A)$ for
the resolvent set of $A$. For any interval $J$, $L^2(J;U)$ denotes
the space of $U$-valued $L^2$ functions defined on $J$, while
$\Hscr^1(J;U)$ is the Sobolev space of functions in $L^2(J;U)$ that
are integrals of functions in $L^2(J;U)$. For $n\in\{0,1,2,...\}$, $\m
C^n(J;U)$ is the space of the $n$ times continuously differentiable
functions on $J$, while $BC^n(J;U)$ is the space of bounded functions
in $C^n(J;U)$. If $n=0$, we omit to write it. $C^1_r(J;U)$ has been
introduced in Sect.~\ref{sec2}.

Let us denote by $U$ the {\it input space}, by $X$ the {\it state
space} and by $Y$ the {\it output space} of a well-posed linear system
\m $\Sigma^{\rm L}$ (these are Hilbert spaces). The input and the
output functions are $u\in\LE U$ and $y\in\LEloc Y$, respectively. For
any $y\in\LEloc U$ and any $\tau\geq 0$, we denote by $\PPP_\tau y$ 
the truncation of $y$ to the interval $[0,\tau]$. According to the 
standard meaning of the notation $L^2_\loc$, $\PPP_\tau y$ is in $L^2
([0,\infty);Y)$ and it is zero for $t>\tau$.

{\it A well-posed linear system} $\Sigma^{\rm L}$ consists of a family
of bounded linear operators $\Sigma^{\rm L}=(\Sigma^{\rm
L}_\tau)_{\tau\geq 0}$ such that \vspace{-4mm}
\begin{equation} \label{UNUZERO}
  \bbm{ x(\tau) \\ \PPP_\tau y } \m=\m \Sigma^{\rm L}_\tau
  \bbm{ x(   0) \\ \PPP_\tau u } \m.\vspace{-2mm}
\end{equation}
Here $x:[0,\infty)\rarrow X$ is the {\em state trajectory} of
$\Sigma^{\rm L}$ corresponding to the initial state $x(0)$ and the
input function $u$, and $y$ is the corresponding output
function. Denoting $c_\tau=\|\Sigma^{\rm L}_\tau\|$, we have
\vspace{-2mm}
\begin{equation} \label{wellposed_ineq}
\| x(\tau)\|^2 + \int_0^\tau \left\| y(t)\right\|^2 \dd t
   \m\leq\m c_\tau^2 \left( \| x(   0)\|^2 + \int_0^\tau \left\| 
   u(t)\right\|^2 \dd t \right). \vspace{-2mm}
\end{equation} 

The operators $\Sigma^{\rm L}_\tau$ are partitioned in a natural way:
\vspace{-2mm}
\begin{equation} \label{Sig4b}
   \Sigma^{\rm L}_\tau \m=\m \bbm{\tline_\tau & \Phi_\tau\\
   \Psi_\tau & \fline_\tau} \m.\vspace{-2mm}
\end{equation}
The four families of operators appearing on the right-hand side above
must satisfy four functional equations expressing the causality and
the time-invariance of \m $\Sigma^{\rm L}$ (these functional equations
are parts of the definition of a well-posed system), see for instance
\cite{Weiss10}. In particular, the family $(\tline_\tau)_{\tau\geq 0}$
is a strongly continuous operator semigroup on $X$ and its generator
$A$ is called the {\em semigroup generator} of $\Sigma^{\rm L}$. We
introduce $X_1=\Dscr(A)$ with the norm $\|x\|_1=\|(\beta I-A)x\|$,
where $\beta\in \rho(A)$. $X_{-1}$ is the completion of $X$ with
respect to the norm $\|x\|_{-1}=\|(\beta I-A)^{-1}x\|$. These spaces
are independent of the choice of $\beta$. $A$ has a unique extension
that is bounded from $X$ to $X_{-1}$, and we denote this extension by
the same symbol $A$. The semigroup $\tline$ can be extended to an
operator semigroup on $X_{-1}$, denoted by the same symbol, whose
generator is the extension of $A$ mentioned earlier. There exists a
unique operator $B\in \Lscr(U;X_{-1})$, called the {\em control
operator} of $\Sigma^{\rm L}$, such that for all $t\geq 0$,
\vspace{-1mm}
\[ \Phi_t u \m = \m \int^t_0 \tline_{t-\sigma} Bu(\sigma)\dd \sigma 
   \quad\quad \forall u\in L^2([0,\infty);U). \vspace{-1mm}\]
The above integration is done in $X_{-1}$. There exists a unique {\em 
observation operator} $C\in \Lscr(X_1,Y)$ so that for every
$\tau\geq 0$,\vspace{-1mm}
\[ (\Psi_\tau x_0)(t) \m=\m C\tline_t x_0 \quad\quad \forall x_0\in
   \Dscr(A),\ t\in[0,\tau]. \vspace{-1mm}\]

The operator $C$ has an extension $\bar{C}$ to the space $Z$ defined
as:\vspace{-1mm}
\begin{equation} \label{SpaceZ}
   Z \m=\m \Dscr(A)+(\beta I-A)^{-1}BU. \vspace{-1mm}
\end{equation}
This is a Hilbert space with the norm \vspace{-1mm}
\begin{equation} \label{Chauvin_convicted}
   \|z\|^2_Z \m=\m \inf\left\{\ \nm\| x \|^2_1+\| v \|^2
   \left|\begin{array}{c} x \in X_1,\ \ v \in U,\\ z=x+(\beta I-A
   )^{-1}Bv \end{array}\right.\right\},\vspace{-1mm}
\end{equation}
and $\bar{C}\in\Lscr(Z,Y)$. The extension $\bar{C}$ may not be unique.
For each such extension $\bar{C}$, there exists $D\in\Lscr(U,Y)$ such
that the transfer function $\GGG$ of $\Sigma^{\rm L}$ is given by
$\GGG(s)=\bar{C}(sI-A)^{-1}B+D$, for all $s\in \cline$ with ${\rm
Re}\m s>\omega_0$, where $\omega_0$ denotes the growth bound of
$\tline$.  The following proposition is contained in 
\citep[Theorem~3.1]{StWe02}.

{\color{blue}
\begin{proposition} \label{YomKippur}
We use the notation introduced earlier in this section. Assume that $u
\in\Hscr^1((0,\infty);U)$ and $x_0\in X$ are such that
$Ax_0+Bu(0)\in X$.  The state trajectory $x$ and the output function
$y$ of $\m\m\Sigma^{\rm L}$ are defined as in \rfb{UNUZERO}. Then,
denoting $e_\alpha(t)=e^{\alpha t}$ (for $\alpha\in\rline,\,t\geq 0$),
we have \vspace{-5mm}
\[ x\in C^1([0,\infty);X), \quad Ax+Bu\in C([0,\infty);X),\quad 
   e_{-\o} y \in \Hscr^1((0,\infty);Y),\vspace{-1mm} \]
for all $\o>\max\{0,\o_0\}$, and for every $t\geq 0$ we have that
\vspace{-2mm}
\begin{equation} \label{Kurds_abandoned}
  \frac{\dd x(t)}{\dd t} \m=\m Ax(t) + Bu(t), \quad y(t) \m=\m 
  \bar{C}x(t)+Du(t).
\end{equation}
\end{proposition}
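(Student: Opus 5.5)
We would prove Proposition \ref{YomKippur} in two stages. First we treat smooth data through the semigroup on $X_{-1}$. Then we handle the output through the transfer-function representation with the extension $\bar{C}$. The central device is to differentiate the state formula in time. This converts the compatibility condition $Ax_0+Bu(0)\in X$ into the statement that $\dot x$ is itself a state trajectory with data in $X$.

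\textbf{Step 1 (state).} Write $x(t)=\tline_t x_0+\int_0^t\tline_{t-\sigma}Bu(\sigma)\dd\sigma$, with the integral taken in $X_{-1}$. For $u\in\Hscr^1$, substituting $\sigma\mapsto t-\sigma$ and differentiating in $X_{-1}$ gives
\[ \dot x(t) \m=\m \tline_t\bigl(Ax_0+Bu(0)\bigr) + \int_0^t \tline_{t-\sigma}B\dot u(\sigma)\dd\sigma . \]
The first term is continuous in $X$ because $Ax_0+Bu(0)\in X$. The second term equals $\Phi_t\dot u$, and since $\dot u\in L^2$, well-posedness makes it continuous in $X$. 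Hence $x\in C^1([0,\infty);X)$. Separately, the same formula, viewed as a mild solution in $X_{-1}$ (where $A$ is bounded from $X$ into $X_{-1}$), yields $\dot x = Ax+Bu$ in $X_{-1}$. Therefore $Ax+Bu=\dot x\in C([0,\infty);X)$, which proves the first half of \rfb{Kurds_abandoned}.

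\textbf{Step 2 (the state lies in $Z$).} From $Ax(t)+Bu(t)\in X$ we get
\[ x(t)=(\beta I-A)^{-1}\bigl[\beta x(t)-(Ax(t)+Bu(t))\bigr]+(\beta I-A)^{-1}Bu(t). \]
The first summand lies in $X_1$ and the second in $(\beta I-A)^{-1}BU$, so $x(t)\in Z$ by \rfb{SpaceZ}. Both summands depend continuously on $t$ (in $X_1$ and in $U$ respectively), since $u\in\Hscr^1\subset C$. By \rfb{Chauvin_convicted} the map $t\mapsto x(t)$ is therefore continuous into $Z$, and $\bar{C}x+Du$ is continuous into $Y$.

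\textbf{Step 3 (output formula).} This is the main obstacle: we must show that the output $y$ defined by $\Psi_\tau x_0+\fline_\tau u$ coincides with $\bar{C}x+Du$. We would first verify this for exponential inputs $u(t)=e^{st}v$ with $x_0=(sI-A)^{-1}Bv$, taking ${\rm Re}\, s$ large. In that case $x(t)=e^{st}x_0$ and $y(t)=e^{st}\GGG(s)v$, and $\GGG(s)=\bar{C}(sI-A)^{-1}B+D$ is exactly the claimed formula. Next, using time-invariance and the identity $C\tline_t x=\bar{C}\tline_t x$ on $\Dscr(A)$, we would extend the identity to the general compatible data. The reduction works as follows:
\begin{itemize}
\item subtract from $(x_0,u)$ an exponential pair matching $u(0)$;
\item this leaves compatible data with $u(0)=0$ and $x_0\in\Dscr(A)$;
\item approximate such data by smooth compactly supported inputs, whose outputs are computed by Laplace transform, where the transfer function formula applies directly;
\item pass to the limit using the $Z$-continuity from Step 2 together with the $L^2_{\rm loc}$ continuity of $\Sigma^{\rm L}$.
\end{itemize}

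\textbf{Step 4 (regularity of the output).} Finally, $e_{-\o}y\in\Hscr^1$ follows by applying the same reasoning to the derivative pair. The pair $(\dot x,\dot u)$ is the trajectory generated by $(Ax_0+Bu(0),\dot u)$, so $\dot y=\Psi(Ax_0+Bu(0))+\fline\dot u$ lies in $L^2_{\rm loc}$. The bounds on $\Sigma^{\rm L}_\tau$ grow at most like $e^{\o\tau}$ for $\o>\max\{0,\o_0\}$. This gives the required exponentially weighted $L^2$ estimates on $y$ and $\dot y$, hence $e_{-\o}y\in\Hscr^1((0,\infty);Y)$.
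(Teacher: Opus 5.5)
The paper gives no argument for this proposition; it quotes it from \cite[Theorem~3.1]{StWe02}, so there is no in-paper proof to compare with step by step. Judged on its own, your outline is sound.

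Steps 1 and 2 are correct and contain the key ingredients: the formula $\dot x(t)=\tline_t(Ax_0+Bu(0))+\Phi_t\dot u$, and the splitting $x(t)=(\beta I-A)^{-1}[\beta x(t)-\dot x(t)]+(\beta I-A)^{-1}Bu(t)$. Together they give $\|x(t)\|_Z^2\leq\|\beta x(t)-\dot x(t)\|^2+\|u(t)\|^2\leq Ke^{2\omega t}$.

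The reduction in Step 3 also works, but it is a detour. By the exponential bound just stated, the Laplace integral $\widehat x(\lambda)$ converges in $Z$, so $\bar C$ commutes with it. Using $\widehat{\Psi x_0}(\lambda)=C(\lambda I-A)^{-1}x_0$ and $\widehat{\fline u}=\GGG\,\widehat u$, you get directly for the original data
\[
\widehat y(\lambda)=\bar C\bigl[(\lambda I-A)^{-1}x_0+(\lambda I-A)^{-1}B\widehat u(\lambda)\bigr]+D\widehat u(\lambda)=\bar C\widehat x(\lambda)+D\widehat u(\lambda),
\]
hence $y=\bar Cx+Du$, with no exponential pair and no approximation. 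Your unproved claim $y=e_s\GGG(s)v$ for the exponential pair is itself an instance of this computation.

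If you keep the approximation route, note what is actually needed. It is not continuity of $t\mapsto x(t)$ in $Z$ but continuous dependence of $x(t)\in Z$ on the data, uniformly on $[0,T]$. By the estimate above this follows once the approximating inputs converge in $\Hscr^1(0,T)$. Convergence in $L^2$ alone controls $x(t)$ only in $X$, so the phrase ``$L^2_{\rm loc}$ continuity of $\Sigma^{\rm L}$'' is not enough there.

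The one inference that is not justified as written is in Step 4. Knowing that $(\dot x,\dot u)$ is the state trajectory for the data $(Ax_0+Bu(0),\dot u)$ is a statement about the state only; by itself it says nothing about $\dot y$. Use time invariance instead. The function $y(\cdot+h)$ is the output for the data $(x(h),u(\cdot+h))$, so $\frac1h\bigl(y(\cdot+h)-y\bigr)$ is the output for $\bigl(\frac1h(x(h)-x_0),\frac1h(u(\cdot+h)-u)\bigr)$. By Step 1, this data converges in $X\times L^2$ to $(Ax_0+Bu(0),\dot u)$. Continuity of $\Sigma^{\rm L}_\tau$ then makes the difference quotients converge in $L^2(0,\tau)$ to $\Psi(Ax_0+Bu(0))+\fline\dot u$. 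Hence $y\in\Hscr^1_{\rm loc}$ with that derivative, and your exponential growth bound on $\|\Sigma^{\rm L}_\tau\|$ finishes the proof.
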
}

Now we recall the class of conservative linear systems ``out of thin air" that were
already mentioned in Sect.~\ref{sec1}. Using the notation $U$, $H$,
$H_\half$, $H_{-\half}$, $A_0$, $B_0$, $C_0$ as in Sect.~\ref{sec1},
we also introduce $H_1=\Dscr(A_0)$, with the inner product $\langle
x,z \rangle_{H_1}=\langle A_0x, A_0z \rangle$ and the corresponding
norm $\|.\|_{H_1}$.
Consider the linear system $\Sigma^{\rm L}$ described by: \vspace{-1mm}
\begin{equation} \label{LinearSys}
  \frac{\dd^2 z(t)}{\dd t^2}+A_0z(t) + \half B_0\frac{\dd}{\dd t} C_0
  z(t) = B_0 u(t), \quad y(t) \m=\m -C_0\frac{\dd z(t)}{\dd t} + u(t).
\end{equation}
We know from Theorems 1.1 and 1.2 of \cite{WeTu03} that the equations
\rfb{LinearSys} determine a well-posed system with state $x(t)=\sbm
{z(t)\\ \frac{\dd z(t)}{\dd t}}$ and state space $X=H_\half\times H$. Moreover, if
$u\in\Hscr^1((0,\infty);U)$, and the initial conditions satisfy $z(0)=
z_0\in H_\half$, $\frac{\dd z}{\dd t}(0)=w_0\in H_\half$ and
\vspace{-3mm}
\begin{equation} \label{BCondition}
  A_0 z_0 + \half B_0 C_0 w_0 - B_0 u(0) \m\in\m H \m,
\end{equation}
then the solution $z$ and the output $y$ from \rfb{LinearSys} satisfy 
\vspace{-2mm}
\begin{equation} \label{Solution_z}
  z\in BC([0,\infty);Z_0) \cap BC^1([0,\infty); H_{\half})\cap 
  BC^2([0,\infty); H),\m\quad y\in\Hscr^1((0,\infty);U),\vspace{-1mm}
\end{equation}
where $Z_0=\Dscr(A_0)+A_0^{-1}B_0U\subset H_\half$, with the norm 
\vspace{-2mm}
\begin{equation} \label{Matty}
   \|z\|^2_{Z_0} \m=\m {\rm inf} \left\{\|z_1\|_{H_1}^2+\|v\|^2
   \left|\ \ z=z_1+A_0^{-1}B_0v,\ \ z_1\in\Dscr(A_0),
   \ \ v\in U \right.\right\}.
\end{equation}
Furthermore, these solutions satisfy \rfb{wellposed_ineq} with 
equality and with $c_\tau=1$: \vspace{-2mm}
\[ \left\| \bbm{z(\tau)\vspace{1mm}\\ \left(\frac{\dd z}{\dd t}\right)
   (\tau)}\right\|^2 + \int_0^\tau \left\| y(t)\right\|^2 \dd t \m=\m 
   \left\| \bbm{z(0)\vspace{1mm}\\ \left(\frac{\dd z}{\dd t}\right)
   (0)} \right\|^2 + \int_0^\tau \left\| u(t)\right\|^2 \dd t .\]
The equations \rfb{LinearSys} are equivalent to \rfb{Kurds_abandoned} 
with $D=I$ and with $A$, $B$, $\bar{C}$ defined by \vspace{-2mm}
\begin{equation} \label{equi_lin0}
   A \m=\m \bbm{0 & I\\ -A_0 & -\half B_0C_0}, \quad\m\m\m\m B \m=\m
   \bbm{0\\ B_0}, \vspace{-2mm}
\end{equation}
\[ \Dscr(A) \m=\m \left\{\bbm{z\\w}\in H_\half\times H_\half \left|
   \m\m\m\m A_0z+\half B_0 C_0w\in H\right.\right\},\vspace{-2mm}\]
\begin{equation} \label{equi_lin1}
   \bar{C}:Z_0\times H_\half\rarrow U, \m\m\m\quad \bar{C} \m=\m
   \bbm{0 & -C_0}. \vspace{-1mm}
\end{equation}
We refer to \cite{StWe12, WeTu03} for details and for the
proofs of the results mentioned  above. \vspace{-0.1mm}

%%%%%%%%%%**********%%%%%%%%%%**********%%%%%%%%%%**********%%%%%%%%%%
\section{Lax-Phillips semigroups and well-posed nonlinear systems}
   \label{sec4} % Section 4

Following \cite{StWe02} we can define a well-posed linear system
$\Sigma^{\rm L}$ by a semigroup $\ULax^{\rm L}$ called its {\em
Lax-Phillips semigroup}. This semigroup contains the entire
information about the system.

For any $\tau\geq 0$, we denote by $\SSS_\tau$ the (unilateral) right
shift operator by $\tau$ on $\Uscr=L^2([0,\infty);U)$ and also on
$\Yscr=L^2((-\infty,0];Y)$, so that their adjoints $\SSS^*_\tau$ are
the operators of left shift by $\tau$ on the same spaces. We also
introduce $\Sscr_t$, the bilateral right shift by $t$ acting on
$L^2((-\infty,\infty);Y)$ (where $t\in\rline$). We regard $\Yscr$ as a
subspace of $L^2((-\infty,\infty);Y)$, by extending 
%\smallskip \eject \noindent 
functions in $\Yscr$ to be zero for $t>0$. We regard $L^2([0,t];Y)$ as
a subspace of $L^2([0,\infty);Y)$ (by extending functions to be zero
outside $[0,t]$). Recall the notation $\PPP_\tau$ from
Sect. \ref{sec3}.

{\color{blue}
\begin{proposition} \label{LaxPhilProp}
Let $\Sigma^{\rm L}$ be a well-posed linear system as in \rfb{UNUZERO}
and \rfb{Sig4b}. For all $t\geq 0$, we define on $\Yscr\times
X\times\Uscr$ the operator $\ULax^{\rm L}_t$ by \vspace{-1mm}
\[ \ULax^{\rm L}_t \m=\m \bbm{\Sscr_{-t} & 0    & 0 \\
                      0      & I            & 0 \\
                      0      & 0            & \SSS_t^*}
             \bbm{    I      & \Psi_t       & \fline_t\\
                      0      & \tline_t     & \Phi_t\\
                      0      & 0            & I}.\vspace{-2mm}\]
Then $\ULax^{\rm L}=(\ULax^{\rm L}_t)_{t\geq 0}$ is a strongly 
continuous semigroup of linear operators.
\end{proposition}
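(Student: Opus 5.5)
The plan is to check the three defining properties of a strongly continuous semigroup for $\ULax^{\rm L}_t$ on $\Yscr\times X\times\Uscr$: $\ULax^{\rm L}_0=I$, the semigroup identity $\ULax^{\rm L}_{t+s}=\ULax^{\rm L}_t\ULax^{\rm L}_s$, and strong continuity at $t=0$. Each $\ULax^{\rm L}_t$ is bounded, being a product of bounded operators: $\Sscr_{-t}$ and $\SSS^*_t$ are isometric or contractive, and $\Psi_t,\fline_t,\tline_t,\Phi_t$ are bounded by well-posedness. Since $\Psi_0=0$, $\fline_0=0$, $\Phi_0=0$, $\tline_0=I$ and $\SSS_0^*=I$, the identity $\ULax^{\rm L}_0=I$ is immediate.

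For the semigroup property I will not multiply out the block matrices. Instead I will use the interpretation of $\ULax^{\rm L}_t$ given by \rfb{UNUZERO}. Take $(y_-,x_0,u)\in\Yscr\times X\times\Uscr$. Then $\ULax^{\rm L}_t(y_-,x_0,u)$ equals
\[\bigl(\Sscr_{-t}(y_-+\PPP_t y),\ x(t),\ \SSS_t^*u\bigr),\]
where $x$ and $y$ are the state trajectory and output produced by $x_0$ and $u$. In words, the first component is the past output concatenated with the new output on $[0,t]$, then shifted left by $t$, so that it lies again in $\Yscr$. The causality and time-invariance functional equations (see \cite{Weiss10}) say that restarting the system at time $s$ from state $x(s)$ with input $\SSS_s^*u$ produces state $x(s+t)$ at time $t$ and output $\SSS^*_s y$. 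Equivalently:
\[\tline_{t+s}=\tline_t\tline_s,\qquad \Phi_{t+s}u=\tline_t\Phi_s u+\Phi_t\SSS_s^*u,\]
together with the corresponding concatenation identities for $\Psi$ and $\fline$. Applying $\ULax^{\rm L}_t$ to $\ULax^{\rm L}_s(y_-,x_0,u)$ then gives:
\begin{itemize}
\item third component $\SSS_t^*\SSS_s^*u=\SSS_{t+s}^*u$;
\item second component $x(s+t)$;
\item first component $\Sscr_{-t}\bigl(\Sscr_{-s}(y_-+\PPP_s y)+\PPP_t\SSS_s^*y\bigr)=\Sscr_{-(t+s)}\bigl(y_-+\PPP_{s+t}y\bigr)$,
\end{itemize}
using $\Sscr_{-s}\PPP_{s+t}y=\Sscr_{-s}\PPP_s y+\PPP_t\SSS_s^* y$ on the real line. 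This is exactly $\ULax^{\rm L}_{t+s}(y_-,x_0,u)$. The bookkeeping of shifts in the first component is the one place where care is needed, and I would write it out explicitly.

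For strong continuity it is enough, given local uniform boundedness ($c_\tau$ is nondecreasing), to prove continuity at $t=0^+$ for each fixed triple. I take the components in turn.
\begin{itemize}
\item $\SSS_t^*u\to u$ in $\Uscr$ by continuity of translation in $L^2$.
\item $x(t)=\tline_tx_0+\Phi_tu\to x_0$, because $\tline$ is strongly continuous and $\|\Phi_t u\|=\|\Phi_t\PPP_t u\|\leq c_1\|\PPP_tu\|_{L^2}\to0$.
\item The first component is $\Sscr_{-t}y_-+\Sscr_{-t}\PPP_ty$. Here $\Sscr_{-t}y_-\to y_-$ in $L^2$, while $\|\Sscr_{-t}\PPP_t y\|=\|\PPP_ty\|_{L^2}\leq c_1(\|x_0\|^2+\|\PPP_tu\|^2)^{1/2}$ by \rfb{wellposed_ineq}.
\end{itemize}

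The main obstacle is the last bound: $\|x_0\|$ does not tend to zero, so this estimate alone does not give $\|\PPP_ty\|\to0$. I would resolve it by absolute continuity of the integral: $y\in L^2_{\rm loc}$, so $\int_0^t\|y\|^2\to0$ as $t\to0$. Hence all components converge, and $\ULax^{\rm L}$ is strongly continuous.
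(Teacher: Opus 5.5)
The paper does not prove this proposition itself; it quotes it from \cite{StWe02}. Your direct verification is the standard argument and is correct. It consists of $\ULax^{\rm L}_0=I$, the semigroup law derived from the causality and time-invariance functional equations, and strong continuity at $0^+$ checked componentwise. Your shift bookkeeping $\Sscr_{-s}\PPP_{s+t}y=\Sscr_{-s}\PPP_sy+\PPP_t\SSS_s^*y$ is right. So is your key observation that $\int_0^t\|y\|^2\to0$ for the single output function $y\in L^2_{\rm loc}$.

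One justification is wrong, though. The norm $c_\tau=\|\Sigma^{\rm L}_\tau\|$ need not be nondecreasing: with $X=\rline$, $\tline_t=e^{-t}$ and $B=0$, $C=0$, $D=0$ one gets $c_\tau=e^{-\tau}$. You rely on that claim in two places, and both are easy to repair.

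First, the bound $\|\Phi_t\|\leq c_1$ for $t\leq 1$ follows instead from $\Phi_tu=\Phi_1\SSS_{1-t}u$. This is the $\Phi$ functional equation you quoted, applied to the input that vanishes on $[0,1-t]$; it gives $\|\Phi_t\|\leq\|\Phi_1\|\leq c_1$.

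Second, local boundedness of $\|\ULax^{\rm L}_t\|$ follows in the same way from $\Psi_tx=\PPP_t\Psi_1x$, $\fline_tu=\PPP_t\fline_1u$ and $\|\tline_t\|\leq Me^{\omega t}$. Alternatively, it follows from the uniform boundedness principle once continuity at $0^+$ is established.
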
}

If we take $y_0\in\Yscr$, $x_0\in X$ and $u_0\in\Uscr$ to represent
the past output function of $\Sigma^{\rm L}$ (for $t<0$), its initial
state and its input function respectively, then at any time $t\geq 0$,
the first component of $\ULax^{\rm L}_t\sbm{y_0 \\ x_0 \\ u_0}$
represents the past output up to $t$, the second component represents
the present state $x(t)$ and the third component represents the future
input that will reach $\Sigma^{\rm L}$ after $t$. The generator of
$\ULax^{\rm L}$ can be characterized as follows: 

{\color{blue}
\begin{proposition} \label{LaxPhillipsGenTh}
Let $\ULax^{\rm L}$ be the Lax--Phillips semigroup of the well-posed
linear system $\Sigma^{\rm L}$. The generator of $\ULax^{\rm L}$ is
denoted by $\GothA^{\rm L}$. Using the notation $A$, $B$, $\bar{C}$,
$D$ from \rfb{Kurds_abandoned}, the domain of $\GothA^{\rm L}$ is
\vspace{-4mm}
\[ \Dscr(\GothA^{\rm L}) \m=\m \left\{\bbm{y_0\vspace{2mm}\\ x_0
   \vspace{2mm}\\ u_0}\in\left.\begin{array}{c c c} \Hscr^1
   ((-\infty,0);Y)\\ \times\\ X\\ \times\\ \Hscr^1((0,\infty);U)
   \end{array}\right| \begin{array}{c c c} Ax_0+Bu_0(0)\in X,
   \vspace{2mm}\\ y_0(0)=\bar{C}x_0 + Du_0(0)\end{array}\right\}.
   \vspace{-1mm} \]
The generator $\GothA^{\rm L}$ is given by \vspace{-2mm}
\begin{equation} \label{GothAact}
   \GothA^{\rm L} \bbm{y_0 \\ x_0 \\ u_0} \m=\m
   \bbm{ \vspace{1mm} y_0' \\ \vspace{1mm} Ax_0+Bu_0(0) \\ 
   u_0'}\quad \FORALL \bbm{y_0\\ x_0\\ u_0}\in\Dscr(\GothA^{\rm L}).
   \vspace{-1mm}
\end{equation}
\end{proposition}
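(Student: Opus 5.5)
We work with the factorization of $\ULax^{\rm L}_t$ from Proposition \ref{LaxPhilProp} and the regular-solution result, Proposition \ref{YomKippur}. The strategy has two halves. First, we show that every element of the set on the right (call it $\Dscr$) lies in $\Dscr(\GothA^{\rm L})$, with $\GothA^{\rm L}$ acting as in \rfb{GothAact}. Second, we show $\Dscr(\GothA^{\rm L})\subset\Dscr$. For the second half we use that $\Dscr$ is invariant under $\ULax^{\rm L}$ and dense, and then invoke the core argument (Engel--Nagel): an invariant dense subspace of the domain is a core. A more direct alternative is a resolvent argument, showing that $\lambda I-\GothA$ restricted to $\Dscr$ is surjective for large real $\lambda$.

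\emph{First inclusion.} Take $\sbm{y_0\\x_0\\u_0}\in\Dscr$ and set $x(t)=\tline_tx_0+\Phi_tu_0$ and $y=\Psi_\infty x_0+\fline_\infty u_0$. By Proposition \ref{YomKippur}:
\begin{itemize}
\item $x\in C^1([0,\infty);X)$, so $\frac1t(x(t)-x_0)\to Ax_0+Bu_0(0)$ in $X$;
\item $e_{-\o}y\in\Hscr^1$ and $y(0)=\bar Cx_0+Du_0(0)=y_0(0)$.
\end{itemize}
The first component of $\ULax^{\rm L}_ty_0$ is the function $\Sscr_{-t}$ applied to the concatenation of $y_0$ on $(-\infty,0)$ and $y$ on $[0,t]$. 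Because the two pieces match at $0$, this concatenation $\tilde y$ is locally $\Hscr^1$ on $(-\infty,t)$. Hence $\frac1t(\Sscr_{-t}\tilde y-y_0)\to y_0'$ in $L^2((-\infty,0);Y)$, using the standard fact that the left-shift difference quotient converges for $\Hscr^1$ functions. The only extra care needed is that the integrand near $0$ involves $y'$ on $[0,t]$, which is square integrable. Similarly, $\frac1t(\SSS^*_tu_0-u_0)\to u_0'$ in $\Uscr$. This gives the inclusion and the formula \rfb{GothAact} on $\Dscr$.

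\emph{Second inclusion.} We use that $\Dscr$ is invariant under $\ULax^{\rm L}_t$. The state $x(t)$ with future input $\SSS_t^*u_0$ again satisfies the compatibility conditions, since $Ax(t)+Bu_0(t)=\dot x(t)\in X$ by Proposition \ref{YomKippur}. The new past output is $\Hscr^1$ with end value $y(t)=\bar Cx(t)+Du_0(t)$ by \rfb{Kurds_abandoned}. For density, we approximate:
\begin{itemize}
\item $u_0$ by smooth functions $u$ with $u(0)$ arbitrary (the compatibility condition can always be met using $x_0\in X$ approximated by $(\beta I-A)^{-1}(\,\cdot\,)$ corrections);
\item $x_0$ by $x_1-(\beta I-A)^{-1}Bu(0)$ with $x_1\in\Dscr(A)$;
\item $y_0$ by $\Hscr^1$ functions with a prescribed value at $0$, which are dense in $L^2$.
\end{itemize}
Then the core theorem gives that the closure of $\GothA^{\rm L}|_\Dscr$ is $\GothA^{\rm L}$.

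\emph{Main obstacle.} We must show that $\GothA^{\rm L}|_\Dscr$ is already closed, so that $\Dscr(\GothA^{\rm L})$ equals $\Dscr$ rather than its graph closure. We would check this directly. Suppose elements of $\Dscr$ converge in graph norm. Then $y_0^n\to y_0$ in $\Hscr^1$ and $u_0^n\to u_0$ in $\Hscr^1$, so point evaluations at $0$ converge. Next, $Ax_0^n+Bu_0^n(0)$ converges in $X$. Applying $(\beta I-A)^{-1}$ shows that $x_0^n-(\beta I-A)^{-1}Bu_0^n(0)$ converges in $X_1$, so $x_0^n$ converges in $Z$ from \rfb{SpaceZ}. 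By continuity of $\bar C$ on $Z$, the relation $y_0(0)=\bar Cx_0+Du_0(0)$ passes to the limit. This closedness step, especially the $Z$-norm convergence, is where the most care is needed.
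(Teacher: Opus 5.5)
The paper gives no proof of this proposition. It is recalled from Staffans--Weiss together with Proposition~\ref{LaxPhilProp}, so there is no in-paper argument to compare with, and your proposal has to stand on its own.

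It does stand: your argument is correct.
\begin{itemize}
\item \emph{First inclusion.} This is exactly what Proposition~\ref{YomKippur} gives. The matching condition $y_0(0)=\bar C x_0+Du_0(0)=y(0)$ makes the concatenated past output locally $\Hscr^1$, so the left-shift difference quotient converges to $y_0'$ in $\Yscr$.
\item \emph{Invariance.} Your set $\Dscr$ is invariant under $\ULax^{\rm L}$, by the same proposition applied at every $t\geq 0$.
\item \emph{Core.} Density plus invariance makes $\Dscr$ a core.
\item \emph{Closedness.} Your check is right: graph-norm convergence forces $x_0^n\to x_0$ in the space $Z$ of \rfb{SpaceZ}, where $\bar C$ is bounded. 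Hence the restriction of $\GothA^{\rm L}$ to $\Dscr$ is closed and equals $\GothA^{\rm L}$.
\end{itemize}

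One slip needs fixing. In the density step the compatible states are $x_1+(\beta I-A)^{-1}Bu(0)$ with $x_1\in\Dscr(A)$, not $x_1-(\beta I-A)^{-1}Bu(0)$. Indeed $A(\beta I-A)^{-1}Bv=\beta(\beta I-A)^{-1}Bv-Bv$, so only the plus sign yields $Ax_0+Bu(0)\in X$. This is also the decomposition you use yourself in the closedness step. The set $\Dscr(A)+(\beta I-A)^{-1}Bu(0)$ is a translate of a dense subspace, so it is dense in $X$, as you need.

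As for what your route buys: a resolvent-based proof would typically need an explicit formula for $(\lambda I-\GothA^{\rm L})^{-1}$ in terms of the transfer function $\GGG(\lambda)$. Your core-plus-closedness argument needs no such formula. All the system-theoretic input is concentrated in Proposition~\ref{YomKippur} and in the fact that $\bar C\in\Lscr(Z,Y)$.
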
}

We denote by $\delta^*_0$ the operator of point evaluation at $0$,
regarded as an operator from $\Hscr^1((0,\infty);U)$ to $U$, and
$\delta_0$ is its adjoint, mapping $U$ into $\Hscr^1((0,\infty);U)'$
(the dual space of $\Hscr^1((0,\infty);U)$ with respect to the pivot
space $\Uscr$).

\begin{remark} \label{GothA_equivalent} \rm
The operator $\GothA^{\rm L}$ from \rfb{GothAact} can also be written
in the form \vspace{-2mm}
\begin{equation} \label{GothA_LP}
   \GothA^{\rm L} \m=\m \bbm{\left[\frac{\dd}{\dd \xi}\right]_\Yscr & 
   \delta_0\bar{C} & \delta_0 D\delta_0^*\\ 0 & A & B\delta_0^*\\ 0 
   & 0 & \left[\frac{\dd}{\dd \xi}\right]_\Uscr}, \vspace{-2mm}
\end{equation} 
where $\left[\frac{\dd}{\dd\xi}\right]_\Yscr$ is the generator of the
left shift semigroup on $\Yscr$, whose domain is $\Yscr_1=\Hscr^1_0
((-\infty,0);Y)$, and $\left[\frac{\dd}{\dd\xi}\right]_\Uscr$ is the
generator of the left shift semigroup on $\Uscr$, whose domain is
$\Uscr_1=\Hscr^1((0,\infty);U)$. Thus, for any $\varphi\in\Hscr^1
((-\infty,0);Y)$, $\left[\frac{\dd}{\dd\xi}\right]_\Yscr\varphi$ is in
$\Yscr_{-1}$, the dual space of $\Hscr^1((-\infty,0);Y)$ with respect
to the pivot space $\Yscr$, and \vspace{-1mm}
\begin{equation} \label{Kirsten_Morris}
  \left[\frac{\dd}{ \dd \xi}\right]_\Yscr \varphi \m=\m \varphi' - 
  \delta_0\varphi(0) \m,
\end{equation}
where $\varphi'$ is the usual derivative of $\varphi$, see for
instance Example 4.2.7 in \cite{obs_book}.
\end{remark}

Following our article \cite{ShWeTu:art149}, we define a (possibly
nonlinear) well-posed system via its (nonlinear version of the)
Lax-Phillips semigroup.

\begin{definition} \label{abstractnonlinear} 
A time invariant well-posed (possibly nonlinear) system $\Sigma^\NL$
with input space $U$, state space $X$ and output space $Y$ consists of
two families of (possibly nonlinear) continuous operators
\vspace{-2mm}
\[ \Sigma^{\rm  st} \m=\m (\Sigma^{\rm  st}_t)_{t\geq 0} \m,\quad
   \Sigma^{\rm out} \m=\m (\Sigma^{\rm out}_t)_{t\geq 0} \m, \]
where $\Sigma^{\rm st}_t:X\times\Uscr\rarrow X$ and $\Sigma
^{\rm out}_t:X\times\Uscr\rarrow L^2([0,t];Y)$ such that the 
following is a strongly continuous semigroup of (possibly nonlinear)
operators $\ULax^\NL=(\ULax^\NL_t)_{t\geq 0}$ on $\Yscr
\times X\times\Uscr${\rm :} \vspace{-1mm}
\begin{equation} \label{J_Pasha}
   \ULax^\NL_t \m=\m \bbm{\Sscr_{-t} & 0 \sbluff    & 0 \\
   0 & I & 0 \\ 0 & 0 & \SSS_t^*} \left[\begin{array}{c|c c} I & \ 
   \Sigma^{\rm out}_t \\ 0 & \ \Sigma^{\rm st} _t \\ \hline
   \ \ 0 \ \  & 0\ \ \ I \end{array}\right]\quad \FORALL t\geq 0.
\end{equation}
Moreover, we require that for all $\tau\geq 0$, \vspace{-1mm}
\begin{equation} \label{Identities}
   \Sigma^{\rm st}_\tau\bbm{x_0\\ v} =\m \Sigma^{\rm st}_\tau
   \bbm{x_0\\ \PPP_\tau v},\ \quad \ \Sigma^{\rm out}_\tau\bbm{x_0\\
   v} =\m \Sigma^{\rm out}_\tau\bbm{x_0\\ \PPP_\tau v},
\end{equation}
for all $v\in\Uscr$ and $x_0\in X$.
\end{definition} 

The identities \rfb{Identities} are called the {\em causality
conditions}. If $x_0\in X$ and $u\in\Uscr$, the function $t\rarrow
x(t)$ defined by $x(t)=\Sigma^{\rm st}_t\sbm{x_0\\ u}$ is called the
{\em state trajectory} of $\Sigma^\NL$ corresponding to the initial
state $x_0$ and the input function $u$. The function $y_\tau=\Sigma
^{\rm out}_\tau\sbm{x_0\\ u}$ is called the {\em output function} of
$\Sigma^\NL$ corresponding to $x_0$ and $u$, restricted to the 
interval $[0,\tau]$. (We can define a locally $L^2$ function $y$ on
$[0,\infty)$ such that for any $\tau\geq 0$, the restriction of $y$
to $[0,\tau]$ is $y_\tau$ introduced earlier.) We refer to 
\cite{ShWeTu:art149} for comments and examples concerning these
concepts.

A time-invariant well-posed (possibly nonlinear) system $\Sigma^\NL$
is called {\em incrementally scattering passive} if its Lax-Phillips
semigroup $\ULax^\NL$ is contractive, or equivalently, if the estimate
\rfb{energy_bal_M} holds. Incrementally scattering passive systems
come with the advantage that they can be described locally in time,
via the generator of $\ULax^\NL$, using Theorem \ref{Putin}. This is a
topic that we shall explore in another paper.

%%%%%%%%%%**********%%%%%%%%%%**********%%%%%%%%%%**********%%%%%%%%%%
\section{The main result} \label{sec5} % Section 5

Following our article \cite{ShWeTu:art149}, we define the classical
and the generalized solutions of \rfb{Inclusion1} and
\rfb{Output1}. We continue to use the notation of Sect. \ref{sec1},
\ref{sec3} and \ref{sec4}. Remember that in this paper, $\dot{z}$ and
$\ddot{z}$ denote the right derivatives of $z$. In this section $Y=U$,
so that $\Yscr=L^2((-\infty,0];U)$.

\begin{definition} \label{Clas_Sol}
Assume that $A_0$ is a positive and boundedly invertible operator on
$H$ with domain $\Dscr(A_0)\subset H$, $\Nscr$ is a maximal monotone
(possibly set-valued) operator from $H_{\half}$ to
$H_{-\half}$ with $\Dscr(\Nscr)$ dense in $H_\half$. Let $B_0\in\Lscr(U;H_{-\half})$, $C_0=B_0^*$ and
$X=H_\half\times H$. Let $u,y\in \Uscr$, $z\in
C([0,\infty); H_\half)\cap C^1([0,\infty);H)$ and denote
$x=\sbm{z\\\dot{z}}$ $(\text{so that}\m\m x\in C([0,\infty);X))$.
 
The triple $(x,u,y)$ is called a {\em classical solution} of
\rfb{Inclusion1} and \rfb{Output1} if
\begin{enumerate}[\rm (a)]
\item $x$ is right differentiable (in $X$) and $\dot{z}(t)\in\Dscr
      (\Nscr)$, for all $t\geq 0$,
\item $u,y\in\Hscr^1((0,\infty);U)$, 
\item $\left(A_0z(t)+\half B_0C_0\dot{z}(t)+\Nscr(\dot{z}(t))-B_0
      u(t)\right)\cap H\neq \emptyset\m$ for all $t\geq 0$,
\item the inclusion \rfb{Inclusion1} and the formula \rfb{Output1} 
      hold for all $t\geq 0$.\\
The triple $(x,u,y)$ is called a {\em generalized solution} of
\rfb{Inclusion1} and \rfb{Output1} if there exists a sequence
$(x_n,u_n,y_n)$ of classical solutions of \rfb{Inclusion1} and
\rfb{Output1}, such that $x_n(t)\rarrow x(t)$ in $X$ for all $t\geq
0$, $u_n\rarrow u$ in $\Uscr$ and $y_n\rarrow y$ in $\Uscr$.
\end{enumerate}
\end{definition}

\begin{remark}\label{Solutions}
For a classical solution $(x,u,y)$ of \rfb{Inclusion1} and
\rfb{Output1}, where $x=\sbm{z\\ \dot{z}}$, we call $(z(0),\dot{z}(0),
u)$ the initial data of this solution. If the system \rfb{Inclusion1}
and \rfb{Output1} has classical solutions for a dense subspace of
initial data $(z_0, w_0, u_0)\in H_\half\times H\times \Uscr$, and
these solutions satisfy \rfb{energy_bal_M}, then it follows that
\rfb{Inclusion1} and \rfb{Output1} determine an incrementally
scattering passive well-posed system $\Sigma^\Nscr$. Indeed, for any
$(z_0,w_0,u_0)\in H_{\half}\times H\times \Uscr$ there exists a
sequence $(z_{n},w_{n},u_{n})\in H_\half\times\Dscr(\Nscr)\times\Hscr
^1(0,\infty);U)$ such that $z_{n}\rarrow z_0$ in $H_\half$, $w_{n}
\rarrow w_0$ in $H$ and $u_{n}\rarrow u_0$ in $\Uscr$. Let $\left(
\sbm{\tilde{z}_{n}\\ \tilde{w}_{n}},u_{n},y_n\right)$ be the classical
solution of \rfb{Inclusion1} and \rfb{Output1} corresponding to the
initial data $(z_n,w_n,u_n)$. According to inequality
\rfb{energy_bal_M}, $\left(\sbm{\tilde{z}_n(t)\\ \tilde{w}_n(t)},
u_n,y_n\right)$ is a Cauchy sequence in $H_\half\times H\times\Uscr
\times\Uscr$. Hence, there exist limits $\left(\sbm{\tilde{z}(t)\\
\tilde{w}(t)},\tilde{u},\tilde{y}\right)$ such that \vspace{-2mm}
\[ \left(\sbm{\tilde{z}_n(t) \\ \tilde{w}_n(t)}, u_{n}, y_n\right)
   \rarrow \left(\sbm{\tilde{z}(t)\\ \tilde{w}(t)}, \tilde{u}, 
   \tilde{y}\right)\in H_\half \times H\times \Uscr\times \Uscr,
   \vspace{1mm} \]
because $H_\half\times H\times\Uscr\times\Uscr$ is complete. Clearly
$\tilde{u}=u_0$. According to Definition \ref{Clas_Sol} $(\tilde{x},
\tilde{u}, \tilde{y})$ is a unique generalized solution of
\rfb{Inclusion1} and \rfb{Output1}, where $\tilde{x}=\sbm{\tilde{z}\\
\tilde{w}}$.
\end{remark}

The following theorem gives sufficient conditions for the differential
inclusion \rfb{Inclusion1} and the equation \rfb{Output1} to have
classical solutions and shows that if $(x,u,y)$ is such a solution,
then it has some additional properties. Most importantly, we conclude
that \rfb{Inclusion1} and \rfb{Output1} determine an incrementally
scattering passive system. A shorter version of this theorem, with
only an outline of the proof, is in our conference paper
\cite{SW:CDC2022}.

{\color{blue}
\begin{theorem}\label{Main}
The system $\Sigma^\Nscr$ defined by the differential inclusion
\rfb{Inclusion1} and output equation \rfb{Output1} is well-posed with
the state space $X=H_{\half}\times H$ and the input and output space
$U$, in the following sense:

If $u\in\Hscr^1((0,\infty);U)$ and the initial state $\sbm{z_{_0}\\
w_{_0}}\in H_\half\times\Dscr(\Nscr)$ satisfy\vspace{-2mm}
\begin{equation} \label{Haaland}
   \left(A_0 z_{_0} + \half B_0 C_0 w_{_0} + \Nscr(w_{_0}) - B_0 
   u(0) \right)\cap H \m\neq\m \emptyset,\vspace{-2mm} 
\end{equation}
then \rfb{Inclusion1} and \rfb{Output1} have a unique classical
solution $(x,u,y)$ such that $x(0)=\sbm{z(0)\\ \dot z(0)}=
\sbm{z_{_0}\\ w_{_0}}$. 

The solution satisfies \vspace{-3mm}
\begin{equation} \label{Well-posed}
  \m\ \ \ \ z\in C^1_r([0,\infty);H_\half),\hspace{5mm} \dot{z}\in 
  C^1_r([0,\infty);H),\vspace{-1mm}
\end{equation}
in particular, $\ddot z(t)\in H$ for all $t\geq 0$. The function $z$
is differentiable in $H_\half$ for almost every $t\geq 0$ and it is
Lipschitz continuous in $H_\half$. The function $\dot{z}$ is
differentiable in $H$ for almost every $t\geq 0$ and it is Lipschitz
continuous in $H$. Moreover, the classical solutions with state
trajectories $x_1=\sbm{z_{_1}\\ \dot{z}_{_1}}$ and \m $x_2=\sbm{z_{_2}
\\ \dot{z}_{_2}}$ emanating from the initial conditions $x_{_{01}}=
\sbm{z_{_{01}}\\ w_{_{01}}}$, \m $x_{_{02}}=\sbm{z_{_{02}}\\ 
w_{_{02}}}\in H_{\half}\times\Dscr(\Nscr)$ with their corresponding
inputs $u_1,u_2\in\Hscr^1((0,\infty);U)$ and outputs $y_1,y_2\in
\Hscr^1((0,\infty);U)$, satisfy the energy balance inequality
\rfb{energy_bal_M}. 

The triples $(z_0,w_0,u)\in H_\half\times\Dscr(\Nscr)\times\Hscr^1
(0,\infty);U)$ that satisfy the compatibility condition \rfb{Haaland}
are dense in $H_\half\times H\times\Uscr$. Thus, by Remark {\rm 
\ref{Solutions}}, the system $\Sigma^\Nscr$ is an incrementally 
scattering passive well-posed system.
\end{theorem}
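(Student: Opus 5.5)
We will realize $\Sigma^\Nscr$ through a nonlinear Lax--Phillips semigroup on $\Yscr\times X\times\Uscr$ and apply the Crandall--Pazy theorem (Theorem \ref{Crandall-Pazy}). Let $\GothA^{\rm L}$ be the generator of the Lax--Phillips semigroup of the conservative linear system \rfb{LinearSys}, as described in Proposition \ref{LaxPhillipsGenTh} and Remark \ref{GothA_equivalent}. Define the set-valued operator $\GothA$ by the same formula, but with the middle component replaced by
\[ \bbm{w_0\\ -A_0z_0-\half B_0C_0w_0-\Nscr(w_0)+B_0u_0(0)} \]
for $x_0=\sbm{z_0\\ w_0}$. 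Its domain $\Dscr(\GothA)$ consists of those $\sbm{y_0\\ x_0\\ u_0}\in\Hscr^1((-\infty,0);U)\times (H_\half\times\Dscr(\Nscr))\times\Hscr^1((0,\infty);U)$ for which the intersection in \rfb{Haaland} (with $u_0(0)$ in place of $u(0)$) is nonempty, and which satisfy $y_0(0)=-C_0w_0+u_0(0)$. The values of $\GothA$ are those selections in which the $H_{-\half}$-valued expression lies in $H$.

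\textbf{Dissipativity.} This is the Lax--Phillips energy computation. For two elements of $\Dscr(\GothA)$, integrating $\langle y_0',y_0\rangle$ and $\langle u_0',u_0\rangle$ gives the boundary terms $\half\|\Delta y_0(0)\|^2$ and $-\half\|\Delta u_0(0)\|^2$. The state part gives
\[ \langle \Delta w_0,\Delta z_0\rangle_\half-\langle A_0\Delta z_0,\Delta w_0\rangle-\half\|C_0\Delta w_0\|^2-\langle \Delta g,\Delta w_0\rangle+\langle \Delta u_0(0),C_0\Delta w_0\rangle , \]
where $\Delta g$ is the difference of the chosen elements of $\Nscr(w_{01})$ and $\Nscr(w_{02})$. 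The first two terms cancel, since $\Delta w_0\in H_\half$ and the pairing is the $H_{-\half}$--$H_\half$ duality. The remaining quadratic terms combine with the boundary terms to give $-\langle\Delta g,\Delta w_0\rangle\le 0$ by monotonicity of $\Nscr$. Hence $\GothA$ is dissipative.

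\textbf{Density and maximality.} This is the step I expect to be the main obstacle. By Minty's theorem (Theorem \ref{Minty}), maximality amounts to solvability of $(I-\GothA)\xi\ni f$ for every $f=\sbm{g\\ h\\ v}$. Eliminate the shift components explicitly. First, $u_0=\int_\cdot^\infty e^{\cdot-s}v(s)\,\dd s$, which fixes $u_0(0)$. Next, $y_0$ is determined once $y_0(0)=-C_0w_0+u_0(0)$ is known, via $y_0(\xi)=e^{\xi}y_0(0)-\int_\xi^0 e^{\xi-s}g(s)\,\dd s$ on $(-\infty,0]$. For the state, $z_0=w_0-h_1$, and the remaining equation is
\[ \big(I+A_0+\half B_0C_0+\Nscr\big)(w_0)\ni h_2-A_0h_1+B_0u_0(0) \]
in $H_{-\half}$. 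The operator $w\mapsto (I+A_0+\half B_0C_0)w$ is bounded, single-valued and strongly monotone (indeed coercive) from $H_\half$ to $H_{-\half}$. It is therefore maximal monotone by Proposition \ref{Hemicont}, after identifying $H_{-\half}$ with the dual of $H_\half$ through the Riesz map (equivalently, working on the Hilbert space $H_\half$ with the operator $A_0^{-1}(\cdots)$). Rockafellar's theorem (Theorem \ref{Rockafellar}) then makes the sum with $\Nscr$ maximal monotone. Coercivity gives surjectivity by Minty's argument applied to $\epsilon I + $ the sum, or directly by the Browder surjectivity result for coercive maximal monotone maps; I would verify this transfer carefully, since $\Nscr$ maps $H_\half$ to $H_{-\half}$ rather than acting on a single pivot space. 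Because the right-hand side lies in $H$ up to $A_0h_1$ terms, one checks that the obtained $w_0$ makes the compatibility intersection nonempty. Density of $\Dscr(\GothA)$ follows from maximal dissipativity together with the density of $\Dscr(\Nscr)$: the closure of the domain is convex, and one approximates using resolvents $(I-\lambda\GothA)^{-1}f\to f$. This also yields the stated density of the triples satisfying \rfb{Haaland}.

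\textbf{Conclusion.} Crandall--Pazy now provides a contraction semigroup $\ULax^\NL$. Theorem \ref{Putin} shows that for data in $\Dscr(\GothA)$ the middle component is Lipschitz and right differentiable with right-continuous right derivative $\GothA^0$, which gives \rfb{Well-posed}. Proposition \ref{Lebesgue} gives a.e. differentiability. The equation for the second component gives \rfb{Inclusion1}, and the boundary condition $y_0(0)$ gives \rfb{Output1}. One must check that $\ULax^\NL$ has the structure \rfb{J_Pasha}: the third component evolves by the left shift, and the first by the bilateral shift fed with the output. Given such data $(z_0,w_0,u)$, one can choose $y_0$ with $y_0(0)$ matching so that the triple lies in $\Dscr(\GothA)$. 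Uniqueness follows because any classical solution yields a trajectory of the Cauchy problem for $\GothA$, which is unique by Theorem \ref{Putin}. Contractivity of $\ULax^\NL$ is exactly \rfb{energy_bal_M}, and Remark \ref{Solutions} completes the proof.
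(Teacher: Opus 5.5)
Your architecture matches the paper's: the same Lax--Phillips generator with domain \rfb{dom_nonli_A}, the same dissipativity computation, Rockafellar plus Minty for the range condition, then Crandall--Pazy and Theorem~\ref{Putin}. Your fallback of working on $H_\half$ through the Riesz map $A_0^{-1}$ is equivalent to the paper's conjugation by $A_0^{-\half}$ onto $H$. In both versions the term $A_0$ supplies the identity that Minty's theorem needs, so no $\e I$ or Browder argument is required. There are two sign slips:
\begin{itemize}
\item the $H_\half$-row of $(I-\GothA)\xi\ni f$ reads $z_0-w_0=h_1$, so $z_0=w_0+h_1$; your $w_0$-inclusion and your compatibility check only work with $+h_1$;
\item the solution of $y-y'=g$ with prescribed $y(0)$ is $y(\xi)=e^\xi y(0)+\int_\xi^0 e^{\xi-s}g(s)\,\dd s$.
\end{itemize}

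The genuine gap is the density of $\Dscr(\GothA)$. Crandall--Pazy requires it as a hypothesis, and the theorem itself asserts it as the density of triples satisfying \rfb{Haaland}. Your resolvent argument is circular. For maximal dissipative $\GothA$, $(I-\lambda\GothA)^{-1}f$ converges as $\lambda\to 0$ to the projection of $f$ onto the closed convex set $\overline{\Dscr(\GothA)}$. So it converges to $f$ exactly when $f$ already lies in $\overline{\Dscr(\GothA)}$. Maximal dissipativity alone does not give a dense domain: minus the subdifferential of the indicator of a closed convex set $K$ is maximal dissipative with domain $K$. You also never actually connect the density of $\Dscr(\Nscr)$ to that of $\Dscr(\GothA)$.

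The missing idea is that the compatibility condition constrains $z$ only modulo $H_1$. Given a target $(y,z,w,u)$:
\begin{itemize}
\item pick $u_\e\in\Hscr^1((0,\infty);U)$ close to $u$, $w_\e\in\Dscr(\Nscr)$ close to $w$ in $H$, and any $\eta_\e\in\Nscr(w_\e)$;
\item set $z_\e=-A_0^{-1}[\half B_0C_0w_\e+\eta_\e-B_0u_\e(0)]+z_1$ with $z_1\in H_1$, so that $A_0z_\e+\half B_0C_0w_\e+\eta_\e-B_0u_\e(0)=A_0z_1\in H$;
\item since $H_1$ is dense in $H_\half$, choose $z_1$ so that $z_\e$ is close to $z$;
\item take $y_\e\in\Hscr^1((-\infty,0);U)$ close to $y$ with $y_\e(0)=-C_0w_\e+u_\e(0)$.
\end{itemize}
This direct construction is what the paper uses. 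Without it, or an equivalent argument, neither the application of Crandall--Pazy nor the final density claim of the theorem is justified.
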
}

{\em Proof.} \m We prove that $\Sigma^\Nscr$ corresponds to a
Lax-Phillips semigroup $\ULax^\Nscr$ acting on $\Yscr\times H_\half
\times H\times \Uscr$, determined by the maximal dissipative operator
\vspace{-1mm}
\begin{equation} \label{Generator}
  \GothA^\Nscr \m=\m \bbm{\left[\frac{\dd}{\dd\xi}\right]_\Yscr & 0 &
  -\delta_0C_0 & \delta_0I\delta^*_0 \\ 0 &  0 & I & 0 \\ 0 & -A_0 & 
  -\half B_0C_0-\Nscr & B_0\delta^*_0\\ 0 & 0 &  0 &\left[\frac{\dd}
  {\dd\xi}\right]_\Uscr}, \vspace{-1mm}   
\end{equation}
with the dense domain 
\begin{equation} \label{dom_nonli_A}
   \Dscr(\GothA^\Nscr) \m=\m \left\{\bbm{y\vspace{2mm}\\ z
   \vspace{2mm}\\ w\vspace{2mm}\\ u}\in \hspace{-2mm}\left.
   \begin{array}{c c c} \Hscr^1((-\infty,0);U)\vspace{-0.5mm}\\ 
   \times\vspace{-0.5mm}\\ H_{\half}\vspace{-0.5mm}\\ 
   \times\vspace{-0.5mm} \\ \Dscr(\Nscr)\vspace{-0.5mm} \\ \times 
   \vspace{-0.5mm}\\ \Hscr^1((0,\infty);U) \end{array}\right| \m\m\m 
   \begin{array}{c c c} \left(A_0z+\half B_0C_0w+\Nscr(w)-B_0u(0)
   \right)\cap H\neq \emptyset, \\  \\y(0)=-C_0w+u(0)\end{array} 
   \hspace{-1mm}\right\}.
\end{equation}

There is an aspect of $\GothA^\Nscr$ that the formula \rfb{Generator}
does not capture: when computing the third entry of
$\GothA^\Nscr\sbm{y & z & w & u}^\top$ according to \rfb{Generator},
we obtain a set in $H_{-\half}$, but we take its intersection with $H$
(which is non-empty by the definition of $\Dscr(\GothA^\Nscr)$). Thus,
$\GothA^\Nscr\sbm{y & z & w & u}^\top $ is a non-empty subset of
$\Yscr\times H_{\half}\times H \times \Uscr$, as it should be. Notice
that if $\Nscr=0$, then the above description of $\GothA^\Nscr$
reduces to \rfb{GothA_LP} if we take $A$, $B$, $\bar{C}$ as in
\rfb{equi_lin0}, \rfb{equi_lin1} and $D=I$.

First we show that $\GothA^\Nscr$ is dissipative. Consider $\sbm{y_1 &
z_1 & w_1 & u_1}^\top$, $\sbm{y_2 & z_2 & w_2 & u_2}^\top\in
\Dscr(\GothA^\Nscr)$, choose $\eta_1\in\Nscr(w_1)$,
$\eta_2\in\Nscr(w_2)$ such that $A_0 z_1+\half B_0 C_0 w_1+\eta_1-B_0
u_1(0)\in H$ and similarly for $\eta_2$. Only such $\eta_1$, $\eta_2$
are legitimate values according to the definition of 
$\Dscr(\GothA^\Nscr)$ and the clarification written after its
formula. Then we evaluate the following inner product on $\Yscr\times
H_{\half}\times H\times \Uscr$:
\[ \hspace{-7cm}\left\langle\GothA^\Nscr\sbm{y_1\\ z_1\\ w_1\\ u_1} 
   -\m\m \GothA^\Nscr\sbm{y_2\\ z_2\\ w_2\\ u_2}, \sbm{y_1\\ z_1\\ 
   w_1\\ u_1}-\sbm{y_2\\ z_2\\ w_2\\ u_2}\right\rangle 
   \vspace{-2mm} \]
\begin{equation} \label{innerprod}
   =\m \left\langle \sbm{\left[\frac{\dd}{\dd\xi}\right]_\Yscr y_1-
   \left[\frac{\dd}{\dd\xi}\right]_\Yscr y_2-\delta_0 C_0 w_1 + 
   \delta_0 C_0 w_2\\ w_1-w_2\\-A_0 z_1+A_0 z_2-\half B_0 C_0 w_1 +
   \half B_0 C_0 w_2\\ \left[\frac{\dd}{\dd\xi}\right]_\Uscr u_1-
   \left[\frac{\dd}{\dd\xi}\right]_\Uscr u_2} + \sbm{\delta_0 u_1(0)
   -\delta_0u_2(0)\vspace{2mm}\\ 0\vspace{2mm}\\ -\eta_1+\eta_2+B_0
   u_1(0)-B_0u_2(0)\vspace{2mm}\\0},\sbm{\vspace{1mm} y_1-y_2 
   \vspace{1mm}\\ z_1-z_2 \vspace{2mm}\\ w_1-w_2 \vspace{2mm}\\ 
   u_1-u_2}\right\rangle.
\end{equation}
According to the definition of the inner product on the space
$H_\half$ and the duality pairing between $H_{-\half}$ and $H_\half$, denoted by $\langle \cdot, \cdot\rangle_{-\half,\half}$, the following holds: \vspace{-2mm}
\[ \hspace{3mm}\langle w_1-w_2, z_1-z_2\rangle_{\half} 
   \m=\m \langle A_0^\half(w_1-w_2), A_0^\half(z_1-z_2)\rangle_H,
   \vspace{-3mm} \]
\[ \hspace{-7mm}\langle A_0(z_1-z_2),w_1-w_2\rangle_{-\half,\half} \m=\m \langle A_0^\half(w_1-w_2), A_0^\half(z_1-z_2)
   \rangle_H \m. \]
Using integration by parts we obtain that \vspace{-2mm} 
\[ \left\langle\left[\frac{\dd}{\dd\xi}\right]_\Uscr(u_1-u_2),\m 
   u_1-u_2\right\rangle_\Uscr=-\half\|u_1(0)-u_2(0)\|^2. \]
Moreover, using \rfb{Kirsten_Morris} and the last condition in the
definition of \m $\Dscr(\GothA^\Nscr)$, we get
\begin{equation}\label{derivativeY} 
\left[\frac{\dd}{\dd \xi}\right]_\Yscr y_1-\delta_0 C_0 w_1 +
   \delta_0 u_1(0) \m=\m y_1', \quad \left[\frac{\dd}{\dd \xi}
   \right]_\Yscr y_2-\delta_0 C_0 w_2+\delta_0 u_2(0) \m=\m y_2'\m.
\end{equation}
%According to \rfb{Kirsten_Morris},
%\[\left[\frac{\dd}{\dd \xi}\right]_\Yscr y_1=y_1'-\delta_0\left(-C_0w_1+u_1(0)\right)\m,\m\m\m\m \left[\frac{\dd}{\dd \xi}\right]_\Yscr y_2 = y_2'-\delta_0\left(-C_0w_2+u_2(0)\right)\m. \]
Using these last five formulas we get that \rfb{innerprod} implies
\[ \hspace{-8mm}\left\langle\GothA^\Nscr\sbm{y_1\\ z_1\\ w_1\\ u_1}
   - \GothA^\Nscr \sbm{y_2\\ z_2\\ w_2\\ u_2},\sbm{y_1\\ z_1\\ w_1\\ 
   u_1}-\sbm{y_2\\ z_2\\ w_2\\ u_2}\right\rangle \m=\m \half \|y_1(0)-y_2(0)\|^2-\half \|u_1(0)-u_2(0)\|^2 \]
\[ -\langle \eta_1-\eta_2, w_1-w_2\rangle_{-\half,\half}+\left\langle B_0\left[{\scriptstyle-\half} C_0(w_1-w_2)+u_1(0)-u_2(0)\right],w_1-w_2\right\rangle_{-\half,\half}\m.\]
Using the fact that $B_0^*=C_0$, we move $B_0$ to the right side of the last inner product as $C_0$. We know from \rfb{dom_nonli_A} that $C_0w_1=u_1(0)-y_1(0)$ and $C_0w_2=u_2(0)-y_2(0)$, thus, the above equation becomes 
\[ \hspace{-8mm}\left\langle\GothA^\Nscr\sbm{y_1\\ z_1\\ w_1\\ u_1}
   - \GothA^\Nscr \sbm{y_2\\ z_2\\ w_2\\ u_2},\sbm{y_1\\ z_1\\ w_1\\ 
   u_1}-\sbm{y_2\\ z_2\\ w_2\\ u_2}\right\rangle \m=\m \half \|y_1(0)-y_2(0)\|^2-\half \|u_1(0)-u_2(0)\|^2 \]
\[\hspace{-2mm}-\langle \eta_1-\eta_2, w_1-w_2\rangle_{-\half,\half}+\half\langle u_1(0)-u_2(0)+y_1(0)-y_2(0),u_1(0)-u_2(0)-y_1(0)+y_2(0)\rangle_U\]
\[\hspace{38mm}=-\langle \eta_1-\eta_2, w_1-w_2\rangle_{-\half,\half}\leq 0\m.\]
Using the monotonicity of $\Nscr$, we have proved that the nonlinear (possibly multi-valued)
operator $\GothA^\Nscr$ is dissipative.

Now we show that ${\rm Ran}(I-\GothA^\Nscr)=\Yscr\times H_\half\times
H\times\Uscr$. Considering an arbitrary $\sbm{p & f & g & v}^\top$ in
$\Yscr\times H_{\half}\times H\times \Uscr$, the aim is to show that
the inclusion \vspace{-3mm}
\begin{equation} \label{Onto}
  \left(I-\GothA^\Nscr\right)\bbm{y\\ z\\ w\\ u} \m\ni\m 
  \bbm{p \\ f \\ g \\ v} \vspace{-1mm}
\end{equation}
has solutions $\sbm{y & z & w & u}^\top\in\Dscr(\GothA^\Nscr)\subset
\Hscr^1((-\infty,0);U)\times H_{\half}\times\Dscr(\Nscr)\times
\Hscr^1((0,\infty);U)$. The inclusion \rfb{Onto} can be decomposed
into the following set of equations and an inclusion: \vspace{-2mm}
\begin{equation} \label{Onto_de}
  \left\{\begin{array}{ll} y-\left[\frac{\dd y}{\dd \xi}
  \right]_\Yscr+\delta_0C_0w-\delta_0u(0) \m=\m p, \\[2mm] z-w \m=\m
  f,\\[1mm] A_0 z+w + \half B_0 C_0 w+\Nscr(w)-B_0 u(0)\ni g,\\[1mm] 
  u - \left[ \frac{\dd u}{\dd\xi} \right]_\Uscr \m=\m v.
  \end{array}\right.
\end{equation}
The last part of \rfb{Onto_de} is $\left(I-\left[\frac{\dd}{\dd \xi}
\right]_\Uscr\right) u=v$. Recall that $\left[\frac{\dd}{\dd\xi}
\right]_\Uscr$ is the generator of the left-shift semigroup on
$\Uscr$, with the domain $\Hscr^1((0,\infty);U)$. It is well known
that this semigroup is contractive, hence according to the
Lumer-Phillips theorem (see \cite[Sect.~3.8]{obs_book}) its generator
is maximally dissipative. Consequently, ${\rm Ran}\left(I-\left[
\frac{\dd}{\dd\xi}\right]_\Uscr\right)=\Uscr$ and for any $v\in\Uscr$
there exists a unique $u\in \Hscr^1((0;\infty);U)$ such that the last
line of \rfb{Onto_de} holds.

Substituting $z=f+w$ into the third part of \rfb{Onto_de} and
rearranging, we obtain \vspace{-1.5mm}
\[ \left(A_0+\half B_0 C_0\right)w + (I+\Nscr)(w) \m\ni\m g-A_0
   f+B_0 u(0), \vspace{-0.5mm} \]
where the right-hand side is a given vector in $H_{-\half}$, and we
are searching for $w\in \Dscr(\Nscr)$. Introduce $\tilde{w}=
A_0^{\half}w\in H$ and $\tilde{g}=A_0^{-\half}(g-A_0f+B_0u(0))\in H$.
On applying $A_0^{-\half}$ on both sides of the above inclusion, we
obtain \vspace{-1mm}
\begin{equation} \label{Inclusion3}
  A_0^{-\half}\left(A_0+\half B_0 C_0\right) A_0^{-\half}\tilde{w}
  + A_0^{-1}\tilde{w}+A_0^{-\half}\Nscr \left(A_0^{-\half}\tilde{w}
  \right) \m\ni\m \tilde{g}.\vspace{-1.5mm}
\end{equation}
Note that $A_0^{-\half}$ is an isomorphism from $H$ to $H_\half$ and
also from $H_{-\half}$ to $H$. Hence the operator
$\tilde{\Nscr}=A_0^{-\half}\Nscr A_0^{-\half}$ is maximal monotone on
$H$, with the domain \m $\Dscr(\tilde{\Nscr})=A_0^{\half}\Dscr(\Nscr)$.
Using Rockafellar's theorem (Sect.~\ref{sec2}) we obtain that
$A_0^{-1}+\tilde{\Nscr}$ is a maximal monotone mapping from $H$ to
$H$. The inclusion \rfb{Inclusion3} can be rewritten as \vspace{-1mm}
\begin{equation} \label{Wolves}
   \left(I+\half A_0^{-\half} B_0 C_0 A_0^{-\half}+A_0^{-1} +
   \tilde{\Nscr}\right)\tilde{w} \m\ni\m \tilde{g}.\vspace{-2mm}
\end{equation} 
Since $A_0^{-\half}B_0$ is a bounded mapping from $U$ to $H$, its
adjoint $C_0A_0^{-\half}$ is a bounded mapping from $H$ to $U$.
Therefore, $A_0^{-\half}B_0C_0A_0^{-\half}$ is a bounded and positive
(hence maximal monotone) mapping from $H$ to $H$. Invoking again
Rockafellar's theorem, we obtain that the combined operator $\half
A_0^{-\half}B_0 C_0 A_0^{-\half} + A_0^{-1} + \tilde{\Nscr}$ is 
maximal monotone on $H$. Then, following Minty's theorem (see
Sect.~\ref{sec2}) we conclude that \vspace{-2mm}
\[ {\rm Ran} \left(I+\half A_0^{-\half} B_0 C_0 A_0^{-\half} + 
   A_0^{-1}+\tilde{\Nscr}\right) \m=\m H , \vspace{-0.5mm} \]
and the inclusion \rfb{Wolves} (and hence also \rfb{Inclusion3}) has a
unique solution $\tilde{w}\in\Dscr(\tilde{\Nscr})$ for each $\tilde{g}
\in H$. Consequently, $w=A_0^{-\half}\tilde{w}$ and $z=f+w$ are unique
solutions of the second and third parts of \rfb{Onto_de}. Now the
existence and uniqueness of a solution $y$ of the first part of
\rfb{Onto_de} remains to be verified. For this, it will be helpful to
recall from \rfb{Kirsten_Morris} that for any 
$y\in\Hscr^1((-\infty,0);U)$, \vspace{-1mm}
\begin{equation} \label{Ragnick}
   \left[ \frac{\dd y}{\dd \xi}\right]_\Yscr \m=\m \frac{\dd y}
   {\dd \xi}-\delta_0 y(0).\vspace{-1mm}
\end{equation}

From the definition of the domain of $\Dscr(\GothA^\Nscr)$ as in 
\rfb{dom_nonli_A}, we know that $y(0)=-C_0 w+u(0)$. Hence on 
substituting this equation into the first part of \rfb{Onto_de}, we
obtain $y-\frac{\dd y}{\dd \xi}=p$. Then $y\in\Hscr^1((-\infty,0);U)$
can be obtained by solving this differential equation for the given
$p\in L^2((-\infty,0];U)$ and for a known $y(0)=-C_0w+u(0)$. The 
solution is \vspace{-2mm}
\[ y(\xi) \m=\m e^\xi y(0)-\int^\xi_{0} e^{\xi-\sigma}p(\sigma)\dd
   \sigma \quad\quad \FORALL \xi\leq 0.\vspace{-2mm} \]
Thus, for any $\sbm{p & f & g & v}^\top\in\Yscr\times H_\half\times H
\times\Uscr$, the inclusion \rfb{Onto} has a unique solution \\
$\sbm{y & z & w & u}^\top\in \Dscr(\GothA^\Nscr)$, i.e., the nonlinear
operator $I-\GothA^\Nscr$ is onto. Since $\GothA^\Nscr$ is
dissipative, by Minty's theorem we conclude that $\GothA^\Nscr$ is
maximally dissipative.

We claim that $\Dscr(\GothA^\Nscr)$ is dense in $\Yscr\times H_\half
\times H\times \Uscr$. Consider $\sbm{y & z & w & u}^\top\in\Yscr
\times H_\half\times H\times \Uscr$ and an arbitrary $\e>0$. It is
clear that there exists $u_\e\in\Hscr^1((0,\infty);U)$ such that
$\|u_\e-u\|_\Uscr\leq\frac{\epsilon}{2}$. Since $\Dscr(\Nscr)$ is
dense in $H_\half$ (by assumption), it is also dense in $H$, hence we
can find $w_\e\in\Dscr(\Nscr)$ such that $\|w_\e-w\|_H\leq\frac{\e}
{2}$. Take $\eta_\epsilon\in \Nscr(w_\epsilon)$ and denote 
\vspace{-2mm}
\[ z_0 \m=\m -A_0^{-1}\left[ \half B_0 C_0 w_\e + \eta_\e - B_0
   u_\e(0) \right], \]
so that $z_0\in H_\half$. Then for any $z_1\in H_1$, we have
\vspace{-3mm}
\begin{equation} \label{Stella}
   A_0(z_0+z_1) + \half B_0 C_0 w_\e + \eta_\epsilon - B_0 
   u_\e(0) \m=\m A_0 z_1\in H.\vspace{-1mm}
\end{equation}
Since $H_1$ is dense in $H_\half$, we can choose $z_1$ such that, by
denoting $z_\e=z_0+z_1$, we have that \m $\|z_\e-z\|_{\half}\leq
\frac{\e}{2}$. We see from \rfb{Stella} that $(z_\e, w_\e,u_\e)$
satisfy the nonempty intersection condition in \rfb{dom_nonli_A}. It
remains to find $y_\e$ close to $y$. It is not difficult to verify
that the space $\Hscr^1((-\infty,0);U)$ is dense in $\Yscr$, even if we
impose a fixed boundary value at $\xi=0$. Thus, we can find some $y_\e
\in\Hscr^1((-\infty,0);U)$ satisfying $y_\e(0)=-C_0w_\e+u_\e(0)$ such
that $\|y_\e-y\|_\Yscr\leq\frac{\e}{2}$. Thus, $\sbm{y_\e & z_\e & 
w_\e & u_\e}^\top\in\Dscr(\GothA^\Nscr)$ and \vspace{-2mm}
\[ \left\|\bbm{y_\e\\ z_\e\\ w_\e\\ u_\e}-\bbm{y\\ z\\ w\\ u}\right\|
   ^2= \|y_\e-y\|^2_\Yscr + \|z_\e-z\|^2_{\half} + \|w_\e-w\|^2_H +
   \|u_\e-u\|^2_\Uscr \leq \e^2.\vspace{-2mm} \]
This shows that $\Dscr(\GothA^\Nscr)$ is dense in 
$\Yscr\times H_\half\times H\times\Uscr$.

According to the Crandall-Pazy theorem (Sect.~\ref{sec2}) there is a
unique strongly continuous semigroup of nonlinear contraction
operators $\ULax^\Nscr$ such that $\GothA^\Nscr_0$ is the generator of
$\ULax^\Nscr$, where $\GothA^\Nscr_0\sbm{y & z & w & u}^\top$ is the
element of smallest norm in $\GothA^\Nscr\sbm{y & z & w & u}^\top$ for
every $\sbm{y & z & w & u}^\top\in\Dscr(\GothA^\Nscr)$. Let $\sbm{y_t
& z_t & w_t & u_t}^\top=\ULax^\Nscr_t\sbm{y_0 & z_0 & w_0 & u}^\top$
(for all $t\geq 0$), where $\sbm{y_0 & z_0 & w_0 & u}^\top\in\Dscr
(\GothA^\Nscr)$. It follows from Theorem \ref{Putin} that $\sbm{y_t &
z_t & w_t & u_t}^\top\in\Dscr(\GothA^\Nscr)$ for all $t\geq 0$, and
the function $t\rarrow\sbm{y_t & z_t & w_t & u_t}^\top$ is Lipschitz
continuous and right differentiable in $\Yscr\times H_\half\times H
\times\Uscr$, for $t\in[0,\infty)$. Moreover, the right derivative of
this function is right continuous and its norm (of the derivative) is
nonincreasing. From the second row of the formula of $\GothA^\Nscr$ we
see that $w_t=\dot z_t$. Thus we obtain that $z\in C^1_r([0,\infty);
H_\half)$ and $\dot{z}\in C^1_r([0,\infty);H)$. The statements about
differentiability almost everywhere follow from Proposition 
\ref{Lebesgue}. Now from the first and third rows of the formula of 
$\GothA^\Nscr$ we see that, if we denote $x(t)=\sbm{z_t\\ \dot z_t}$ 
and $y(t)=-C_0\dot z_t+u(t)$, then $(x,u,y)$ is a classical solution
of \rfb{Inclusion1} and \rfb{Output1} and 
$y_t=\Sscr_{-t}(y_0+\PPP_t y)$. 

We can now define the operators $\Sigma^{\rm st}_t$ and $\Sigma^{\rm 
out}_t$ (for $t\geq 0$) such that $x(t)=\Sigma^{\rm st}_t\sbm{x(0)\\
u}$ and $\PPP_t y=\Sigma^{\rm out}_t\sbm{x(0)\\ u}$. These operators
can be defined on all of $X\times\Uscr$ by density and continuous
extension. Now we see that $\ULax^\Nscr_t$ has the structure
\rfb{J_Pasha}, so that it determines a well-posed system
$\Sigma^\Nscr$ (that consists of the operator families $\Sigma^{\rm
st}$ and $\Sigma^{\rm out}$).

We have proved above that $\ULax^\Nscr$ is a contraction semigroup on
$\Yscr\times H_\half\times H\times\Uscr$, therefore for the initial
conditions $\sbm{0 & x_{01} & u_1}^\top,\ \sbm{0 & x_{02} & u_2}^\top
\in\Yscr\times X\times\Uscr$, we have that
\[ \left\|\ULax^\Nscr_\tau \bbm{0\\ x_{01}\\ u_1} - \ULax^\Nscr_\tau
   \bbm{0\\ x_{02}\\ u_2}\right\| \m\leq\m \left\| \bbm{0\\ x_{01}\\ 
   u_1} - \bbm{0\\ x_{02}\\ u_2}\right\| \FORALL \tau\geq 0.\] 
Remember the concepts of state trajectories and output functions of a
well-posed system from Sect.~\ref{sec4}. Let $x_1,x_2$ be state 
trajectories of \m $\Sigma^\Nscr$, as follows: \m $x_1(\tau)=\Sigma
^{\rm st}_\tau\sbm{x_{01}\\ u_1}$, \m $x_2(\tau)=\Sigma^{\rm st}_\tau
\sbm{x_{02}\\ u_2}$. Let $y_{\tau 1}$ and $y_{\tau 2}$ be output
functions of \m $\Sigma^\Nscr$ on the interval $[0,\tau]$, as 
follows: $y_{\tau 1}=\Sigma^{\rm out}_\tau \sbm{x_{01}\\ u_1}$, 
$y_{\tau 2}=\Sigma^{\rm out}_\tau\sbm{x_{02}\\ u_2}$. Then the last
estimate can be rewritten as \vspace{-1mm}
\[ \int_0^\tau \|y_{\tau 1}(t)-y_{\tau 2}(t)\|_U^2\dd t + \|x_1(\tau)
   - x_2(\tau)\|_X^2 + \int_\tau^\infty \|u_1(t)-u_2(t)\|_U^2\dd t
   \vspace{-2mm}\]
\[ \m\hspace{2cm}\leq\m \|x_{01}-x_{02}\|_X^2 + \int_0^\infty
   \|u_1(t)-u_2(t)\|_U^2\dd t \m.\]
Here we have used that the first component of $\ULax^\Nscr_\tau[0\
x_{01}\ u_1]^\top$ is $\Sscr_{-\tau}y_{\tau 1}$, similarly for
$x_{02}$ and $u_2$, and that $\Sscr_{-\tau}$ is unitary. Equivalently,
our estimate becomes \rfb{energy_bal_M}. Therefore, $\Sigma^\Nscr$ is 
an incrementally scattering passive well-posed system. $\quad\square$

\smallskip

\begin{remark} \label{Messi}
Note that if $(x,u,y)$ is a classical solution of \rfb{Inclusion1}
and \rfb{Output1}, as described in Theorem \ref{Main}, then the fact
that $\ddot{z}(t)\in H$ for all $t\geq 0$ implies that the
compatibiltiy condition \rfb{Haaland} remains valid along the entire
solution: \vspace{-1mm}
\[ \left(A_0 z(t) + \half B_0 C_0\dot{z}(t) + \Nscr(\dot{z}(t)) - 
   Bu(t)\right)\cap H \m\neq\m \emptyset \quad \FORALL t\geq 0.
   \vspace{-1mm} \] 
\end{remark}

\begin{remark} \label{Barbu}
In \cite{Barbu}, V.~Barbu has investigated evolution equations that
are closely related to \rfb{Inclusion1}. Consider the real Hilbert
spaces $V$, $H$, where $V$ is dense and continuously embedded in $H$,
such that together with the dual space $V'$, we have the Gelfand
triple $V\subset H\subset V'$. The Cauchy problem examined in
\cite[Sect.~5.6]{Barbu} is \vspace{-1mm}
\begin{equation} \label{Bar}
   \left\{\begin{array}{ll} \ddot{z}(t) + A z(t) + B(\dot{z}(t)) 
   \m\ni\m f(t)\quad \FORALL t\in [0,T],\\ [2mm] z(0)\m =\m z_0, 
   \quad \dot{z}(0) \m=\m w_0,\end{array}\right. \vspace{-1mm}
\end{equation}
where $A$ is a positive operator from $V$ to $V'$, and $B$ is a
nonlinear (possibly set-valued) maximal monotone operator from $V$ to
$V'$ (its domain is contained in $V$). In \cite[Sect.~5.6]{Barbu} it
is shown that if the input function $f\in W^{1,1}([0,T];H)$ and the
initial conditions $z(0)=z_0\in V$ and $\dot{z}(0)=w_0\in \Dscr(B)$
are such that $(Az_0+Bw_0)\cap H\neq\emptyset$, then \rfb{Bar} has a
solution $z$ with the smoothness \vspace{-2mm}
\[ z\in W^{1,\infty}([0,T];V)\cap W^{2,\infty}([0,T];H)
   \vspace{-0.3mm}.\]
To facilitate the comparison between our results and those in
\cite{Barbu}, we give a table showing how the notation here is related
to the notation in \cite{Barbu}. \vspace{1mm}
\begin{center}\begin{tabular}{|c|c|c|c|c|c|} 
 \hline
 Barbu's notation & $V$ & $V'$ & $A$ & $B$ & $f$ \\  
 \hline 
 our notation & $H_\half$ & $H_{-\half}$ & $A_0$ & $\half B_0 C_0 +
 \Nscr $ & $B_0u$ \\
 \hline
\end{tabular}
\vspace{1mm}
\end{center}
In \cite{Barbu}, the input function $f$ is assumed to be in $W^{1,1}
([0,T];H)$ (i.e., absolutely continuous), while we consider $B_0 u$ to
be in the larger space $W^{1,2}((0,\infty);V')$, i.e., $B_0 u\in
\Hscr^1((0,\infty);H_{-\half})$. Thus, our control operator $\sbm{0\\
B_0}$ may be unbounded, while the hypotheses in \cite{Barbu} imply
that the control operator is $\sbm{0 \\ I}$, which maps boundedly into
the state space $V\times H$. In \cite{Barbu}, the solution $z$ of
\rfb{Bar} is Lipschitz continuous with values in $V$ (or $H_\half$ in
our notation) and $\dot{z}$ is Lipschitz continuous with values in
$H$. This is consistent with our definition of the classical solutions
of \rfb{Inclusion1}. Our formulation of the nonlinear system
(described by \rfb{Inclusion1} and \rfb{Output1}) allows unbounded
(for instance, boundary) observation, while in \cite{Barbu}, outputs
are not considered and only the state trajectories of the evolution
equation \rfb{Bar} are investigated.
\end{remark} 

%%%%%%%%%%**********%%%%%%%%%%**********%%%%%%%%%%**********%%%%%%%%%%
\section{Systems with nonlinear boundary damping} \label{sec6}
% Section 6

In this section we investigate the well-posedness of a class of
nonlinear systems with boundary control and boundary observation,
perturbed by a nonlinear damping term acting on the boundary. Recall
the concept of hemi-continuous operator, defined in Sect.~\ref{sec2}.
Recall also that in this article, by $\ddot{z}$ and $\dot{z}$ we mean
the right second derivative and the right derivative of $z$,
respectively. Using the assumptions and notation of Sect.~\ref{sec1}
and Sect.~\ref{sec3}, in particular the space $Z_0$ from \rfb{Matty}, 
we now consider nonlinear systems described by the second order
differential equation \vspace{-2mm}
\begin{equation} \label{BDamp}
  \ddot{z}(t) + \half B_0 C_0 \dot{z}(t) + B_0\Nscr_0 \left( C_0
  \dot{z}(t)\right)+A_0z(t) \m=\m B_0 u(t), \vspace{-2mm}
\end{equation}
and the output equation \vspace{-3mm}
\begin{equation} \label{OutputBD}
  y(t) \m=\m -C_0\dot{z}(t) + u(t) \vspace{-1.2mm},
\end{equation}
where $\Nscr_0:U\rarrow U$ is a single-valued monotone hemi-continuous
nonlinear operator with $\Dscr(\Nscr_0)=U$. Notice that \rfb{BDamp} is
similar to \rfb{Inclusion1} with $B_0\Nscr_0\left(C_0\dot{z}(t)
\right)$ replacing $\Nscr(\dot{z}(t))$ and with an equality instead of
the inclusion (as $\Nscr_0$ is a single-valued operator).

The proposition below shows how systems described by \rfb{BDamp} and
\rfb{OutputBD} may arise from systems with boundary control and
boundary observation. The operators $G_0$ and $C_0$ that will appear
here are typically boundary trace operators, so that the damping
operator $\Nscr_0$ appears in the equations describing what happens on
the boundary.

This proposition is the nonlinear extension of Theorem 1.4 in 
\cite{WeTu03}.

{\color{blue}
\begin{proposition} \label{CR7}
On the Hilbert space $Z_0$ (as defined in Sect.{\rm\ref{sec3}}),
assume that there exists an operator $G_0\in\Lscr(Z_0,U)$ such that
$G_0 H_1=\{0\}$ and $G_0\m A_0^{-1}B_0=I$. For all $z\in Z_0$, define
$L_0z=A_0 z-B_0 G_0 z$ on $Z_0$ (here we have used the extension of 
$A_0$ to $H_\half$). Then ${\rm Ker}(G_0)=H_1$, $L_0\in\Lscr(Z_0,H)$
and the system $\Sigma^\Nscr$ defined by the differential equation
\rfb{BDamp} and the output \rfb{OutputBD} can also be described by the
equations \vspace{-1mm}
\begin{equation} \label{BSC}
   \left\{\begin{array}{ll} \ddot{z}(t)+L_0z(t)=0,\vspace{1mm}\\ 
   G_0 z(t) + \half C_0 \dot{z}(t) + \Nscr_0 \left( C_0\dot{z}(t)
   \right) \m=\m u(t),\vspace{1mm}\\ G_0 z(t) - \half C_0 \dot{z}(t)
   + \Nscr_0 \left( C_0\dot{z}(t) \right) \m=\m y(t),
   \end{array} \right.\vspace{-2mm}
\end{equation} 
in the following sense:
\begin{itemize}
\item[{\rm (1)}] If $u\in\Hscr^1((0,\infty);U)$, $z_0\in Z_0$ and $w_0
\in H_\half$, then the compatibility condition \vspace{-4mm}
\[ G_0z_0+\half C_0w_0+\Nscr_0\left(C_0w_0\right)=u(0)\vspace{-3mm}\]
is equivalent to \rfb{Haaland} with $\Nscr=B_0\Nscr_0 C_0$, or in 
other words\vspace{-2mm}
\[ A_0 z_0+\half B_0 C_0 w_0 + B_0\Nscr_0\left( C_0 w_0\right) -B_0
   u(0)\in H.\vspace{-2mm}\] 
\item[{\rm (2)}] If $z(t)\in Z_0$, $\dot z(t)\in H_\half$ and $\ddot
z(t)\in H$, then \rfb{BSC} is equivalent to \rfb{BDamp} and
\rfb{OutputBD}. \end{itemize}
\end{proposition}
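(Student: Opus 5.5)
The plan is to reduce everything to a single algebraic fact: the decomposition $Z_0=H_1+A_0^{-1}B_0U$ is \emph{direct}, and $G_0$ is exactly the projection onto the $U$-component. All three claims then follow by splitting $A_0z$ into an $H$-part plus a $B_0(\cdot)$-part.

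\emph{Step 1: the kernel of $G_0$ and the uniqueness of the decomposition.} Take $z\in Z_0$ and write $z=z_1+A_0^{-1}B_0v$ with $z_1\in H_1$ and $v\in U$. Since $G_0H_1=\{0\}$ and $G_0A_0^{-1}B_0=I$, we get $G_0z=v$. Hence $v$ is uniquely determined by $z$, namely $v=G_0z$, and then $z_1=z-A_0^{-1}B_0G_0z$ is also uniquely determined. If $G_0z=0$, then $v=0$ and so $z=z_1\in H_1$. Together with the hypothesis $G_0H_1=\{0\}$, this gives ${\rm Ker}(G_0)=H_1$. We also record the consequence we will use repeatedly: if $v\in U$ and $B_0v\in H$, then $A_0^{-1}B_0v\in H_1$, so $v=G_0A_0^{-1}B_0v=0$.

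\emph{Step 2: $L_0\in\Lscr(Z_0,H)$.} Using the unique decomposition from Step 1,
\[ L_0z \m=\m A_0z_1+B_0v-B_0G_0z \m=\m A_0z_1\in H. \]
Since the decomposition is unique, the infimum in \rfb{Matty} is attained at it, so $\|L_0z\|_H=\|z_1\|_{H_1}\leq\|z\|_{Z_0}$. This proves boundedness.

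\emph{Step 3: the compatibility condition, part (1).} For $z_0\in Z_0$ and $w_0\in H_\half$, write $A_0z_0=L_0z_0+B_0G_0z_0$. This gives
\[ A_0 z_0+\half B_0 C_0 w_0 + B_0\Nscr_0(C_0 w_0) -B_0u(0) \m=\m L_0z_0 + B_0\bigl[G_0z_0+\half C_0w_0+\Nscr_0(C_0w_0)-u(0)\bigr]. \]
Because $\Nscr=B_0\Nscr_0C_0$ is single-valued, \rfb{Haaland} just says that the left-hand side lies in $H$. Since $L_0z_0\in H$, this holds if and only if $B_0[\cdots]\in H$. By the observation at the end of Step 1, that is equivalent to the bracket being zero, which is the boundary compatibility condition.

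\emph{Step 4: the dynamics, part (2).} The same identity applied to $z(t)$ gives
\[ \ddot z(t)+\half B_0C_0\dot z(t)+B_0\Nscr_0(C_0\dot z(t))+A_0z(t)-B_0u(t) \m=\m \ddot z(t)+L_0z(t)+B_0\bigl[G_0z(t)+\half C_0\dot z(t)+\Nscr_0(C_0\dot z(t))-u(t)\bigr]. \]

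\emph{From \rfb{BDamp} to \rfb{BSC}.} If \rfb{BDamp} holds, the left side is zero. Then $B_0[\cdots]=-(\ddot z(t)+L_0z(t))\in H$, because $\ddot z(t)\in H$ and $L_0z(t)\in H$. So the bracket vanishes, which is the second line of \rfb{BSC}, and consequently $\ddot z+L_0z=0$, the first line. Substituting $u(t)=G_0z(t)+\half C_0\dot z(t)+\Nscr_0(C_0\dot z(t))$ into \rfb{OutputBD} gives the third line of \rfb{BSC}.

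\emph{From \rfb{BSC} to \rfb{BDamp}.} Conversely, if \rfb{BSC} holds, the right side of the identity above is zero, which gives \rfb{BDamp}. Subtracting the third line of \rfb{BSC} from the second gives $u-y=C_0\dot z$, which is \rfb{OutputBD}.

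The only non-routine point is Step 1: the directness of the sum defining $Z_0$, and the implication $B_0v\in H\Rightarrow v=0$. Everything else is bookkeeping. The hypotheses $\ddot z(t)\in H$ and $z(t)\in Z_0$ are exactly what make the expressions $L_0z(t)$ and $G_0z(t)$ meaningful.
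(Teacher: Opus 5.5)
Your proof is correct and follows essentially the same route as the paper. The unique decomposition $z=z_1+A_0^{-1}B_0G_0z$ gives ${\rm Ker}(G_0)=H_1$, and parts (1) and (2) both come from the identity $A_0=L_0+B_0G_0$ together with the fact that $G_0A_0^{-1}$ annihilates $H$; you package the latter as $B_0v\in H\Rightarrow v=0$. The only real difference is in the boundedness of $L_0$: you read $L_0\in\Lscr(Z_0,H)$, with $\|L_0\|\leq 1$, directly off $L_0z=A_0z_1$ and the norm \rfb{Matty}, which is essentially the computation of Remark~\ref{z_norm}. The paper instead shows $A_0^{-1}L_0z\in{\rm Ker}(G_0)$ and then invokes the closed graph theorem.
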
}

{\em Proof.} \m If $z\in Z_0$ then it follows from the definition of
$Z_0$ that $z=z_1+A_0^{-1}B_0v$, where $z_1\in H_1$ and $v\in U$.
Applying $G_0$ on both sides we obtain\vspace{-2mm}
\[ G_0 z \m=\m G_0 z_1+G_0 A_0^{-1} B_0 v \m=\m v \m.\vspace{-2mm}\]
Therefore, ${\rm Ker}(G_0)=H_1$. On applying $G_0A_0^{-1}$ on both 
sides of the definition of $L_0z$, we have \vspace{-2mm}
\[ G_0 A_0^{-1} L_0z \m=\m G_0 A_0^{-1} A_0 z - G_0 A_0^{-1} B_0 
   G_0 z \m=\m G_0 z - G_0 z \m=\m 0.\vspace{-1mm}\]
Hence, $A_0^{-1}L_0z\in {\rm Ker}(G_0)=H_1$, and it follows that
$L_0z\in H$. Since $L_0\in \Lscr(Z_0, H_{-\half})$, by the closed
graph theorem we obtain that $L_0\in\Lscr(Z_0,H)$. 

Now we prove part $(1)$ of the proposition. If the condition
\vspace{-2mm}
\[ A_0 z_0 + \half B_0 C_0 w_0 + B_0\Nscr_0 \left( C_0 w_0\right)
   - B_0 u(0) \in H \vspace{-2mm}\] 
holds, then clearly $z_0\in Z_0$. Applying $G_0A_0^{-1}$ on both 
sides, we obtain \vspace{-2mm}
\[ \hspace{-0.5cm}G_0 z_0 + \half C_0 w_0 + \Nscr_0 \left( C_0 w_0
   \right) - u(0) \m=\m 0,\vspace{-1mm}\]
since $G_0H_1=\{0\}$. Conversely, if \m $G_0z_0+\half C_0 w_0 + 
\Nscr_0\left(C_0w_0\right)=u(0)$ holds, then on applying $B_0$ on 
both sides we obtain \vspace{-2mm}
\[ B_0 G_0 z_0 + \half B_0 C_0 w_0 + B_0\Nscr_0 \left( C_0 w_0 
   \right) \m=\m B_0 u(0).\vspace{-1mm}\] 
Using the definition of $L_0$, we have that $B_0 G_0 z_0=A_0 z-
L_0 z_0$, thus \vspace{-1mm}
\begin{equation} \label{Schengen}
   A_0 z_0 + \half B_0 C_0 w_0 + B_0 \Nscr_0 \left( C_0 w_0\right)
   - B_0 u(0) \m=\m L_0 z_0 \m\in\m H \m.\vspace{-1mm}
\end{equation}

We move on to part (2). To show that \rfb{BDamp} and \rfb{OutputBD}
imply \rfb{BSC}, first notice that, due to $\ddot z(t)\in H$,
\rfb{BDamp} implies $A_0z(t)+\half B_0 C_0\dot z(t)+B_0\Nscr_0(C_0\dot
z(t))- B_0 u(t)\in H$. By a reasoning similar to the one used to
derive \rfb{Schengen}, we can show that actually, \m $A_0z(t)+\half 
B_0 C_0\dot z(t)+B_0\Nscr_0(C_0\dot z(t))-B_0 u(t)=L_0 z(t)$. This
with \rfb{BDamp} implies the first equation in \rfb{BSC}.

Now apply $G_0 A_0^{-1}$ on both sides of \rfb{BDamp}, use that
$\ddot z(t)\in H$, so that $G_0 A_0^{-1}\ddot z(t)=0$, and also use 
that $G_0 A_0^{-1}B_0=I$. This shows that the second equation in 
\rfb{BSC} holds. Combining this with \rfb{OutputBD}, we obtain the 
third equation in \rfb{BSC}.

To show that \rfb{BSC} implies \rfb{BDamp}, use the first equation in
\rfb{BSC} combined with \rfb{Schengen}. Finally, subtracting the
second equation in \rfb{BSC} from the third, we get \rfb{OutputBD}. 
$\quad\square$

\smallskip

\begin{remark}\label{z_norm}
Under the assumptions of Proposition \ref{CR7}, the norm on $Z_0$
(defined in \rfb{Matty}) can be expressed also as
follows: \vspace{-2mm}
\begin{equation} \label{fusion}
   \|z\|^2_{Z_0} \m=\m \|L_0z\|^2_H+\|G_0z\|^2_U.
\end{equation} 
Indeed for a given $z\in Z_0$, a unique $v\in U$ can be determined
using the formula $G_0 z=v$ (as shown in the beginning of the above
proof). For $z=z_1+A^{-1}_0B_0v$, the norm on $Z_0$ is
\[ \|z\|^2_{Z_0} \m=\m \|z_1\|^2_{H_1}+\|v\|^2_U \m=\m \|z-A^{-1}_0
   B_0v\|^2_{H_1}+\|G_0z\|^2_U \m=\m \|A_0z-B_0G_0z\|^2_{H} +
   \|G_0z\|^2_U. \]
Since $A_0-B_0 G_0=L_0$, we obtain \rfb{fusion}. 
\end{remark}

{\color{blue}
\begin{corollary} \label{WellposedBSC}
Assume that $u\in\Hscr^1((0,\infty);U)$ and the initial state
$\sbm{z_0\\ w_0}\in H_\half\times H_\half$ satisfy \vspace{-5mm}
\[ A_0z_0+\half B_0 C_0 w_0 + B_0\Nscr_0 \left( C_0 w_0\right) - 
   B_0u(0) \in H, \quad y(0) = -C_0w_0+u(0), \]
where $\Nscr_0:U\rarrow U$ is a nonlinear monotone hemi-continuous
operator with $\Dscr(\Nscr_0)=U$. Then \rfb{BDamp} and \rfb{OutputBD}
have a unique classical solution $(x,u,y)$ such that $x(0)=\sbm{z(0)\\
\dot z(0)}=\sbm{z_0\vspace{1.5mm}\\ w_0}$. The classical solutions of \rfb{BDamp}
and \rfb{OutputBD} satisfy that $z(t)\in Z_0$ and \rfb{BSC} holds for
all $t\geq 0$.

Moreover, these classical solutions have all the properties stated in
Theorem {\rm\ref{Main}}, in particular the continuity properties
\rfb{Well-posed} and the energy balance inequality \rfb{energy_bal_M}.
Hence, \rfb{BDamp} and \rfb{OutputBD} determine an incrementally
scattering passive well-posed system $\Sigma^\Nscr$ with the state
space $X=H_\half\times H$, and with the input and output space $U$.
\end{corollary}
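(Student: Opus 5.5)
The plan is to reduce the corollary to Theorem \ref{Main} by taking $\Nscr=B_0\Nscr_0C_0$, with domain $\Dscr(\Nscr)=H_\half$, as a map from $H_\half$ to $H_{-\half}$. Four things must be checked: $\Nscr$ is single-valued and defined on all of $H_\half$ (so it is densely defined), it is monotone, and it is maximal monotone. After that, the solution properties are read off from Theorem \ref{Main} and Proposition \ref{CR7}.

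\textbf{Monotonicity.} For $w_1,w_2\in H_\half$ we have, using $C_0=B_0^*$,
\[
\langle B_0\Nscr_0(C_0w_1)-B_0\Nscr_0(C_0w_2),w_1-w_2\rangle_{-\half,\half}
=\langle \Nscr_0(C_0w_1)-\Nscr_0(C_0w_2),C_0w_1-C_0w_2\rangle_U\geq 0 .
\]
This follows directly from the monotonicity of $\Nscr_0$.

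\textbf{Maximality.} I will use Proposition \ref{Hemicont} on $Z=H_\half$, identifying $Z'=H_{-\half}$ through the duality pairing. The point to check is that $\Nscr$ is hemi-continuous from $H_\half$ to $H_{-\half}$ with the weak topology. Take $t_n\to 0$ and $w,v\in H_\half$. Then $C_0(w+t_nv)=C_0w+t_nC_0v$, so hemi-continuity of $\Nscr_0$ gives $\Nscr_0(C_0w+t_nC_0v)\to\Nscr_0(C_0w)$ weakly in $U$. Since $B_0\in\Lscr(U,H_{-\half})$ is bounded and linear, it is weakly continuous, so $B_0\Nscr_0(C_0(w+t_nv))\to B_0\Nscr_0(C_0w)$ weakly in $H_{-\half}$.

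The one subtlety is that Proposition \ref{Hemicont} is phrased with $Z'$, and $H_{-\half}$ is the dual of $H_\half$ with respect to the pivot $H$ rather than under Riesz identification. I would handle this by conjugating with the isomorphism $A_0^{-\half}$, exactly as in the proof of Theorem \ref{Main}. This turns $\tilde\Nscr=A_0^{-\half}\Nscr A_0^{-\half}$ into a monotone, hemi-continuous, everywhere defined operator on $H$, hence maximal monotone there. This is precisely the form in which maximality is used in the proof of Theorem \ref{Main}, through Rockafellar's theorem. I expect this identification of dualities to be the only delicate point, and it is routine.

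\textbf{Conclusion.} Theorem \ref{Main} now applies. Since $\Nscr$ is single-valued, the compatibility condition \rfb{Haaland} becomes exactly the first hypothesis of the corollary. The condition $y(0)=-C_0w_0+u(0)$ is just the output equation at $t=0$. Theorem \ref{Main} then yields a unique classical solution, the regularity \rfb{Well-posed}, the energy balance inequality \rfb{energy_bal_M}, and density of compatible initial data, hence incremental scattering passivity. The inclusion \rfb{Inclusion1} becomes the equation \rfb{BDamp}.

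Finally, by Remark \ref{Messi}, the quantity $A_0z(t)+\half B_0C_0\dot z(t)+B_0\Nscr_0(C_0\dot z(t))-B_0u(t)$ lies in $H$ for all $t$. Hence $A_0z(t)\in H+B_0U$, so $z(t)\in Z_0$. Since moreover $\dot z(t)\in\Dscr(\Nscr)=H_\half$ and $\ddot z(t)\in H$, part (2) of Proposition \ref{CR7} shows that \rfb{BSC} holds for all $t\geq0$.
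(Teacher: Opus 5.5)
Your proposal is correct and follows the paper's proof essentially step by step. You set $\Nscr=B_0\Nscr_0C_0$ with $\Dscr(\Nscr)=H_\half$, obtain maximal monotonicity from monotonicity plus hemi-continuity via Proposition~\ref{Hemicont}, apply Theorem~\ref{Main}, and then derive $z(t)\in Z_0$ from $\ddot z(t)\in H$ (which is what Remark~\ref{Messi} encodes) and \rfb{BSC} from part (2) of Proposition~\ref{CR7}. The conjugation by $A_0^{-\half}$ is unnecessary, since Proposition~\ref{Hemicont} applies directly with $Z=H_\half$ and $Z'=H_{-\half}$ under the pairing $\langle\cdot,\cdot\rangle_{-\half,\half}$, but it does no harm.
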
}

{\em Proof.} \m If we denote $\Nscr = B_0\Nscr_0 C_0$, then
\rfb{BDamp} becomes \rfb{Inclusion1}, with equality instead of the
inclusion (because $\Nscr$, like $\Nscr_0$, is single-valued). The
operator $\Nscr$ is maximal monotone. Indeed, it is easy to see that
$\Nscr$ is monotone and hemi-continuous from $H_\half$ to
$H_{-\half}$, so that it is maximal monotone according to Proposition
\ref{Hemicont}. Therefore, the system $\Sigma^\Nscr$ described by
\rfb{BDamp} and \rfb{OutputBD} satisfies the conditions in Theorem
\ref{Main}, hence it is well-posed and it has all the properties
stated in Theorem \ref{Main}. The fact that $z(t)\in Z_0$ for all
$t\geq 0$ is easy to see from \rfb{BDamp} and from the fact that
$\ddot z(t)\in H$ (as stated in Theorem \ref{Main}). The fact that
classical solutions satisy \rfb{BSC} for all $t\geq 0$ follows from
Proposition \ref{CR7}. $\quad \square$

%%%%%%%%%%**********%%%%%%%%%%**********%%%%%%%%%%**********%%%%%%%%%%
\section{Nonlinear damping for the wave equation on a bounded domain}
\label{sec7} % Section 7

In this section we apply our earlier results (from Sections 5 and 6)
to the wave equation with nonlinear distributed or boundary damping.
We formulate such systems that fit into our framework and then show
that (for suitable initial and boundary conditions) they have unique
classical and generalized solutions, and these satisfy the energy
balance inequality \rfb{energy_bal_M}. For the spaces and the operators,
we follow Sect.~7 of \cite{WeTu03}, which considers the linear case.
We study the wave equation on an $n$-dimensional domain $\Om$, with a
control input $u$ applied on a part $\Gamma_1$ of the boundary (called
the active boundary) and with the output $y$ observed (measured) on
the same $\Gamma_1$, while the other part of the boundary just
reflects the waves. One may regard $u$ as the incoming wave and $y$ as
the outgoing wave. The one-dimensional version of the example with
distributed cubic damping is in our conference paper
\cite{SW:CDC2022}.

Consider $H=L^2(\Om)$, where $\Om\subset\rline^n$ is open and bounded,
with Lipschitz boundary $\Gamma$. Let $\Gamma_0$ and $\Gamma_1$ be
nonempty open and disjoint subsets of $\Gamma$, such that $\Gamma_0
\cup\Gamma_1$ is dense in $\Gamma$. Assume that the boundaries of
$\Gamma_0$ and $\Gamma_1$ have surface measure zero in $\Gamma$. The
input and output space is $U=L^2(\Gamma_1)$. Assume that $b\in
L^\infty(\Gamma_1)$ (real valued) and $b(x)\neq 0$ for almost every
$x\in\Gamma_1$. The linearly damped wave equation on $\Om$, with
boundary input and boundary observation, is described by the following
set of equations: \vspace{-2mm}
\begin{equation} \label{wave}
   \left\{ \begin{array}{ll} \ddot z(x,t) \m=\m \Delta z(x,t) &
   \mbox{on }\Om\times[0,\infty),\\ z(x,t) \m=\m 0 \bluff & \mbox
   {on }\Gamma_0\times [0,\infty), \\ \frac{\partial }{\partial \nu}
   z(x,t) +b(x)^2\dot{z}(x,t) \m=\m \sqrt{2} b(x) u(x,t) \bluff & 
   \mbox{on }\Gamma_1 \times [0,\infty),\\ \frac{\partial }{\partial
   \nu} z(x,t) - b(x)^2\dot{z}(x,t) \m=\m\sqrt{2}b(x)y(x,t)\bluff
   & \mbox{on }\Gamma_1 \times [0,\infty). \end{array} \right.
\end{equation}
We normally impose initial conditions as follows: \m $z(x,0)=z_0(x),
\ \dot z(x,0)=w_0(x)$ (for $x\in\Om$) where the functions $z_0$ and 
$w_0$ are the initial states (in spaces to be specified). The term 
$b(\cdot)^2\dot{z}(\cdot,t)$ is the viscous damping acting on the 
boundary $\Gamma_1$. 

We denote by $\gamma$ the {\it Dirichlet trace operator}, so that
\vspace{-2mm}
\[ \gamma g \m=\m g|_{\Gamma} \FORALL g\in C(\overline\Om) \m. \]
It is well known that $\gamma$ can be extended to a bounded operator 
from $\Hscr^1(\Om)$ to $L^2(\Gamma)$. Denoting by $\Rscr$ the 
restriction operator mapping $L^2(\Gamma)$ onto $L^2(\Gamma_1)$, we
define \vspace{-2mm}
\[ \gamma_0 g \m=\m \Rscr \gamma g  \FORALL g\in\Hscr^1(\Om) \m. \]
Using this notation, we introduce a Hilbert space $\Hscr^1_{\Gamma_0}
(\Om)$ as follows: \vspace{-2mm}
\[ \Hscr^1_{\Gamma_0}(\Om) \m=\m \{g\in \Hscr^1(\Om)\ \ | \ \ 
   (I-\Rscr)\gamma g=0\},\quad \|g\|_{\Hscr^1} \m=\m \|\nabla 
   g\|_{(L^2(\Om))^n}\m\m. \vspace{-2mm}\]
In other words, $\Hscr^1_{\Gamma_0}(\Om)$ is the space of all those
functions in $\Hscr^1(\Om)$ that vanish on $\Gamma_0$. We also 
introduce the Neumann trace operator on $\Gamma_1$ as follows:
\vspace{-2mm}
\[ \gamma_1 f \m=\m \frac{\partial}{\partial\nu}f\big|_{\Gamma_1}
   \m=\m \langle\nabla f,\nu\rangle \FORALL f\in C^1(\overline\Om)
   \m, \] 
where $\nu$ is the unit vector in the outward normal direction to $\Gamma$. It is known (see,
for instance, \cite[Sect.~7]{WeTu03}) that $\gamma_1$ can be extended
to those $f\in\Hscr^1_{\Gamma_0}(\Om)$ for which $\Delta f\in L^2
(\Om)$, and then $\gamma_1 f$ is in a certain Sobolev space on 
$\Gamma_1$ that includes $L^2(\Gamma_1)$ densely. We introduce the
space \vspace{-1mm}
\begin{equation} \label{Z_0}
  Z_0 \m=\m \left\{f\in \Hscr^1_{\Gamma_0}(\Om) \left|\m\m \Delta 
  f\in L^2(\Om),\m\m\gamma_1 f\in bL^2(\Gamma_1)\right.\right\}, 
  \vspace{-1mm} \end{equation}
We define $A_0=-\Delta$ (in the sense of distributions), with the
domain \vspace{-1.5mm}
\begin{equation} \label{Andropov}
   \Dscr(A_0) \m=\m H_1 \m=\m \left\{ z\in Z_0\m\m \left|\ \ \gamma_1
   z=0\right. \right\} \m.\vspace{-1mm}
\end{equation}
Then $H_\half=\Dscr(A_0^{\half})=\Hscr^1_{\Gamma_0}(\Om)$. The
operator $A_0$ is positive and boundedly invertible, and its extension
mapping $H_\half$ to $H_{-\half}$ is also denoted by $A_0$. For
technical details about these spaces and the operators below we refer
to \cite[Sect.~7]{WeTu03} (see also \cite{Kurula_Zwart,Skrepek},
\cite[Sect.~3.7]{obs_book}).

We define the Neumann map \m $N\in\Lscr(U,H_\half)$ such that: $Nu=g$
if and only if $g\in\Hscr^1_{\Gamma_0}(\Om)$, $\Delta g=0$ and
$\gamma_1 g=u$. It is shown, for instance, in \cite[Sect.~7]{WeTu03}
that indeed such an operator exists and, moreover, $N^*A_0=\gamma_0$.
Clearly $\gamma_1 N=I$. The following proposition is an easy
consequence of the facts stated so far in this section.

{\color{blue} \begin{proposition} \label{BCS_operators}
Define the operators $C_0\in\Lscr(H_\half,U)$,
$B_0\in\Lscr(U,H_{-\half})$ (where $H_{-\half}$ is the dual of 
$H_\half$) and $G_0\in\Lscr(Z_0,U)$ by \vspace{-3mm}
\begin{equation} \label{Gorbachov}
   C_0 \m=\m \sqrt{2}\m b N^* A_0 \m=\m \sqrt{2} \cdot b\gamma_0 \m,
   \qquad B_0 \m=\m C_0^*=\sqrt{2}\m A_0 Nb, \qquad G_0 \m=\m \frac{1}
   {\sqrt{2}}\m b^{-1} \gamma_1 \m.\vspace{-2mm}
\end{equation}
Then $G_0 H_1=\{0\}$ and $G_0\m A_0^{-1}B_0=I$. For these operators
$A_0$ and $B_0$, the space $Z_0$ from \rfb{Matty} coincides with $Z_0$
from \rfb{Z_0}. According to Proposition {\rm\ref{CR7}} we can define
$L_0=A_0-B_0 G_0$ and then $L_0\in\Lscr(Z_0, H)$. We have that $L_0
z=-\Delta\m z$, for every $z\in Z_0$, and the norm on $Z_0$ is given by
\rfb{fusion}, i.e.,\vspace{-2mm}
\[ \|z\|^2_{Z_0} \m=\m \|\Delta\m z\|^2_{L^2(\Om)}+\half\left\| 
   \frac{1}{b}\gamma_1 z\right\|^2_{L^2(\Gamma_1)}\m.\vspace{1mm} \]
\end{proposition}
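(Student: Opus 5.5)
We verify each claim directly from the definitions in \rfb{Gorbachov}, relying on the facts recalled above: $N^*A_0=\gamma_0$, $\gamma_1N=I$, $\gamma_1 H_1=\{0\}$ by \rfb{Andropov}, and $Z_0$ as in \rfb{Z_0}.

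\textbf{The operators are well defined and mutually adjoint.} For $C_0$ we use $N^*A_0=\gamma_0$ on $H_\half$. Since $b\in L^\infty(\Gamma_1)$, we get $C_0=\sqrt2\, b\gamma_0\in\Lscr(H_\half,U)$, and its adjoint is $B_0=\sqrt2\, A_0Nb$. Here $A_0$ denotes the extension mapping $H_\half\to H_{-\half}$, and $b$ is self-adjoint as a real multiplier. For $G_0$, any $f\in Z_0$ satisfies $\gamma_1 f\in bL^2(\Gamma_1)$, so $b^{-1}\gamma_1 f\in U$. Boundedness of $G_0$ on $Z_0$ will follow once the norm on $Z_0$ is identified.

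\textbf{The two conditions of Proposition \ref{CR7}.} We have $G_0H_1=\{0\}$ because $\gamma_1$ vanishes on $H_1$. Next, $A_0^{-1}B_0=\sqrt2\,Nb$, so
\[ G_0A_0^{-1}B_0 \m=\m \tfrac{1}{\sqrt2}\, b^{-1}\gamma_1 \sqrt2\, Nb \m=\m b^{-1}b \m=\m I, \]
using $\gamma_1N=I$.

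\textbf{The two descriptions of $Z_0$ coincide.} Let $Z_0'=H_1+A_0^{-1}B_0U=H_1+N(bU)$ be the space from \rfb{Matty}.
\begin{itemize}
\item[(a)] $Z_0'\subset Z_0$: every $Nbv$ is harmonic, lies in $\Hscr^1_{\Gamma_0}$, and has $\gamma_1 Nbv=bv\in bL^2$. Elements of $H_1$ also lie in $Z_0$.
\item[(b)] $Z_0\subset Z_0'$: given $f\in Z_0$, write $\gamma_1 f=bv$ with $v\in U$ and set $f_1=f-Nbv$. Then $f_1\in\Hscr^1_{\Gamma_0}$, $\Delta f_1=\Delta f\in L^2$ and $\gamma_1f_1=0$, so $f_1\in H_1$ by \rfb{Andropov}.
\end{itemize}
Thus the sets agree. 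The $Z_0$ norm is then given by \rfb{Matty}, and $G_0$ is bounded with respect to it: in the decomposition above, $v=b^{-1}\gamma_1 f=\sqrt2\,G_0f$, and this $v$ is unique since $b\neq0$ a.e.

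Proposition \ref{CR7} now applies and gives $L_0=A_0-B_0G_0\in\Lscr(Z_0,H)$.

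\textbf{Computing $L_0$.} Decompose $z=z_1+Nbv$ as above. Then
\[ B_0G_0z \m=\m \sqrt2\, A_0Nb\cdot \tfrac{1}{\sqrt2}\, b^{-1}\gamma_1 z \m=\m A_0Nbv, \]
so $L_0z=A_0z_1=-\Delta z_1$. Since $Nbv$ is harmonic, $\Delta z_1=\Delta z$, and hence $L_0z=-\Delta z$. The only delicate point is that the extended $A_0$ agrees with $-\Delta$ on $H_1$, and this holds by the definition of $A_0$.

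\textbf{The norm formula.} This follows from Remark \ref{z_norm}: we have $\|L_0z\|^2=\|\Delta z\|^2$, and
\[ \|G_0z\|^2 \m=\m \tfrac12\left\|\tfrac1b\gamma_1z\right\|^2 . \]

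\textbf{Main obstacle.} The hardest step is the rigorous use of the extended Neumann trace in the set equality for $Z_0$. We must justify that $\gamma_1(f-Nbv)=\gamma_1 f-bv$ by linearity of the extended trace on $\{f\in\Hscr^1_{\Gamma_0}:\Delta f\in L^2\}$, and this we take from \cite[Sect.~7]{WeTu03}.
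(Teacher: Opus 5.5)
Your proof is correct and follows the route the paper intends. The paper writes out no separate proof; it calls the proposition an easy consequence of $N^*A_0=\gamma_0$, $\gamma_1N=I$, the definition of $H_1$, Proposition~\ref{CR7} and Remark~\ref{z_norm}, and you verify it from exactly these facts, using the unique decomposition $z=z_1+Nbv$ with $bv=\gamma_1 z$ to get the set equality for $Z_0$, the boundedness of $G_0$, and $L_0z=A_0z_1=-\Delta z$.
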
}

According to Proposition \ref{CR7} (the linear version), the linear
wave system \rfb{wave} can be written in the form \rfb{Inclusion1} and
\rfb{Output1}, with $\Nscr=0$ and with the operators $B_0$ and $C_0$
as defined above. We know from \cite{WeTu03} that this wave system is
well-posed and in fact, it is a {\em conservative system}, which means
that the operators $\Sigma_\tau$ from \rfb{Sig4b} are unitary. (In
particular, this implies that \rfb{wellposed_ineq} holds with equality
and with $c_\tau=1$.) We remark that the corresponding undamped wave
equation (i.e., \rfb{wave} but with the third and fourth lines
modified to $\frac{\partial}{\partial\nu}z(x,t)=b(x)u(t)$ and $b(x)
\dot{z}(x,t) |_{\Gamma_1}=y(x,t)$, respectively) is impedance passive
(see \cite{StWe12}) but not well-posed, see \cite[p.~220] {Avdonin}
for details. (The analysis in \cite{Avdonin} shows that if $\Om$ is a
rectangle in $\rline^2$, the control operator corresponding to $B_0$
is not admissible for the undamped wave equation.)

{\bf Wave equation with distributed cubic damping.} \m 
In the following proposition we investigate the effect of a nonlinear
distributed damping in addition to the linear boundary damping already
present in \rfb{wave}. We introduce the nonlinear (``cubic'') operator
$\Nscr_c$ \m defined in $H_\half$, with values in $H_{-\half}$, as
follows: \vspace{-3mm}
\begin{equation} \label{Odessa}
   \Nscr_c(w) \m=\m w^3 \FORALL w\in\Dscr(\Nscr_c) \m=\m 
   \left\{f\in\Hscr^1_{\Gamma_0}(\Om)\m|\ f^3\in \left(
   \Hscr^1_{\Gamma_0}(\Om)\right)'\right\}\m. \vspace{-2mm}
\end{equation}
Thus, if $w\in\Hscr^1_{\Gamma_0}(\Om)$, then $w\in\Dscr(\Nscr_c)$ if
and only if $w^3$ is a bounded linear functional on 
$\Hscr^1_{\Gamma_0}(\Om)=H_\half$, i.e., if there exists a $K>0$ such
that \vspace{-3mm}
\begin{equation} \label{Mexico}
   \left| \int_\Om w^3(\xi) \cdot \phi(\xi)\dd\xi \right| \m\leq\m K 
   \cdot \|\phi\|_{\Hscr^1} \FORALL \phi\in \Hscr^1_{\Gamma_0}(\Om).
\end{equation}
It is easy to see that $\Nscr_c$ is a monotone mapping from $H_\half$
to $H_{-\half}$.

{\color{blue} \begin{lemma} \label{Sobolev_embedding}
If $\Om\subset\rline^n$, where $n\leq 4$, then $\Dscr(\Nscr_c)=
\Hscr^1_{\Gamma_0}(\Om)$.
\end{lemma}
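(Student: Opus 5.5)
The inclusion $\Dscr(\Nscr_c)\subset\Hscr^1_{\Gamma_0}(\Om)$ holds by definition \rfb{Odessa}. So we only need to show that for every $w\in\Hscr^1_{\Gamma_0}(\Om)$ the function $w^3$ defines a bounded linear functional on $\Hscr^1_{\Gamma_0}(\Om)$, i.e., that \rfb{Mexico} holds with some finite $K$. The tool is the Sobolev embedding $\Hscr^1(\Om)\hookrightarrow L^4(\Om)$, valid for bounded Lipschitz domains in dimension $n\leq 4$. For $n\geq 3$ the critical exponent is $2^*=2n/(n-2)$, which is $\geq 4$ exactly when $n\leq 4$. For $n=1,2$, $\Hscr^1(\Om)$ embeds into every $L^q$ with $q<\infty$.

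First I check that the norm $\|\nabla g\|_{(L^2(\Om))^n}$ on $\Hscr^1_{\Gamma_0}(\Om)$ is equivalent to the full $\Hscr^1(\Om)$ norm. This is a Poincaré-type inequality for functions vanishing on the set $\Gamma_0$ of positive surface measure. It is implicit in the paper's statement that $\Hscr^1_{\Gamma_0}(\Om)=H_\half$ with $A_0$ boundedly invertible: $\|g\|_{L^2}\leq\|A_0^{-\half}\|\,\|g\|_\half$. Hence there is $c>0$ with
\[ \|g\|_{L^4(\Om)} \m\leq\m c\,\|g\|_{\Hscr^1} \FORALL g\in\Hscr^1_{\Gamma_0}(\Om). \]

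Next, fix $w\in\Hscr^1_{\Gamma_0}(\Om)$ and $\phi\in\Hscr^1_{\Gamma_0}(\Om)$. By Hölder's inequality with exponents $4/3$ and $4$,
\[ \left|\int_\Om w^3\phi\,\dd\xi\right| \m\leq\m \|w^3\|_{L^{4/3}}\|\phi\|_{L^4} \m=\m \|w\|_{L^4}^3\|\phi\|_{L^4} \m\leq\m c^4\|w\|_{\Hscr^1}^3\|\phi\|_{\Hscr^1}. \]
This gives \rfb{Mexico} with $K=c^4\|w\|^3_{\Hscr^1}$, so $w\in\Dscr(\Nscr_c)$. The integral is well defined because $w^3\phi\in L^1(\Om)$ by the same estimate.

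The main obstacle is justifying the embedding into $L^4$ on a merely Lipschitz domain. I would invoke the existence of a bounded extension operator $\Hscr^1(\Om)\to\Hscr^1(\rline^n)$ for Lipschitz domains (Stein/Calderón), then apply the Gagliardo–Nirenberg–Sobolev inequality on $\rline^n$, and finally use the boundedness of $\Om$ to pass from $L^{2^*}$ to $L^4$ when $n=3$. For $n=1,2$ one uses the embedding into $L^q$ for all finite $q$ (for $n=1$, even into $L^\infty$).
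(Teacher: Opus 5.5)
Your proof is correct and follows essentially the same route as the paper: a Sobolev embedding of $\Hscr^1_{\Gamma_0}(\Om)$ into $L^p(\Om)$, combined with H\"older's inequality, yields \rfb{Mexico}. The differences are cosmetic. You fix $p=4$, $\bar p=4/3$ for all $n\leq 4$, whereas the paper uses the dimension-dependent critical exponent together with the condition $p/3\geq\bar p$. You also make explicit the Poincar\'e-type norm equivalence and the extension-operator justification on Lipschitz domains, which the paper takes for granted.
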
}

{\em Proof.} \m For $n=1$ we have that $\Hscr^1(\Om)\subset
C(\bar{\Om})\subset L^p(\Om)$ for any $p\in[1,\infty]$, while for
$n=2$ we have that $\Hscr^1(\Om)\subset L^p(\Om)$ (with continuous
embedding) for any $p\in[1,\infty)$, see \cite[p.~66]{Necas} for the 
last statement. For $n\geq 3$ we can use the Sobolev embedding theorem
(see \cite[p.~63] {Necas}) to see that $\Hscr^1(\Om)\subset L^P(\Om)$
(with continuous embedding), where $\frac{1}{p}=\half-\frac{1}{n}$. 
For $n=3$ we get $p=6$, for $n=4$ we get $p=4$, for $n=5$ we get 
$p=10/3$ while for $n\geq 6$ we get $p\leq 3$. Of course, the 
continuous embedding $\Hscr^1(\Om)\subset L^p(\Om)$ means that there
exists a $K_e>0$ such that \vspace{-3mm}
\begin{equation} \label{Mattia}
   \|\phi\|_p \m\leq\m K_e \|\phi\|_{\Hscr^1} \FORALL \phi\in
   \Hscr^1_{\Gamma_0}(\Om) \m.\vspace{-2mm}
\end{equation}

According to H\"{o}lder's inequality, for any $w\in\Hscr^1(\Om)$,
\vspace{-1.5mm}
\begin{equation} \label{Sebastien}
   |\langle w^3, \phi\rangle| \m\leq\m \|w^3\|_{\bar p}
   \|\phi\|_p \quad \FORALL \phi\in\Hscr^1(\Om),\vspace{-1mm}
\end{equation}
where $\frac{1}{p}+\frac{1}{\bar{p}}=1$, as long as $w^3\in 
L^{\bar{p}}(\Om)$. We see that this is the case for every $w\in
\Hscr^1(\Om)$ if $p/3\geq\bar{p}$, which is the case for $1\leq n\leq
4$, but not for $n\geq 5$. Thus, for $1\leq n\leq 4$, \rfb{Sebastien}
holds for any $w\in\Hscr^1(\Om)$. This, together with \rfb{Mattia}, 
implies \rfb{Mexico}, so that $\Dscr(\Nscr_c)=
\Hscr^1_{\Gamma_0}(\Om)$. $\quad\square$

\smallskip

The following proposition is in the same spirit as Proposition
\ref{CR7}. It shows that the equations \rfb{Inclusion1}, \rfb{Output1}
and also the compatibility condition \rfb{Haaland} can be rewritten,
in the case of the wave equation with cubic damping, in a form that is
closer to a PDE with boundary control and boundary observation. This
system fits into the framework of Theorem \ref{Main} if $n\leq 4$,
because then $\Nscr_c$ is maximal monotone. In Proposition
\ref{Airmax} we shall see that for $n\leq 3$, the equations of the 
system can be written in an even simpler form.  

{\color{blue} \begin{proposition} \label{wave_nl_pro} Recall the
operators $A_0$, $B_0$, $C_0$ and $\Nscr_c$ introduced between
{\rm\rfb{Andropov}} and {\rm\rfb{Odessa}}. If $u\in\Hscr^1
((0,\infty);U)$ and $\sbm{z_0\\ w_0}\in H_\half\times\Dscr(\Nscr_c)$,
then the compatibility condition \rfb{Haaland} (with $\Nscr_c$ in
place of $\Nscr$) takes the form \vspace{-2mm}
\begin{equation} \label{Ben}
   \left(A_0 z_0+\half B_0 C_0 w_0 + \Nscr_c(w_0) - B_0 u(0)\right) 
   \in H . \vspace{-2mm}
\end{equation}
This is equivalent to \vspace{-2mm}
\begin{equation} \label{Biden}
   z_0 + A_0^{-1}\Nscr_c(w_0)\in Z_0\quad \&\quad G_0 \left[ z_0 + 
   A_0^{-1}\Nscr_c(w_0) \right] + \half C_0 w_0 
   \m=\m u(0) \m.\vspace{-0.5mm}
\end{equation}
This condition is further equivalent to \vspace{-1mm}
\begin{equation} \label{Hila}
   z_0 + A_0^{-1}\Nscr_c(w_0) \in Z_0\quad\&\quad \gamma_1
   \left[ z_0 + A_0^{-1}\Nscr_c(w_0)\right] + b^2 \gamma_0 w_0 \m=\m
   \sqrt{2} bu(0) \m.\vspace{-2mm}
\end{equation}

If $\dot z(t)\in H_\half$, $z(t)+A_0^{-1}\Nscr_c(\dot z(t))
\in Z_0$, and $\ddot z(t)\in H$, then the equations\vspace{-2mm}
\begin{equation} \label{Halmos}
   \left\{ \begin{array}{l} \ddot z(t) \m=\m -L_0 \left[ z(t) +
   A_0^{-1} \Nscr_c(\dot{z}(t)) \right], \\ G_0 \left[ z(t) + A_0^{-1}
   \Nscr_c(\dot z(t)) \right] + \half C_0 \dot{z}(t) \m=\m u(t),\bluff
   \\ G_0 \left[ z(t) + A_0^{-1}\Nscr_c (\dot z(t)) \right] - \half
   C_0 \dot{z}(t) \m=\m y(t),\bluff \end{array} \right.\hspace{-2mm}\m 
   \vspace{-2mm}
\end{equation}
are equivalent to \rfb{Inclusion1} and \rfb{Output1}, with $=$ in
place of $\in$ and with $\Nscr_c$ in place of $\Nscr$.

If the dimension $n\leq 4$, then $\Nscr_c$ is maximal monotone. Hence,
if the data $z_0,w_0,u$ satisfy the equivalent compatibility
conditions \rfb{Ben}-\rfb{Hila}, then the set of equations
\rfb{Halmos} has a unique classical solution $(\sbm{z\\ \dot z},u,y)$
such that $z(0)=z_0$, $\dot{z}(0)=w_0$. This classical solution has
all the properties listed in Theorem {\rm\ref{Main}}, in particular,
\rfb{Well-posed}. Moreover, the equations \rfb{Halmos} determine an
incrementally scattering passive well-posed system \m $\Sigma^\Nscr$.
\end{proposition}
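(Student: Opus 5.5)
The plan is to reduce everything to Proposition~\ref{CR7}-type manipulations with $G_0$ and $L_0$ (whose properties for the wave operators are given by Proposition~\ref{BCS_operators}), and then to invoke Theorem~\ref{Main}.

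First, since $\Nscr_c$ is single-valued, the set in \rfb{Haaland} is a singleton. The nonempty-intersection condition therefore reduces to membership in $H$, which is \rfb{Ben}.

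For \rfb{Ben}$\Leftrightarrow$\rfb{Biden}, set $\zeta=z_0+A_0^{-1}\Nscr_c(w_0)\in H_\half$; this makes sense because $\Nscr_c(w_0)\in H_{-\half}$. Then \rfb{Ben} reads $A_0\zeta+\half B_0C_0w_0-B_0u(0)\in H$, so $A_0\zeta\in H+B_0U$ and hence $\zeta\in H_1+A_0^{-1}B_0U=Z_0$. Applying $G_0A_0^{-1}$ and using $G_0H_1=\{0\}$ and $G_0A_0^{-1}B_0=I$ gives the boundary identity in \rfb{Biden}, exactly as in part (1) of Proposition~\ref{CR7}. Conversely, if $\zeta\in Z_0$ satisfies the identity, apply $B_0$ to it and write $A_0\zeta=L_0\zeta+B_0G_0\zeta$. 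The expression in \rfb{Ben} then becomes $L_0\zeta\in H$, as in \rfb{Schengen}.

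For \rfb{Biden}$\Leftrightarrow$\rfb{Hila}, multiply the identity by $\sqrt2\,b$ and substitute $G_0=\frac1{\sqrt2}b^{-1}\gamma_1$ and $C_0=\sqrt2\,b\gamma_0$ from \rfb{Gorbachov}. This works because $b\neq0$ a.e., so multiplication by $b$ is injective on $L^2(\Gamma_1)$.

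For the equivalence of \rfb{Halmos} with \rfb{Inclusion1}, \rfb{Output1}, repeat part (2) of Proposition~\ref{CR7} with $\zeta(t)=z(t)+A_0^{-1}\Nscr_c(\dot z(t))\in Z_0$ in place of $z(t)$. The equation \rfb{Inclusion1} can be written as $\ddot z=-A_0\zeta-\half B_0C_0\dot z+B_0u$. Since $\ddot z\in H$, the argument above gives $A_0\zeta+\half B_0C_0\dot z-B_0u=L_0\zeta$, which is the first line of \rfb{Halmos}. Applying $G_0A_0^{-1}$ to the equation kills $\ddot z$ and $A_0^{-1}L_0\zeta\in H_1$, which yields the second line. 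The third line is obtained by combining the second with \rfb{Output1}. The reverse implications follow by running the same steps backwards.

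Finally, for $n\le4$, Lemma~\ref{Sobolev_embedding} shows that $\Dscr(\Nscr_c)=H_\half$. I expect maximal monotonicity to be the main obstacle, and I would handle it through Proposition~\ref{Hemicont}, which needs $\Nscr_c$ monotone and hemi-continuous on all of $H_\half$.

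\emph{Monotonicity} follows pointwise from $(a^3-b^3)(a-b)\ge0$.

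\emph{Hemi-continuity} needs more care. For $w,v\in H_\half$, the expression $(w+t_nv)^3-w^3$ expands into terms such as $t_nw^2v$, each bounded in $L^{\bar p}$ by Hölder's inequality and the embedding \rfb{Mattia}. Hence $\Nscr_c(w+t_nv)\to\Nscr_c(w)$ in norm in $H_{-\half}$, which is stronger than the weak convergence required.

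Full domain also gives density of $\Dscr(\Nscr_c)$ trivially. Theorem~\ref{Main} then yields existence and uniqueness of the classical solution with all the properties listed there, including \rfb{Well-posed}, and shows that the system is incrementally scattering passive and well-posed. Through the equivalences above, this classical solution solves \rfb{Halmos}.
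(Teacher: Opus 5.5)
Your proposal is correct and takes essentially the same route as the paper. It applies Proposition~\ref{CR7}-style manipulations (applying $G_0A_0^{-1}$ and writing $A_0=L_0+B_0G_0$) to $\zeta=z_0+A_0^{-1}\Nscr_c(w_0)$, and then uses Lemma~\ref{Sobolev_embedding}, Proposition~\ref{Hemicont} and Theorem~\ref{Main} for $n\le4$. Your explicit H\"older-based proof of hemi-continuity, which the paper only asserts, and your use of injectivity of multiplication by $b$ in place of the paper's application of the possibly unbounded $b^{-1}$ are refinements rather than a change of strategy.
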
}

{\em Proof.} \m Recall the space $Z_0$ from \rfb{Matty} and \rfb{Z_0},
the operator $G_0$ from \rfb{Gorbachov}, and the fact that $G_0 
A_0^{-1}B_0=I$. If the condition \rfb{Ben} holds, then it is easy to
see that $z_0+A_0^{-1}\Nscr_c(w_0)\in Z_0$. Applying $G_0 A_0^{-1}$ on
both sides of \rfb{Ben}, and using that $G_0 H_1=\{0\}$, we obtain
\rfb{Biden}. Recalling the definition of $G_0, C_0$ and multiplying \rfb{Biden}
with $\sqrt{2}b$, we obtain \rfb{Hila}. 

Conversely, if \rfb{Hila} holds, then on applying $B_0 b^{-1}$ on 
both sides, we obtain \vspace{-2mm}
\[ B_0 G_0 \left[ z_0 +A_0^{-1}\Nscr_c(w_0) \right] + \half B_0 C_0 
   w_0 \m=\m B_0 u(0) \m.\vspace{-2mm}\] 
From the definition of $L_0$ we have that $B_0 G_0=A_0-L_0$, thus 
\vspace{-2mm}
\[ A_0 z_0 + \Nscr_c(w_0) + \half B_0 C_0 w_0 - B_0 u(0) \m=\m L_0 
   \left[ z_0+A_0^{-1}\Nscr_c(w_0)\right] \m\in\m H \m,\vspace{-2mm}
\]
which clearly implies \rfb{Ben}. 

To show that \rfb{Inclusion1} and
\rfb{Output1} imply \rfb{Halmos}, first we rewrite \rfb{Inclusion1}:
\vspace{-2mm}
\begin{equation} \label{Schengen_bis}
\ddot{z}(t) + A_0 \left[ z(t) + A_0^{-1} \Nscr_c(\dot{z}(t))\right]
   + \half B_0C_0\dot{z}(t) \m=\m B_0 u(t) \FORALL t\geq 0 \m.
   \vspace{-2mm}
\end{equation}
If we apply $G_0 A_0^{-1}$ to both sides and use that $\ddot z(t)\in 
H$, $G_0H_1=\{0\}$, we get : \vspace{-2mm}
\[ G_0 \left[ z(t) + A_0^{-1} \Nscr_c(\dot{z}(t)) \right] + \half G_0
   A_0^{-1}B_0 C_0\dot{z}(t) \m=\m G_0 A_0^{-1}B_0 u(t) \FORALL 
   t\geq 0 \m. \vspace{-2mm}\]
Using that $G_0 A_0^{-1}B_0=I$, we obtain the second equation in 
\rfb{Halmos}.

From \rfb{Schengen_bis}, using that $A_0=L_0+B_0 G_0$ and using the
second equation in \rfb{Halmos}, we get the first equation in
\rfb{Halmos}. Combining this with \rfb{Output1}, we get the third
equation in \rfb{Halmos}.

To show that \rfb{Halmos} implies \rfb{Inclusion1}, substitute $L_0=
A_0-B_0G_0$ in the first equation in \rfb{Halmos} to obtain
\vspace{-2mm} 
\[ \ddot{z}(t) \m=\m -A_0 z(t)-\Nscr_c(\dot{z}(t)) + B_0 G_0 \left[
   z(t) + A_0^{-1}\Nscr_c(\dot{z}(t))\right]. \]
Substituting the second equation in \rfb{Halmos} in the above
equation, we get \rfb{Inclusion1} (with an equality). Finally,
subtracting the second equation in \rfb{Halmos} from the third, we get
\rfb{Output1}.

So far, we have allowed any $n\in\nline$. If $n\leq 4$, then due to
Lemma \ref{Sobolev_embedding} we have $\Dscr(\Nscr_c)=H_\half$, so
that $\Nscr_c$ is an everywhere defined monotone and hemi-continuous
operator. According to Proposition \ref{Hemicont}, $\Nscr_c$ is maximal
monotone from $H_\half$ to $H_{-\half}$. Thus, we can apply Theorem
\ref{Main} to get the remaining statements in this
proposition. $\quad\square$

\smallskip

\begin{remark} \label{wave_generalized}
Using the same argument as in Remark \ref{Solutions}, we obtain that
for $n\leq 4$, for any input $u\in\Uscr$ and any initial state 
$\sbm{z_0 & w_0}^\top\in H_\half\times H$, the equation 
\rfb{Halmos} has a unique generalized solution $(\sbm{z\\ \dot z},u,
y)$. This satisfies $z\in C^1_r([0,\infty);H)\cap C([0,\infty);
H_\half)$ and \rfb{energy_bal_M}.
\end{remark}

\begin{remark}\label{n>4}
We comment on the case $n\geq 5$. In this case, $\Dscr(\Nscr_c)$ is
strictly included in $\Hscr_{\Gamma_0}^1(\Om)$ and we do not know if
$\Nscr_c$ is maximal monotone. Like any monotone operator, $\Nscr_c$
has maximal monotone extensions. One way to construct a maximal
monotone extension $\Nscr$, is indicated in \cite[p.~224]{Barbu}. We
can define the functional
\[ \varphi(w) \m=\m \frac{1}{4}\int_\Om w^4(\xi)\dd \xi\quad \FORALL 
   w\in \Hscr_{\Gamma_0}^1(\Om),\]
which may also be $+\infty$, and then $\Nscr$ is the subdifferential
of $\varphi$. It is not clear to us if this gives a genuine extension
of the cubic map $\Nscr_c$, or perhaps $\Nscr=\Nscr_c\m$ ?
\end{remark}

{\color{blue} \begin{proposition} \label{Airmax}
For $n\leq 3$, if $u\in\Hscr^1((0,\infty);U)$, $z(t),\dot{z}(t)\in
H_\half$ and the compatibility condition \rfb{Ben} (equivalently,
\rfb{Biden} or \rfb{Hila}) holds for $z_0=z(t)$ and $w_0=\dot{z}(t)$,
then the system of equations \rfb{Halmos} can be rewritten 
equivalently as follows\m{\rm :} \vspace{-1mm}
\begin{equation} \label{wave_nl}
   \left\{ \begin{array}{ll} \ddot z(x,t) \m=\m (\Delta z)(x,t) - 
   (\dot{z}(x,t))^3 \qquad  & \mbox{on }\Om\times
   [0,\infty),\\ z(x,t) \m=\m 0
   \bluff & \mbox {on }\Gamma_0\times [0,\infty), \\ \frac{\partial}
   {\partial \nu} \left[ z(\cdot,t) + A_0^{-1}\dot z(\cdot,t)^3
   \right](x) + b(x)^2 \dot{z}(x,t) \m=\m \sqrt{2} b(x) u(x,t)\bluff
   & \mbox{on }\Gamma_1 \times [0,\infty),\\ \frac{\partial}{\partial 
   \nu} \left[z(\cdot,t) + A_0^{-1}\dot z(\cdot,t)^3 \right](x) - 
   b(x)^2 \dot{z}(x,t) \m=\m \sqrt{2} b(x) y(x,t)\bluff & \mbox{on}\ 
   \Gamma_1 \times [0,\infty). \end{array} \right. \hspace{-2mm}\m 
\end{equation}
\end{proposition}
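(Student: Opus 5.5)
The plan is to translate each line of \rfb{Halmos} into the form \rfb{wave_nl}, using the concrete operators of Proposition \ref{BCS_operators}: $L_0 = -\Delta$ on $Z_0$, $G_0 = \frac{1}{\sqrt2} b^{-1}\gamma_1$ and $C_0 = \sqrt2\, b\gamma_0$. The role of the restriction $n\le 3$ is that $w\in\Hscr^1_{\Gamma_0}(\Om)$ then implies $w^3\in L^2(\Om)=H$. Indeed, by the Sobolev embedding used in the proof of Lemma \ref{Sobolev_embedding}, $\Hscr^1(\Om)\subset L^6(\Om)$ for $n\le 3$ (and $L^p$ for all finite $p$ when $n\le 2$), so $\|w^3\|_{L^2}=\|w\|_{L^6}^3<\infty$. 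Hence $\Nscr_c(\dot z(t))=\dot z(t)^3\in H$, which gives two facts:
\begin{itemize}
\item $A_0^{-1}\Nscr_c(\dot z(t))\in\Dscr(A_0)=H_1\subset Z_0$;
\item $-\Delta A_0^{-1}\dot z(t)^3=\dot z(t)^3$ in the distributional sense on $\Om$.
\end{itemize}

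First I would handle the first equation of \rfb{Halmos}. The compatibility condition (\rfb{Biden}, at time $t$) gives $z(t)+A_0^{-1}\dot z(t)^3\in Z_0$. Since $A_0^{-1}\dot z(t)^3\in H_1\subset Z_0$, it follows that $z(t)\in Z_0$. Linearity of $L_0$ on $Z_0$ then gives
\[ -L_0\left[z(t)+A_0^{-1}\dot z(t)^3\right] = \Delta z(t) - L_0A_0^{-1}\dot z(t)^3. \]
Because $G_0H_1=\{0\}$, we have $L_0=A_0-B_0G_0=A_0$ on $H_1$, so $L_0A_0^{-1}\dot z(t)^3=\dot z(t)^3$. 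The first equation of \rfb{Halmos} therefore becomes $\ddot z=\Delta z-\dot z^3$ in $L^2(\Om)$, which is the first line of \rfb{wave_nl}. The Dirichlet condition on $\Gamma_0$ holds automatically, since $z(t)\in H_\half=\Hscr^1_{\Gamma_0}(\Om)$.

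Second, I would treat the boundary lines. I multiply the second and third equations of \rfb{Halmos} by $\sqrt2\,b$ and substitute $G_0=\frac1{\sqrt2}b^{-1}\gamma_1$ and $\half C_0\dot z=\frac{1}{\sqrt2}b\gamma_0\dot z$. This gives
\[ \gamma_1\left[z+A_0^{-1}\dot z^3\right]\pm b^2\gamma_0\dot z=\sqrt2\,b\,u \quad(\text{resp. } \sqrt2\,b\,y), \]
exactly as in the passage from \rfb{Biden} to \rfb{Hila}. These are the third and fourth lines of \rfb{wave_nl}, with $\partial/\partial\nu$ interpreted as $\gamma_1$. The step is reversible: since $b\neq0$ a.e., one can multiply by $b^{-1}/\sqrt2$. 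The first-line equivalence is likewise reversible because all terms lie in $H$.

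I expect the main obstacle to be the precise meaning of the identity $\frac{\partial}{\partial\nu}$ on $\Gamma_1$ in \rfb{wave_nl}: it should be read as $\gamma_1$ in its extended sense on elements of $Z_0$. One point needs checking: $\gamma_1 A_0^{-1}\dot z^3=0$ because $A_0^{-1}\dot z^3\in H_1$ (cf. \rfb{Andropov}). If the statement is instead meant with the combined term kept, no splitting is needed and only the membership in $Z_0$ matters. I would keep the combined form as written, so that no issue arises about $\gamma_1 z(t)$ alone lying in $bL^2(\Gamma_1)$. That property nonetheless follows, since $z(t)\in Z_0$ by the argument above.
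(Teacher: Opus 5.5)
Your proposal is correct and follows the paper's proof essentially step by step. For $n\le 3$ you get $\dot z(t)^3\in L^2(\Om)$, hence $A_0^{-1}\dot z(t)^3\in H_1\subset Z_0$, and then $z(t)\in Z_0$ by \rfb{Biden}; next $L_0A_0^{-1}\dot z(t)^3=\dot z(t)^3$ because $G_0H_1=\{0\}$, and the boundary lines follow on multiplying by $\sqrt2\,b$. Your side remark that $\gamma_1A_0^{-1}\dot z(t)^3=0$ (since $H_1=\{z\in Z_0\,|\,\gamma_1 z=0\}$) is also correct: for $n\le 3$ the term $A_0^{-1}\dot z(t)^3$ could be dropped from the third and fourth lines of \rfb{wave_nl}, a simplification the paper does not state.
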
}

In the above proposition (as in \rfb{wave}) we have written $\frac
{\partial}{\partial\nu}$ in place of $\gamma_1$, to make the formula 
more intuitive, and we have written $z(\cdot,t)$ in place of $z(t)$.
The second equation in \rfb{wave_nl} is not really needed if it is 
known that $z(t),\m\dot z(t)\in H_\half=\Hscr^1_{\Gamma_0}(\Om)$, but
we have written it in keeping with the PDE tradition that all the 
boundary conditions should be written explicitly.

{\em Proof.} \m For $n\leq 3$, it follows from Lemma
\ref{Sobolev_embedding} that $\Dscr(\Nscr_c)=\Hscr^1_{\Gamma_0}(\Om)
\subset L^6(\Om)$. Thus, for any $\dot{z}(t)\in \Dscr(\Nscr_c)$ we
obtain that $\Nscr_c(\dot{z}(t))\in L^2(\Omega)=H$. Using the fact
that $A_0$ is a boundedly invertible operator on $H$, we get that
$A_0^{-1}\Nscr_c(\dot{z}(t))\in \Dscr(A_0)=H_1\subset Z_0$. By the
compatibility condition \rfb{Biden}, we have that $z(t)+A_0^{-1}
\Nscr_c(\dot{z}(t))\in Z_0$, which implies that $z(t)\in Z_0$.
Therefore, the first line of \rfb{Halmos} can be rewritten as follows:
\vspace{-1mm}
\[ \ddot{z}(t) \m=\m -L_0 z(t)-L_0 A_0^{-1} \Nscr_c(\dot{z}(t)) .
   \vspace{-2mm} \]
On substituting $L_0=A_0-B_0 G_0$ in the second term on the 
right-hand side, we get \vspace{-1mm}
\[ \ddot{z}(t) \m=\m -L_0 z(t)-\left(A_0-B_0 G_0\right)A_0^{-1} 
   \Nscr_c(\dot{z}(t))=-L_0 z(t)-\Nscr_c(\dot{z}(t)), \vspace{-2mm}\]
because $A_0^{-1} \Nscr_c(\dot{z}(t))\in H_1$ and $G_0 H_1=\{0\}$. On
the space $Z_0$, we have $L_0=-\Delta$ and for $\dot z(t)\in H_\half$
we have $\Nscr_c(\dot{z}(t))=(\dot{z}(t))^3$. Thus, the first line of  
\rfb{Halmos} becomes the first line of \rfb{wave_nl}. Using the 
definition of $G_0$ and $C_0$ (as given in \rfb{Gorbachov}), the 
third and the fourth line of \rfb{wave_nl} can be easily derived
from the second and the third line of \rfb{Halmos}, respectively.
$\quad \square$

\smallskip

{\bf Wave equation with monotone boundary damping.} \m Next we
consider the wave equation with a nonlinear damping term acting on the
boundary $\Gamma_1$. Our wave equation system (a nonlinear
modification of \rfb{wave}) is\m:
\begin{equation} \label{waveBC}
   \resizebox{1.0\hsize}{!}{$\left\{ \begin{array}{ll} \ddot z(x,t) 
   \m=\m \Delta z(x,t) & \mbox{on }\Om\times[0,\infty),\\ z(x,t) \m=\m
   0 \bluff & \mbox{on }\Gamma_0\times [0,\infty), \\ \frac{\partial}
   {\partial \nu} z(x,t) + b(x)^2\dot{z}(x,t)+\sqrt{2}b(x)
   \Nscr_0\left(\sqrt{2} b(x)\dot{z}(x,t)\right) \m=\m \sqrt{2} b(x) 
   u(x,t) \bluff & \mbox{on }\Gamma_1 \times [0,\infty),\\ \frac
   {\partial }{\partial \nu} z(x,t) - b(x)^2\dot{z}(x,t) + \sqrt{2}b
   (x)\Nscr_0\left(\sqrt{2} b(x)\dot{z}(x,t)\right) \m=\m
   \sqrt{2}b(x)y(x,t)\bluff & \mbox{on }\Gamma_1 \times [0,\infty),\\
   z(x,0) \m=\m z_0(x),\ \ \dot z(x,0) \m=\m w_0(x) \bluff & 
   \mbox{on } \Om. \end{array} \right.$}\vspace{-2mm}
\end{equation}
In \rfb{waveBC}, the nonlinear boundary operator $\Nscr_0:\Dscr
(\Nscr_0)=L^2(\Gamma_1)\rarrow L^2(\Gamma_1)$ is the pointwise
application of a monotone Lipschitz continuous function $\beta:\rline
\rarrow \rline$, followed by multiplication with $\alpha$, where
$\alpha\in L^\infty(\Gamma_1)$, $\alpha(x)\geq 0$\m:
\[ \left(\Nscr_0(u)\right)(x) \m=\m \alpha(x)\beta\left(u(x)\right)
   \quad \mbox{ on}\ \ x\in\Gamma_1.\vspace{-1mm} \]

Recall the space $Z_0$ from \rfb{Z_0}. We see from Proposition
\ref{BCS_operators} that all the assumptions of Proposition \ref{CR7}
are satisfied and $ L_0 z = -\Delta z$ for all $z\in Z_0$. Therefore,
the nonlinear system \rfb{waveBC} fits into the abstract framework of
the systems with boundary control and boundary observation described
by \rfb{BSC}. By invoking Corollary \ref{WellposedBSC}, we obtain that
the equations \rfb{waveBC} determine an incrementally scattering
passive well-posed system. The classical solutions of this system
satisfy $z\in C^1_r([0,\infty); \Hscr^1_{\Gamma_0}(\Om))$ and
$\dot{z}\in C^1_r([0,\infty);L^2(\Om))$.

%%%%%%%%%%**********%%%%%%%%%%**********%%%%%%%%%%**********%%%%%%%%%%
\bibliographystyle{plain}        % Include this if you use bibtex 
%\bibliography{bibfile}  

\end{document}

%% file: ex_shared.tex
\usepackage{lipsum}
\usepackage{amsfonts}
\usepackage{graphicx}
\usepackage{epstopdf}
\usepackage{algorithmic}
\ifpdf
  \DeclareGraphicsExtensions{.eps,.pdf,.png,.jpg}
\else
  \DeclareGraphicsExtensions{.eps}
\fi

\newsiamremark{remark}{Remark}
\newsiamremark{hypothesis}{Hypothesis}
\crefname{hypothesis}{Hypothesis}{Hypotheses}
\newsiamthm{claim}{Claim}

\headers{Second order systems on Hilbert spaces with nonlinear 
  damping}{S. Singh and G. Weiss}

\title{Second order systems on Hilbert spaces \\
with nonlinear damping
\thanks{This research has been funded by the Israel Science 
Foundation under grant no. 3621/21.}}
\author{Shantanu Singh and George Weiss\thanks{School of Electrical 
  Engineering, Tel Aviv University, Ramat Aviv 69978, Israel. 
  \email{shantanu@tauex.tau.ac.il},
  \email{gweiss@tauex.tau.ac.il}.
}}%\footnotemark[3]

\usepackage{amsopn}